\documentclass[11pt]{article}

\usepackage{scalerel}
\usepackage{subcaption}
\usepackage{adjustbox}

\usepackage{graphicx}

\usepackage[a4paper]{geometry}
\usepackage{rotating}
\usepackage{pdflscape}
\usepackage[T1]{fontenc}
\usepackage{tabularx}
\usepackage{amsfonts}
\usepackage{amsmath}
\usepackage{mathabx}
\usepackage{hyperref}
\usepackage{xcolor}
\usepackage{amssymb}
\usepackage{color}
\usepackage{tikz}
\usepackage{mathrsfs}
\usepackage{stmaryrd}
\usepackage{bussproofs}
\usepackage[numbers]{natbib}
\usepackage{url}
\usepackage{mathtools}

\newcommand{\curc}{\mathord{\circ}}
\newcommand{\dsarrow}{\mathord{\text{\rotatebox[origin=c]{270}{$\rightsquigarrow$}}}}

\usepackage{amsthm}

\theoremstyle{plain}
\newtheorem{definition}{Definition}[section]

\newtheorem{lemma}[definition]{Lemma}
\newtheorem{proposition}{Proposition}
\newtheorem{corollary}[definition]{Corollary}
\newtheorem{fact}[definition]{Fact}

\theoremstyle{definition}
\newtheorem{remark}[definition]{Remark}

\numberwithin{equation}{section}

\makeatletter

\makeatother

\newcommand{\B}[1]{\mathbf{#1}}
\newcommand{\C}[1]{\mathcal{#1}}
\newcommand{\D}[1]{\mathscr{#1}}
\newcommand{\OV}[1]{\mathbin{\overline{#1}}}

\newcommand{\NI}{\mathtt{NI}}
\newcommand{\Nd}{\mathtt{NI}^2}

\newcommand{\at}[1]{\mathsf{at}(#1)}

\newcommand{\ho}[1]{\{ #1\}}

\newcommand{\Np}{\mathtt{NI}^p}
\newcommand{\Ndp}{\mathtt{NI}^{2p}}

\newcommand{\Nb}{\mathtt{NI}^\bullet}
\newcommand{\Ndb}{\mathtt{NI}^{2\bullet}}

\newcommand{\Nv}{\mathtt{NI}^{\!\vee}}
\newcommand{\Ndv}{\mathtt{NI}^{2\vee}}

\newcommand{\Nda}{\mathtt{NI}^2_{at}}

\newcommand{\Ndba}{\mathtt{NI}^{2\bullet}_{at}}
\newcommand{\Ndva}{\mathtt{NI}^{2\vee}_{at}}
\newcommand{\Ndpa}{\mathtt{NI}^{2p}_{at}}

\newcommand{\Ndrp}{\mathtt{NI}^2_{RP}}

\newcommand{\Ndrpc}{\mathtt{NI}^2_{\curc}}

\newcommand{\Ldrp}{\mathcal{L}^2_{RP}}

\newcommand{\Ld}{\mathcal{L}^2}

\newcommand{\Lp}{\mathcal{L}^p}
\newcommand{\Lb}{\mathcal{L}^\bullet}
\newcommand{\Lv}{\mathcal{L}^\vee}

\newcommand{\Ldv}{\mathcal{L}^{2\vee}}
\newcommand{\Ldb}{\mathcal{L}^{2\bullet}}
\newcommand{\Ldp}{\mathcal{L}^{2p}}

\newcommand{\lost}[3]{\left\langle{#1}\right\rangle
} 
\newcommand{\lust}[3]{\left\langle{#1}\right\rangle_{{#2}\in{#3}}}

\newcommand{\EXP}[3]{\langle{#3}\rangle_{#2}\! \supset\! {#1}} 

\newcommand{\EXPO}[3]{\langle{#3}\rangle\! \supset
  \! {#1}}

\usepackage{varwidth}
\newsavebox{\mypti}
\newsavebox{\mypto}
\newsavebox{\mypta}
\newsavebox{\myptu}
\newsavebox{\mypte}

\usepackage{tikz}
\usetikzlibrary{positioning,shapes,shapes.multipart}
\usetikzlibrary{cd,arrows,decorations,backgrounds}

\tikzset{
  symbol/.style={
    draw=none,
    every to/.append style={
      edge node={node [sloped, allow upside down, auto=false]{$#1$}}}
  }
}

\newcommand{\bb}[1]{{#1}}
\newcommand{\rr}[1]{{#1}}
\newcommand{\TT}[1]{\mathtt{#1}}

\newcommand{\imp}{\mathbin{\supset}}

\newcommand{\rop}{*}

\newcommand*\SF[3]{%
\begin{tikzpicture}[baseline=(C.base)]
    \node[rectangle split, rectangle split horizontal, rectangle split parts=2, rectangle split part fill={black!10,black!10} , inner sep=1pt,
](C) {${#2}$ \nodepart{two}  \colorbox
{white}{$
    {#3}
    $}};
  \end{tikzpicture}
}


\newcommand*\cexpD[3]{%
\begin{tikzpicture}[baseline=(C.base)]
    \node[rectangle split, rectangle split horizontal, rectangle split parts=2, rectangle split part fill={black!10,black!10} , inner sep=1pt
    ](C) {${#2}$ \nodepart{two}   \colorbox{white}{$
    {#3}
    $}};
  \end{tikzpicture}
}

\newcommand*\Fun[3]{%
\begin{tikzpicture}[baseline=(C.base)]
    \node[rectangle split, rectangle split horizontal, rectangle split parts=2, rectangle split part fill={black!10,black!10} , inner sep=1pt,
](C) {${#2}^{\scaleto{#1}{3.5pt}}$ \nodepart{two}  \colorbox
{white}{$
    {#3}
    $}};
  \end{tikzpicture}
}


\usepackage{authblk}

\title{The naturality of natural deduction (II)\\ \Large Some remarks on atomic polymorphism}
\author[1]{Paolo Pistone}
\author[2]{Luca Tranchini}
\author[3]{Mattia Petrolo}
\affil[1]{\small\emph{Dipartimento di Informatica-Scienza e Ingegneria, Universit\`a di Bologna}}
\affil[2]{\small\emph{Wilhelm-Schickard-Institut, Universit\"at T\"ubingen}}
\affil[3]{\small\emph{Centro de Ci\^encias Naturais e Humanas, Universidade Federal do ABC}}

\date{}

\begin{document}

\EnableBpAbbreviations

\maketitle

\begin{abstract}
In a previous paper (of which this is a prosecution) we investigated the extraction of proof-theoretic properties of natural deduction derivations from their  impredicative translation into System F. Our key idea was to introduce an extended equational theory for System F codifying at a syntactic level some properties found in parametric models.

In a recent series of papers a different approach to extract proof-theoretic properties of natural deduction derivations was proposed by defining predicative variants of the usual translation, embedding intuitionistic propositional logic into the atomic fragment of System F.

In this paper we show that this approach  finds a general explanation within our equational study of second-order natural deduction, and a clear semantic justification provided by parametricity.

\end{abstract}

\paragraph{Keywords} second-order logic, propositional quantification, identity of proofs,  Russell-Prawitz translation, atomic polymorphism, naturality condition, instantiation overflow.

\section{Introduction}\label{intro}

%
%
 %
%
%

Russell was the first to observe that propositional connectives like disjunction and conjunction can be defined using only implication and propositional quantification and 
%
in his monograph on
natural deduction, Prawitz showed how the natural deduction system for intuitionistic
propositional logic (henceforth $\NI$) can be embedded into the implicational fragment of
second-order propositional intuitionistic logic (also known as {\em System F}, and here referred to as $\Nd$). We will refer to this embedding  as the Russell-Prawitz translation
(shortly RP-translation).


Taking inspiration from this embedding, in recent work  in proof-theoretic semantics (see for instance in \cite{OSxx} and \cite{Ferreira2013}) $\Nd$ has been suggested as a suitable setting to investigate the proof theory of propositional connectives. This way of looking at $\Nd$ faces however two  kinds of difficulties.


%
%

\paragraph{The equivalence-preservation problem}

In proof-theoretic semantics, one is not only concerned with \emph{provability}---i.e.~with whether there is a derivation of a given formula in a certain system---but also with \emph{identity of proofs}---i.e.~with whether  
two distinct derivations of the same formula can be viewed as different syntactic representations of \emph{the same} proof (understood as an abstract object).

 
 A common way to characterize identity of proofs is 
  by declaring two derivations \emph{equivalent}  when they converge, under the usual conversions used for normalization, to the same normal derivation. Equivalent derivations are then taken to represent the same proof. 
  This intuition is made precise by the categorical semantics for natural deduction systems. For instance
  $\NI$ can be interpreted in any \emph{bi-cartesian closed category}, 
 with equivalent derivations being mapped onto the same morphism.

%
%
%
%

If not only provability but also identity of proofs is considered, then 
the RP-translation might not seem entirely satisfactory
as  equivalent  derivations in $\NI$ need not translate into equivalent derivations in $\Nd$. Although the translation works for the equivalence induced by  $\beta$-conversions only, it fails for the one induced by $\eta$-conversions and permutations, here referred to as $\gamma$-conversions. 
In categorical terms, 
the RP-translation of, say, a disjunction, is not interpreted as a {co-product} in every categorical model of $\Nd$, but only as a ``weak'' variant of it. 
We will refer to this fact as the  \emph{equivalence-preservation} problem of the RP-translation.

%

%
%


In a  previous paper, of which the present one is a follow-up, we explored a solution to the equivalence-preservation problem based on the  fact that the RP-translation of conjunctions and disjunctions \emph{does} yield categorical products/co-products in the class of \emph{parametric models} of $\Nd$  \cite{Reynolds1983,Bainbridge1990, Girard1992}. 
With the goal of making this result accessible to the proof theory community at large, in \cite{StudiaLogica} we provided a purely syntactic reconstruction of it: we described an equational theory extending the one arising from the usual  $\beta$- and $\eta$-conversions   for $\Nd$-derivations using a new class of conversions---that we called $\varepsilon$-conversions---expressing a {\em naturality} condition for $\Nd$-derivations that holds in all {\em parametric} models of $\Nd$, and we showed that the RP-translation does preserve the full 
equivalence of $\NI$-derivations as soon as $\Nd$-derivations are considered under this stronger equivalence.\footnote{In the functorial semantics of System F \citep{Bainbridge1990,Girard1992}, $\Nd$-derivations are actually interpreted as {\em di}natural transformation. The reason to focus on naturality, rather than on the more general notion of dinaturality, is briefly discussed in Section~\ref{conc-section}.}

%
%
%

\paragraph{Impredicative vs predicative translations}%

 A second difficulty is of a foundational nature and stems from the fact that the RP-translation and, more generally, the second-order encoding of \emph{inductive types} (e.g. the types of natural numbers and well-founded trees) inside $\Nd$ are \emph{impredicative}. In fact, the embedding of $\NI$ into $\Nd$
%
  requires the full power of second-order quantification: in the elimination rule for the second-order quantifier $\forall$E:
\begingroup\makeatletter\def\f@size{10}\check@mathfonts
$$
\AXC{$\forall X.A $}
\RL{$\forall\text{E}$}
\UIC{$A\llbracket B/X\rrbracket$}
\DP
$$\endgroup
no restriction can be imposed on the choice of the formula $B$ (called the {\em witness} of the rule application).

A solution to this problem can be found in a recent series of papers by Fernando Ferreira and Gilda Ferreira, who proposed a variant of the RP-translation (to which we will refer to as FF-translation) which encodes $\NI$ in \emph{atomic} System F (here referred to as $\Nda$), a weak predicative fragment of $\Nd$ in which the  witnesses of $\forall$E are required to  be atomic formulas. A further refinement of the FF-translation was later proposed by Jos\'e Esp\'irito Santo and Gilda Ferreira in \cite{ESF19} (we will refer to it as the ESF-translation).

Besides being predicative, the FF- and ESF-translations have another significant advantage over the RP-translation:  
they do preserve the equivalence arising  
not only from $\beta$-conversions, but also from 
$\eta$- and $\gamma$-conversions
  for disjunction and $\bot$ \cite{Ferreira2009,Ferreira2017,ESF19}.\footnote{Actually, preservation of $\eta$-conversions fails for conjunction.} 
 For these reasons the predicative translations  were advocated in \cite{Ferreira2013} 
 as evidence in favor of taking $\Nd$ and its fragments as a convenient framework to investigate propositional connectives.

\paragraph{From impredicative to atomic polymorphism through $\varepsilon$-conversions}
%




In this paper we investigate the predicative translations into System F$_{at}$ using the
 equational framework we developed in our previous paper, and we show that  
 the syntactic results on atomic polymorphism can be given a semantic explanation ultimately relying on parametricity, a well-investigated semantics of (full) polymorphism.
%
%

%

%
%

Our first observation is that the results of Ferreira and co-authors do not hold only for $\vee$ and $\bot$, but for the class of connectives that are definable in $\NI$ by arbitrarily composing $\wedge$, $\vee$, $\top$ and $\bot$ (to describe such connectives we borrow another concept from the toolbox of category theory, that of a \emph{finite polynomial functor}  \cite{Kock}).

By extending the RP-translation to a natural deduction system for this class of  propositional connectives (called $\Np$), we are led to consider another fragment of $\Nd$, that we call the \emph{Russell-Prawitz fragment} ($\Ndrp$). Unlike the atomic fragment $\Nda$, the fragment $\Ndrp$ is impredicative, since no restriction is imposed on the witnesses of the applications of $\forall$E.

Nonetheless, we show that every derivation in $\Ndrp$ can be ``atomized'', i.e.~it can be mapped onto a derivation in $\Nda$ with the same conclusion and the same assumptions, by applying instances of $\eta$-expansion and the $\varepsilon$-conversion. By composing the RP-translation from $\Np$ to $\Ndrp$ with the atomization from $\Ndrp$ to $\Nda$ one thereby obtains another predicative translation from $\Np$ to $\Nda$ (we call it the \emph{$\varepsilon$-translation}) which only differs from the FF- and ESF-translation by some $\beta$-reduction steps.

%
%
%

An immediate consequence of this fact is that the RP-translation and its three predicative variants are all equivalent modulo $\beta$-, $\eta$- and $\varepsilon$-conversions. 
%
%
%
%
Thus, under the notion of identity of proofs induced by $\varepsilon$-conversions, all translations of a given propositional derivation are different syntactic descriptions of the same second-order proofs (that is, all these translations interpret an $\Np$-derivation as the same morphism 
in all parametric models of $\Nd$).

\paragraph{Comparing predicative translations and $\varepsilon$-conversions}
On the one hand, we highlight two limitations of the approach based on atomic polymorphism: first, 
 the predicative translations do not preserve the full $\eta$-rule needed to interpret disjunction as a categorical co-product, and thus
fail to provide 
a full solution to the equivalence-preservation problem.
Moreover, we show that once $\forall$E is restricted to atomic witnesses it is not possible to prove the logical equivalence between a propositional formula and its second-order translation (with the terminology of \cite{Prawitz1965}, this means that connectives are not \emph{strongly} definable, but only \emph{weakly} definable in $\Nda$).

On the other hand, we observe that while the predicative translations are well-suited for the study of proof reductions, as shown for instance by the results in \cite{ESFarx}, 
the use of $\varepsilon$-conversions comes at the price of a rather involved and still not well-understood reduction behavior.

\paragraph{Goals and plan of the paper} One of the motivations for the previous and present papers is that of making some ideas underlying the categorical semantics of System $F$ accessible to the  proof-theoretic community at large, and to show that these ideas can be fruitfully connected with strands of research arisen within more philosophically-oriented areas of proof theory. This was the reason for reformulating in the first paper categorical notions such as functors and natural transformations in the language of natural deduction, at the expenses of typographic conciseness. 

To keep the presentation compact and readable for the largest audience, we chose to present the main results of the paper using the natural deduction notation and restricting the attention only to the case of a particular ternary connective $\bullet(A,B,C)$. 
Full  proofs for the whole class of  connectives we consider are postponed to a (large) technical appendix written using the drastically more economical $\lambda$-calculus notation.

In Section~\ref{preliminaries}, we introduce the natural deduction calculus $\Np$ for the class of propositional connectives we intend to investigate and a fragment of $\Nd$, that we call the \emph{Russell-Prawitz fragment} (noted  $\Ndrp$). Both $\Np$ and $\Ndrp$ are inspired by the notion of finite polynomial functor from category theory, and we introduce a generalization of the usual RP-translation as a derivability-preserving embedding between these two systems.
In Section~\ref{sec:epsilon} we recall the framework introduced in our previous paper to describe functors and natural transformations within natural deduction, based on the $\varepsilon$-conversion. 
In Section~\ref{sec-ferris} we generalize the FF- and the ESF-translations to $\Np$  and  we investigate their relationship to the RP-translation. To do this, we  first show how the FF-translation can be analyzed as the composition of the RP-translation and of an embedding from the fragment $\Ndrp$ into  $\Nda$ that we call FF-atomization, and then defining an alternative embedding from $\Ndrp$ into  $\Nda$ atomization using the $\varepsilon$-conversions, the $\varepsilon$-atomization.
%
In Section~\ref{invertib-section} we discuss some limitations as well as some advantages of predicative translations for the study of identity of proofs and proof reduction. 
In Section~\ref{conc-section} we briefly summarize the results of the paper, we draw some connections with related work, and we suggest further directions of investigation. 
Finally, the rich appendix provides full proofs (in  $\lambda$-calculus notation)
of the results discussed or simply sketched in the main text.

\section{Polynomial  connectives and their RP-translation}\label{preliminaries}



In this section we introduce a formal framework for natural deduction which extends the one from \cite{StudiaLogica} to a more general class of propositional connectives.


%

\subsection{Polynomial connectives}\label{lansys}

As suggested  in the  previous paper (cf.~\cite{StudiaLogica} Section~4.3), the results we are concerned with are not limited to the standard intuitionistic connectives, but scale smoothly to a wider class of connectives investigated in proof-theoretic semantics  (see e.g.~\cite{Pra79, SH1984}). These  are those  connectives that can be defined by composing $\vee$, $\wedge$, $\top$ and $\bot$, such as 
the ternary connective $\bullet(A_1,A_2,A_3)$ definable as $(A_1\wedge A_2)\vee A_3$  
whose introduction and elimination rules are as follows:
%

{\small$$\AXC{$A_1$}\AXC{$A_2$}\RL{$\bullet$I$_1$}\BIC{$\bullet(A_1,A_2,A_3)$}\DP \quad\AXC{$A_3$}\RL{$\bullet$I$_2$}\UIC{$\bullet(A_1,A_2,A_3)$}\DP \qquad\qquad \AXC{$\bullet(A_1,A_2,A_3)$}\AXC{$[A_1][A_2]$}\noLine\UIC{$C$}\AXC{$[A_3]$}\noLine\UIC{$C$}\RL{$\bullet$E}\TIC{$C$}\DP$$}

%
%
\noindent In general, each such  connective,  
 is definable in $\NI$ as $\bigvee_{p=1}^n \bigwedge_{q=1}^{m_p} A_{pq}$ for some choice of $n$, $m_p$s and $A_{pq}$s, but here it will be treated as primitive.

Borrowing ideas from the theory of \emph{finite polynomial functors} \cite{Kock}, each such connective can be described as determined by 
 three finite lists $\C I,\C J, \C K$  (to be thought of as lists of indices) and two functions $f: \C J \mapsto \C I$ and $g: \C J \mapsto \C K$ 
 that we depict in a diagram as follows:\footnote{In the language of category theory this configuration describes a unary {finite polynomial functor}, which is the reason for our terminological choice.} 
$$\begin{tikzcd}
\C I  &  \C J \ar{l}[above]{f} \ar{r}{g}  & \C K
\end{tikzcd}$$
%
%
%
%
%
%
%
%
%
%
%
%
%
%
Any such diagram determines  what we will call a {\em polynomial connective}  to be indicated with $\dagger^{(f,g)}$, or simply $\dagger$ when $f,g$ are clear from the context, in the following way:

\begin{itemize}
\item The length $|\C I|$ of $\C I$ measures the arity of $\dagger$, so that  when $\dagger$ is applied to an $\C I$-indexed list of formulas $\langle A_{i}\rangle_{i\in \C I}$\footnote{$\left\langle{A_i}\right\rangle_{\scalerel*{{i}\in{\C I}}{X}}$
abbreviates the sequence of formulas $A_{i_1} \ldots A_{i_n}$ when  $\C I=\langle i_1, \ldots, i_n \rangle$.
} one obtains a new formula $\dagger \langle A_{i}\rangle_{i\in \C I}$.
\item The length $|\C K|$ of $\C K$ is the number  of distinct introduction rules of $\dagger$.
  \item Any element  $k$ in $\C K$ determines a sublist of $\C J$, namely the list of all $j\in \C J$ such that $g(j)=k$, that we indicate with $g^{-1}(k)$ and whose length $|g^{-1}(k)|$ is the number of premises of the $k$-th introduction rule of $\dagger$.\footnote{\label{indfam}Thus $\C J$ can be seen as a family of lists indexed by the elements of $\C K$.}
  \end{itemize}

  Using the  functions $f$ and $g$ we can  describe the introduction and elimination rules for $\dagger$ as follows. Given an $\C I$-indexed list of formulas $\langle A_{i}\rangle_{i\in \C I}$, the  $k$-th introduction rule $\dagger$I$_{k}$ for $\dagger$, allows us to infer  $\dagger \langle A_{i}\rangle_{i\in \C I}$ from the list of premises  $\langle A_{f(j)}\rangle_{j\in g^{-1}(k)}$.
Given $\dagger \langle A_{i}\rangle_{i\in \C I}$ and a $\C K$-indexed list of derivations of an arbitrary formula $C$ from (respectively) the premises of the $k$-th introduction rule  $\langle A_{f(j)}\rangle_{j\in g^{-1}(k)}$, we can infer $C$ thereby discharging in the $k$-th derivation of $C$ the assumptions  $\langle A_{f(j)}\rangle_{j\in g^{-1}(k)}$. 
%
%
%
%
%
We depict the rules as follows:

\begin{lrbox}{\mypti}
  \raisebox{2ex}[-0ex][0ex]{
\begin{varwidth}{\linewidth}
  $\left\langle\AXC{$
      \left[\langle A_{f(j)}\rangle_{j\in{g}^{-1}(k)}\right]$}\noLine\UIC{$C$}\DP\right\rangle_{k\in\C K}$
\end{varwidth}}
\end{lrbox}

$$\centerAlignProof\left\langle\AXC{$\phantom{_{j\in{g^{-1}}(k)}}\lust{A_{f(j)}}{j}{{g^{-1}}(k)}$}\RL{$\dagger
$I$_k$}\UIC{$\dagger
\lust {A_i}{i}{\C I}$}\DP\right\rangle_{k\in \C K} \qquad\qquad \centerAlignProof\AXC{$\dagger
\lost{A_i}{i}{\C I}$}\AXC{\usebox{\mypti}}\RL{$\dagger
$E}\BIC{$C$}\DP$$

\noindent

\begin{remark}\label{ex:fpfsum}
  Let $\B 0$ be the empty list, and $\B 1$, $\B 2$, $\B 3$, $\B 4$, $\B 5$ be the lists $\langle 1\rangle$, $\langle 1,2\rangle$, $\langle 1,2,3\rangle$, $\langle 1,2,3,4\rangle$, $\langle 1,2,3,4,5\rangle$ respectively. 
The connective  $\bullet(A_1,A_2,A_3)= (A_{1}\land A_{2})\lor A_{3}$ is shorthand for  $\dagger^{(I_{\B 3},g)}\langle A_1,A_2,A_3\rangle$,   given by $\B 3 \stackrel{I_{\B 3}}{\leftarrow} \B 3 \stackrel{g}{\to} \B 2$, where  $I_{\C A}$ indicates the identity function on the list  $\C A$ and $g:\{1,2\mapsto 1; 3\mapsto 2 \}$. Similarly, the usual connectives $\vee$, $\wedge$, $\top$, $\bot$ are obtained through the configurations 
$
\B 2 \stackrel{I_{\B 2}}{\leftarrow} \B 2 \stackrel{I_{\B 2}}{\to}\B 2
$, 
$\B 2 \stackrel{I_{\B 2}}{\leftarrow}  \B 2 \stackrel{1}{\to} \B 1$,  $\B 0 \stackrel{\emptyset}{\leftarrow} \B 0 \stackrel{\emptyset}{\to} \B 1$ and $\B 0 \stackrel{\emptyset}{\leftarrow} \B 0 \stackrel{\emptyset}{\to} \B 0$, respectively   (where $\emptyset$ indicates the  empty function and $1$ the constant function with value $1$.). We observe that some arguments of a connective may play a ``dummy'' role or may be used more than once, such as in the connective $\blacktriangle(A_1,A_2,A_3,A_4,A_5)= (A_2\wedge A_3)\vee (A_4 \wedge A_3)$ given by $\B 5 \stackrel{f}{\leftarrow} \B 4 \stackrel{g}{\to} \B 2$, where $f:\{1\mapsto 2; 2,4\mapsto 3; 3\mapsto 4 \}$ and $g:\{1,2\mapsto 1; 3,4\mapsto 2 \}$. 

\end{remark}

\begin{remark}\label{gencon1}
  Treating $\vee$ and $\bot$  as polynomial connectives (see Remark \ref{ex:fpfsum}) yields their usual  introduction and elimination rules. 
  This is not the case for $\land$ (and $\top$). When treated as a polynomial connective, conjunction has a unique elimination rule $\wedge$E$_{p}$ instead of the usual $\wedge$E$_1$ and $\wedge$E$_2$ (see also Remark~\ref{gencon}  below): 

{\small $$\AXC{$A\wedge B$}\AXC{$[A][B]$}\noLine\UIC{$C$}\RL{$\wedge$E$_p$}\BIC{$C$}\DP
 %
%
  \qquad\qquad\AXC{$A\wedge B$}\RL{$\wedge$E$_1$}\UIC{$A$}\DP \qquad \qquad \AXC{$A\wedge B$}\RL{$\wedge$E$_2$}\UIC{$B$}\DP$$}
  \end{remark}

\begin{remark}\label{index-sets}We will adopt the convention of using $i$ for indices in $\C I$, $k$ for indices in $\C K$ and  $j$ for indices in ${g}^{-1}(k)$. Hence, to enhance readability, we will often omit the indication of the index set, so that  for instance $\left\langle{A_i}\right\rangle_{\scalerel*{{i}\in{\C I}}{X}}$ will be abbreviated as  $\lost{A_i}{i}{\C I}$. 
%
%
%
\end{remark}


Given a set of propositional variables $\mathcal{V}$, indicated as $X$, $Y$, $Z$, \dots, the formulas of the language $\Ldp$  will be constructed using implication,  the universal quantifier and the family of polynomial connectives $\dagger^{f,g}$
. Besides $\Ldp$, we will mainly be concerned with two restrictions thereof, $\Lp$ and $\Ld$:
\begingroup\makeatletter\def\f@size{10}\check@mathfonts
$$
\begin{array}{rcl}
    \Ldp &::=& X \ | \ A\supset B\ |\ \dagger^{(f,g)}\langle A_i\rangle_{i\in \C I} \ | \ 
             \forall X. A\\
\Lp &::=& X \ | \ A\supset B\ | \ \dagger^{(f,g)}\langle A_i\rangle_{i\in \C I} 
\\
\Ld &::=& X \ | \ A\supset B\ | \ 
\forall X. A\\
\end{array}
$$
\endgroup
We moreover indicate with $\Lv$ (respectively $\Lb$) the restriction of $\Lp$ in which $\vee$ (resp.~$\bullet$) is the only connective besides $\imp$, and similarly for $\Ldv$ and  $\Ldb$.

The natural deduction system $\Ndp$ over the language $\Ldp$ is obtained by adding to the rules of the standard system $\Nd$ for $\Ld$ (recalled in Definition 1.1 and Table~1 on pages 197--198 of \cite{StudiaLogica}),
 all introduction and elimination rules for the connectives $\dagger^{(f,g)}$. The system $\Ndp$, along with its equational theory, is described in detail in $\lambda$-notation in Appendix~\ref{systemsappendix}. Finally, by restricting $\Ndp$ to the languages $\Lp, \Lb$, \ldots we obtain the subsystems $\Np, \Nb$, \ldots
%
%
%
%
%

\begin{remark}\label{gencon}
Observe that the  standard intuitionistic natural deduction system  $\NI$ is \emph{not} a fragment of $\Np$ since the elimination rule for conjunction in the latter system is in general form (see Remark~\ref{gencon1}). Although replacing  $\wedge$E$_1$ and $\wedge$E$_2$  with $\wedge$E$_{p}$  does not alter derivability, the two forms of elimination rules behave differently with respect to identity of proofs.
  More on this below in Section~\ref{invertib-section}.
 \end{remark}



The system
$\Nda$ (as well as $\Ndpa$, $\Ndba$, $\Ndva$)
is obtained by replacing the rule $\forall$E with the following:
\begingroup\makeatletter\def\f@size{10}\check@mathfonts
$$\AXC{$\forall X.A $}
\RL{$\forall\text{E}_{at}$}
\UIC{$A\llbracket Y/X\rrbracket$}
\DP
$$
\endgroup
in which the witness must be an atomic formula.

We indicate  derivability in $\Np, \Nd, \ldots$ with $\vdash_{\Np}, \vdash_{\Nd}, \ldots$


In the following we will often indicate an arbitrary formula $A\in\Ld$ as:

\begingroup\makeatletter\def\f@size{10}\check@mathfonts
\begin{equation*}\tag{$*$}\label{Fformula}\forall \langle Y_{1}\rangle(F_{1}\supset \forall \langle Y_{2}\rangle(F_{2}\supset \dots \supset \forall \langle Y_{n}\rangle(F_{n}\supset \forall \langle Y_{n+1}\rangle(X))\dots ))
\end{equation*}
\endgroup
where $\forall\langle Y_{i}\rangle$ (for $1\leq i\leq n+1$) indicates a list of consecutive quantifications.
Moreover, we let $\at{A}=X$ indicate the rightmost atom of $A$. Finally, we will often abbreviate $A_1\imp (\ldots \imp (A_{n-1}\imp A_n)\ldots)$ with $A_1\imp \ldots\imp A_{n-1}\imp  A_n$.

\subsection{Polynomial formulas and the RP-fragment of $\Nd$}\label{rp-trans}

\begin{table}
\rule{\textwidth}{.5pt}
\begin{align}
(A\land B)^{*} & =\forall X. (A^{*}\supset B^{*}\supset X)\supset X\\
(A\lor B)^{*}& =\forall X. (A^{*}\supset X)\supset (B^{*}\supset X)\supset X\\
\bot^{*}& = \forall X.X\\ 
\top^{*}&= \forall X.X\supset X
\end{align}
\rule{\textwidth}{.5pt}
\caption{RP-translation of standard connectives.}
\label{tabrp}
\end{table}

The RP-translation of standard connectives $\land,\lor, \bot, \top$ is recalled in Table \ref{tabrp}.
Although the connective $\bullet$ can be translated by composing the translations of $\land$ and $\lor$, 
a natural and  more economical way to encode $\bullet$ in $\Nd$ is given by

$$\bullet(A, B,C)^*= \forall X. (A^*\imp (B^*\supset X)) \supset (C^*\supset X) \supset X$$

The universal formula above shares a common structure with those in Table \ref{tabrp}: all such formulas are of the form $\forall X.A_{1}\supset \dots \supset A_{n}\supset X$, where the $A_{i}$ have a unique occurrence of $X$ in rightmost position. This suggests the following definition:


\begin{definition}
  A formula $A \in \Ld$ is \emph{strongly positive in $X$} (short \emph{sp-$X$}) 
 when it is of the form 
  $A_{1}\supset \dots \supset A_{n}\supset X$ for some $n\in \mathbb N$ and $X$ does not occur in any of the  $A_{i}$.
  
  A formula $A\in \Ld$ is \emph{polynomial in $X$} when it is of the form 
  $A_{1}\supset \dots \supset A_{n}\supset X$ for some $n\in \mathbb N$ and all $A_{i}$ are sp-$X$.

  A formula $\forall X.A$ is called \emph{universal polynomial} if $A$ is polynomial in $X$.
  
%
\end{definition}

\begin{remark}
We introduce the following compact notation for $\Nd$-formulas. Given a finite list $\C A=\langle a_{1},\dots, a_{k}\rangle$, an $\C A$-indexed list of formulas $A_{a_{1}},\dots, A_{a_{k}}$ and a formula $B$, we let 
\begin{center}
$\langle A_{a}\rangle_{a\in \C A}  \supset B$ 
\end{center}
\noindent be shorthand for the formula
$A_{a_{1}}\supset \dots \supset A_{a_{k}}\supset B$ if $\C A$ is non-empty, otherwise for $B$. When $\C A$ is clear from the context the index ${a\in \C A}$ will be omitted so we simply  write $\langle A_{a}\rangle \supset B$.
\end{remark}

A universal  polynomial formula $A$ can thus be written as 
$\forall X.\langle B_{b}\rangle_{b\in \C B}\supset X$, where $\C B$ is some list and the $B_{b}$s are sp-$X$. Moreover, any sp-$X$ formula $B$ can in turn  be written as $\langle A_{a}\rangle_{a\in \C A}\supset X$, for some list $\C A$ such that each  $A_{a}$ has no occurrence of $X$. Hence, a universal polynomial formula can be written as $\forall X.\langle \langle A_{a}\rangle_{a\in \C A_b}\supset X\rangle_{b\in \C B}\supset X$ for some list $\C B$ and family of lists $\C A_b$ indexed by $\C B$. Since a diagram $\C I \stackrel{f}{\leftarrow} \C J\stackrel{g}{\to} \C K$ describes a family of lists indexed by the elements of a list (see footnote~\ref{indfam} above), it can be used to associate to each  $\C I$-indexed family of formulas $\langle A_i\rangle_{i\in \C I}$ the universal polynomial formula $\forall X. \EXP{X}{ k\in \C K   }{ \EXP{X}{j\in {g}^{-1}(k)  }{A_{f(j)}^{\phantom{fj}*}}}$, which in turn we propose to take as the RP-translation of the propositional formula $\dagger^{f,g}\langle A_i\rangle_{i\in \C I}$.

The Russell-Prawitz translation can thus be generalized to the whole of  $\Ldp$ 
by translating the formulas of $\Ldp$ whose outermost connective is polynomial with a universal polynomial formula of $\Ld$:

\begin{definition}[RP-translation of formulas]\label{def:rp1}
We define a translation $^{*}$ from formulas of $\C L^{2p}$ to formulas of $\C L^{2}$ as follows:
$$
X^{*}= X \qquad  (A\supset B)^{*}=A^{*}\supset B^{*} \qquad  (\forall X.A )^{*}=\forall X.A ^{*}
$$
$$\left(\dagger\lust{A_i}{i}{\C I}\right)^*=  \forall X. \EXP{X}{ k\in \C K   }{ \EXP{X}{j\in {g}^{-1}(k)  }{A_{f(j)}^{\phantom{fj}*}}} \quad \text{(for $X$ not free in any $A_i$)}
$$

\noindent
where $\dagger$ is determined by $\C I \stackrel{f}{\leftarrow} \C J\stackrel{g}{\to} \C K$
\end{definition}

\begin{remark}
  For readability, we will  abbreviate $(A\supset X)\supset ((B\supset X)\supset X)$ as $A\curlyvee B$ (and thus   $(A\vee B)^*$  as  $\forall X. (A^*\curlyvee B^*)$), and similarly $(A\imp (B\supset X) \supset (C\supset X) \supset X$ as $\curc(A,B,C)$ (and thus $\bullet(A, B,C)^*$ as $\forall X.\curc(A^*,B^*,C^*)$).
\end{remark}

The RP-translation scales well from formulas to derivations, yielding an embedding $^{*}:\Ndp \mapsto \Nd$. The embedding for usual connectives is recalled  in
(\cite{StudiaLogica} Def.~2.2). Its extension to $\Np$ 
 is  defined in detail in Appendix~\ref{rp-appendix} in $\lambda$-calculus notation.

\begin{remark}
If $\D D$ is an $\Ndb$-derivation of $A$ from undischarged assumptions $A_1,\ldots, A_n$, then $\D D^*$ is an $\Nd$-derivation of $A^*$ from undischarged assumptions $A_1^*, \ldots, A_n^*$. (For a proof-sketch for the whole $\Ndp$, see Appendix~\ref{rp-appendix}).
\end{remark}

If we restrict the language of $\Nd$ by requiring every universal formula to be polynomial, then we obtain a fragment of $\Nd$ that we will call the \emph{Russell-Prawitz fragment}.

\begin{definition}[Russell-Prawitz fragment]
We let  $\Ldrp$ be the subset of $\Ld$ in which all universal formulas are polynomial, and 
 we let  $\Ndrp$, called the \emph{Russell-Prawitz fragment of $\Nd$}, be the fragment of $\Nd$ obtained by restricting to $\Ldrp$-formulas.
The system $\Ndrpc$ is the subsystem of $\Ndrp$ in which universal formulas $\forall X.A $ are all of the form $\forall X.\curc(A_{1},A_{2},A_{3})$.

\end{definition}

\begin{remark}
  The RP-fragment of $\Ld$ can be equivalently defined inductively as follows:
  $$    \Ldrp \ ::= \ X \ | \ A\supset B\ |\ \ \forall X. \EXP{X}{ k\in \C K   }{ \EXP{X}{j\in {g}^{-1}(k)  }{A_{f(j)}}} 
             $$
             \noindent where in the last clause we assume $\lost{A_i}{i}{\C I}$ to be an $I$-indexed family of  $\Ldrp$-formulas and $\C I \stackrel{f}{\leftarrow}\C J \stackrel{g}{\to}\C K$ to be the diagram determining a polynomial connective. A quantified formula of $\Ldrp$ may therefore have  quantified formulas as proper subformulas, provided they are in turn universal polynomial formulas.
\end{remark}

\begin{remark}
It is clear that the restriction of the RP-translation to $\C L^{p}$ is into $\Ldrp$, and that if $\D D$ is an $\Nb$-derivation then  $\D D^*$ is an $\Ndrpc$-derivation. 

\end{remark}

\begin{remark}
 It is easy to check that the equational theory of $\Ndrp$ is well-defined, since $\Ldrp$ is closed under substitution. 
\end{remark}

%
%
%

\begin{remark}
  The system $\Ndrp$ is the smallest fragment of $\Nd$ closed under $\supset$ and containing  the RP-translation of all polynomial connectives.

  Observe that  not only the translation of a formula $\dag^{f,g}\langle A_{i}\rangle$ is a universal polynomial formula, but for every universal polynomial formula  $A$ there is at least (but in general more than) one configuration $\C I \stackrel{f}{\leftarrow} \C J \stackrel{g}{\to} \C K$ such that $A=(\dag^{f,g}\langle A_{i}\rangle)^*$. To wit, let  $A= \forall X.A_{1}\supset \dots \supset A_{n}\supset X$, where $A_{i}=A_{i1}\supset \dots \supset A_{ik_{i}}\supset X$. It is enough to take  
 $\C I=\C J= \langle \langle 1, 1\rangle, \dots, \langle 1,k_{1}\rangle, \dots, \langle n,1\rangle, \dots, \langle n, k_{n}\rangle\rangle$ (so that $A_{\langle i,j\rangle}=A_{ij}$),
 $\C K=\langle 1,\dots,n\rangle$, $f$ to be the identity function and $g=\pi_{1}^{2}:  \langle i, j\rangle\mapsto i$.



 Given this 
 it is easily seen by induction that for any $A\in\Ldrp$ there is at least (but in general more than) one $B\in\C L^{p}$ such that $B^{*}=A$. Note however that not every $\Ndrp$-derivation is the image of some $\Np$-derivation under the RP-translation.
\end{remark}


\begin{remark}\label{rem:spX}
In \cite{StudiaLogica} we alluded to the fact that the translation of polynomial connectives can be described through a class of formulas called \emph{nested sp-$X$}. There we used ``sp-$X$'' as short for ``strictly positive'' rather than ``strongly positive'', where strictly positive formulas are a slight generalization of strongly positive formulas in which $X$ may not occur at all, and nested sp-$X$ formulas roughly stand to universal polynomial as strictly positive formulas stand to strongly positive formulas. 

The class of polynomial formulas we consider here is thus a proper subset of the class of \emph{nested sp-$X$} formulas and it is already sufficient for the goal of RP-translating polynomial connectives. However, most of the results we prove for $\Ndrp$ can be extended to a similar (and slightly larger) system defined using nested sp-$X$ (more generally these systems are fragments of a more general system $\mathsf{\Lambda2}^{\kappa\leq 2}$ we are currently investigating, see \cite{PTCSL}).

\end{remark}

%
%
%
%


\section{The $\varepsilon$-conversions}\label{sec:epsilon}
In this section we introduce some notational conventions and shortly recall some notions and results from the previous paper. These are based on the introduction of a notation for functors in natural deduction and of a class of conversions, called the $\varepsilon$-conversions, that express a naturality condition for natural deduction derivations. 

\subsection{Weak expansion}\label{weakexp-sec}


\begin{definition}\label{weakexp}
Given a second formula $A$ (whose structure can be describe as in  (\ref{Fformula}),  see page~\pageref{Fformula} above), the {\em  weak expansion  of $A$} is the derivation that one obtains  by repeatedly applying $\eta$-expansions to the derivation consisting only of the assumption of $A$ until  the minimal formula of the main branch is atomic, i.e.:

\begingroup\makeatletter\def\f@size{9}\check@mathfonts
$$\AXC{$A$}\doubleLine\RightLabel{$\forall \text{E}$}\UIC{$F_{1}\supset \forall \langle Y_{2}\rangle(F_{2}\supset \dots \supset \forall \langle Y_{n}\rangle(F_{n}\supset\forall \langle Y_{n+1}\rangle ( X))\dots )$}
\AxiomC{$\stackrel{m_1}{F_{1}}$}
\RightLabel{${\supset}\text{E}$}
\BinaryInfC{$\forall \langle Y_{2}\rangle(F_{2}\supset \dots \supset \forall \langle Y_{n}\rangle(F_{n}\supset\forall \langle Y_{n+1}\rangle (X))\dots )$}
\noLine
\UIC{$\ddots$}
\noLine
\UIC{$\ $}
\noLine
\UIC{$\forall \langle Y_{n}\rangle(F_{n}\supset \forall \langle Y_{n+1} \rangle(X))$}\doubleLine\RightLabel{$\forall\text{E}$}\UIC{$F_{n}\supset \forall \langle Y_{n+1} \rangle(X)$}
\AXC{$\stackrel{m_n}{F_{n}}$}
\RightLabel{${\supset}\text{E}$}
\BIC{$\forall \langle Y_{n+1}\rangle (X)$}
\doubleLine
\RightLabel{${\forall}\text{E}$}
\UIC{$X$}
\RightLabel{${\forall}\text{I}$}
\doubleLine
\UIC{$\forall \langle Y_{n+1}\rangle (X)$}
\RightLabel{${\supset}\text{I}$ $(m_n)$}\UIC{$F_n\supset \forall \langle Y_{n+1} \rangle(X)$}\doubleLine\RightLabel{$\forall\text{I}$}\UIC{$\forall \langle Y_n\rangle (F_n\supset\forall \langle Y_{n+1}\rangle ( X))$}\noLine\UIC{$\vdots$}\RightLabel{${\supset}\text{I}$ $(m_1)$}\UIC{$F_{1}\supset \forall \langle Y_{2}\rangle(F_{2}\supset \dots \supset \forall \langle Y_{n}\rangle(F_{n}\supset \forall \langle Y_{n+1} \rangle(X))\dots )$}\doubleLine\RightLabel{$\forall\text{I}$ }\UIC{$A$}
\DisplayProof$$
\endgroup
where {\small$\AXC{$\forall \langle Y_i\rangle G$}\doubleLine\RightLabel{$\forall \text{E}$}\UIC{$G$}\DP$} and {\small$\AXC{$G$}\doubleLine\RightLabel{$\forall \text{I}$}\UIC{$\forall \langle Y_i \rangle G$}\DP$} indicate (possibly empty) sequences of applications of $\forall\text{E}$ and $\forall\text{I}$.

\end{definition}

We will indicate by $\rr{\TT{El}}
^{\vec m}_A
$ 
the ``first half'' of the derivation above, consisting of a chain of elimination rules depending on indices
%
 $\vec m=m_{1},\dots, m_{n}$ for the undischarged assumptions $\stackrel{m_{1}}{F_{1}},\dots, \stackrel{m_{n}}{F_{n}}$.
 Similarly, we  will indicate by $
 \rr{\TT{In}}(\vec m)$ 
 the ``second half'' of the derivation above, consisting of a chain of introduction rules discharging the assumptions with indices $\vec m$ (a more rigorous description of this construction is provided in Appendix \ref{cexp-appendix} through the notion of \emph{expansion pair}).


\begin{remark}The weak expansion  of $A$ might differ from the expanded normal form (also known as $\eta$-long normal form)  of the derivation consisting only of the assumption of $A$, since in the latter not only the minimal formula in the main branch, but the minimal formulas of all branches are atomic, see \citet[\S II.3.2.2.]{Prawitz1971}.
\end{remark}


\subsection{$C$-expansion}

\begin{definition}\label{cexp}
  If $C=C_1\imp\ldots\imp C_n\imp X$ is sp-$X$ and $\mathscr{D}$ is a derivation of $B$ from undischarged assumptions $A,\Delta$ (where $A$ is freely chosen among  the undischarged assumptions of $\D D$), the {\em $C$-expansion of $\mathscr{D}$ relative to $X$ on main assumption $A$}, notation $\cexpD{X}{C^{\scaleto{X}{4pt}}}{\AXC{$A$}\noLine\UIC{$\mathscr{D}$}\noLine\UIC{$B$}\DP}$, is
  the derivation  of $C\ldbrack B/X\rdbrack$ from $C\ldbrack A/X\rdbrack, \Delta$ defined by induction on $n$ as follows: 
\begin{itemize}
\item If $n=0$, then $C=X$ and $\cexpD{X}{C^{\scaleto{X}{4pt}}}{\AXC{$A$}\noLine\UIC{$\mathscr{D}$}\noLine\UIC{$B$}\DP}$ is just $\D D$.
\item If $n\geq 1$ then $C=C_1\imp D$ where $X\not\in FV(C_1)$ and $D$ is sp-X. We define:

\begingroup\makeatletter\def\f@size{10}\check@mathfonts
\begin{lrbox}{\mypti}
\begin{varwidth}{\linewidth}
$\cexpD{X}{D^{\scaleto{X}{4pt}}}{\AXC{$A$}\noLine\UIC{$\mathscr{D}$}\noLine\UIC{$B$}\DP}$
\end{varwidth}
\end{lrbox}
$$\cexpD{X}{C^{\scaleto{X}{4pt}}}{\AXC{$A$}\noLine\UIC{$\mathscr{D}$}\noLine\UIC{$B$}\DP} = \def\defaultHypSeparation{\hskip .45em}\AxiomC{$C_1\imp D\llbracket A/X\rrbracket$}
\AxiomC{$\stackrel{n}{C_1}$}
\RightLabel{\footnotesize$\supset$E}
\BinaryInfC{\usebox{\mypti}}
\RightLabel{\footnotesize$\supset$I $(n)$}
\UIC{$C_1\imp D\llbracket B/X\rrbracket$}
\DisplayProof$$
%
 \endgroup	
 \end{itemize}
 \end{definition}


\begin{remark}\label{strongpx}


  In \citep[sec.~3.2]{StudiaLogica} we defined the notion of $C$-expansion for a broader class of formulas called pn-$X$ formulas. The restriction of the definition to the class of sp-$X$ formulas allows a straightforward reformulation of the definition using the notion of weak expansion that we  give in Appendix~\ref{cexp-appendix}.
\end{remark}

\begin{remark}
  In the functorial semantics of $\Nd$, what we call the $C$-expansion of a derivation $\D D$ is  just the result of applying the functor interpreting $C$ to the morphism interpreting $\D D$. As any $\Ld$-formula can be interpreted as a functor,   the notion of $C$-expansion can be extended to any $C\in \Ld$.
\end{remark}

%
%
%
%
\begin{remark}
  Whenever $X$ is clear from the context, with the notation for substitution introduced in  \cite[sec. 3.2]{StudiaLogica},  we write $C\llbracket B/X\rrbracket$ and  $C\llbracket A/X\rrbracket$  as $\SF{X}{C}{B}$ and $\SF{X}{C}{A}$, and leaving the main assumption $A$ implicit  we indicate the $C$-expansion of $\D D$ as 
$\SF{X}{C}{\mathscr{D}}
$.
\end{remark}
%
%

\begin{remark}\label{cexpweakexp}In the present paper, we will only be concerned with the $C$-expansion of derivations  of the form $\rr{\TT{El}}^{\vec m}_A$.
Observe that, by the definition of C-expansion, the undischarged assumptions of $\SF{X}{C}{{\rr{\TT{El}}}^{\vec m}_A}$ besides $\SF{X}{C}{A}$ are the same as the undischarged assumptions of $\rr{\TT{El}}^{\vec m}_A$ besides $A$, namely (see Definition~\ref{weakexp} above) $\stackrel{m_1}{F_1},\ldots, \stackrel{m_n}{F_n}$. 
\end{remark}

\subsection{The $\varepsilon$-conversions}

As recalled in the introduction, it is common to characterize identity of proofs using an equivalence induced by (the symmetric closure of) some reduction relation over derivations. 
The equivalence $\simeq_{\beta\eta}$ generated by the $\beta$- and $\eta$-conversions for $\Nd$ is standard (and recalled in Appendix \ref{betaetagamma}). Propositional connectives like $\vee$ require, in addition to $\beta$- and $\eta$-conversion rules, the so-called permutative conversions (which allow to permute an elimination rule upwards a $\vee$-elimination rule), that we call here $\gamma$-conversions, generating an equivalence relation $\simeq_{\beta\eta\gamma}$. 
Analogous conversions can be defined for all polynomial connectives, see Appendix \ref{betaetagamma}.

\begin{remark}\label{rem:gammapiu}
The equivalence relation $\simeq_{\beta\eta\gamma}$ can be further extended in two equivalent ways: either by replacing $\gamma$ by a stronger permutation $\gamma^{+}$ which allows the permutation of an arbitrary derivation upwards across an application of  $\dagger$E, or by replacing $\eta$ by a stronger $\eta^{+}$  which expresses in categorical terms the universality of the connective ($\eta^{+}$ in fact subsumes both $\gamma$ and $\gamma^{+}$). 
\end{remark}

The equivalence-preservation problem of the RP-translation can be formulated as the failure of the implication below (see \cite{StudiaLogica}):
\begin{center}
$\D D_{1} \simeq_{\beta\eta\gamma}\D D_{2}  \qquad \Rightarrow \qquad 
\D D^{*}_{1} \simeq_{\beta\eta} \D D_{2}^{*}$
\end{center}

In \cite{StudiaLogica} (see Section 4.1) we showed that the implication above does hold when the equivalence we consider for $\Nd$ is the one induced by adding to $\beta$- and $\eta$-conversions a new class of conversions, called $\varepsilon$-conversions (for the case of $\circ$ these are  shown in Table \ref{epsi-conv}, for arbitrary universal polynomial formulas see Appendix~\ref{epsilonappendix}). 
%
%
\begin{proposition}[\cite{StudiaLogica}]\label{eps-preservation}
For all $\Nv$-derivations $\D D_{1}$ and $\D D_{2}$, if $\D D_{1}\simeq_{\beta\eta\gamma} \D D_{2}$, then $\D D_{1}^{*} \simeq_{\beta\eta\varepsilon} \D D^{*}_{2}$.
\end{proposition}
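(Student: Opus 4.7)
The plan is to proceed by induction on the length of the generating chain of $\simeq_{\beta\eta\gamma}$. Because this equivalence is the symmetric, transitive, reflexive, and contextual closure of the basic one-step conversions, and because the RP-translation is compositional with respect to substitution (i.e.\ $(\D D[\D D'/x])^{*}=\D D^{*}[(\D D')^{*}/x]$), it suffices to prove the following local claim: for every single-step conversion $\D D \to_{c} \D D'$ with $c\in\{\beta,\eta,\gamma\}$ at the top of some redex, we have $\D D^{*} \simeq_{\beta\eta\varepsilon} (\D D')^{*}$. Closure under context then lifts this to arbitrary positions.

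The easy cases are the implicational conversions together with $\beta_{\vee}$. Since $^{*}$ acts as the identity on $\imp$, any $\beta_{\supset}$- or $\eta_{\supset}$-step in $\Nv$ maps to the analogous step in $\Nd$. For $\beta_{\vee}$, the translation of $\vee\text{E}(in_{k}(\D D),\, x_{1}.\D D_{1},\, x_{2}.\D D_{2})$ amounts to instantiating the translated injection (whose head is a $\forall\text{I}$ followed by two $\imp\text{I}$) and feeding it the corresponding branch, so a cascade of $\beta_{\forall}$- and $\beta_{\supset}$-reductions produces exactly the translation of the reduct, by compositionality.

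The two genuinely non-trivial cases are $\gamma_{\vee}$ and $\eta_{\vee}$. For a $\gamma_{\vee}$-permutation, an elimination rule $r$ of conclusion $D$ applied to the output of a $\vee\text{E}$ of conclusion $C$ gets translated, on the one hand, as the composition of the translated $\vee\text{E}$ at instantiation $C^{*}$ with the translation of $r$, and, on the other hand, as a $\vee\text{E}$ at instantiation $D^{*}$ with (the translation of) $r$ pushed into each branch. The precise content of the $\varepsilon$-conversion for $\vee$ is exactly the naturality identity equating these two patterns; viewed functorially, it says that the $D^{*}$-expansion of the morphism underlying the two branches commutes with the eliminators. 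Hence a single instance of $\varepsilon$, together with a few auxiliary $\beta$- and $\eta$-steps to normalise the shape of the two sides, validates the step. For $\eta_{\vee}$, which replaces a derivation $\D D$ of $A\vee B$ by $\vee\text{E}(\D D,\, x.in_{1}(x),\, y.in_{2}(y))$, the translated identity is a parametricity statement at the universal type $(A\vee B)^{*}$ and is again captured by an instance of $\varepsilon$ at that type.

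The main obstacle is the $\gamma_{\vee}$ case: one has to check that the appropriate instance of $\varepsilon$, formulated via the $C$-expansion of Definition~\ref{cexp} applied to a weak expansion (Definition~\ref{weakexp}), matches exactly the two translated derivations under the correct substitutions for $X$ and the correct main assumption, and that the auxiliary $\beta$- and $\eta$-reductions needed to bring both sides into a common canonical shape are indeed available. Once the bookkeeping for $\vee$ has been set up cleanly, the $\imp$-cases and the contextual-closure part of the argument are purely mechanical.
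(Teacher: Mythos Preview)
Your outline is essentially the paper's own approach: a rule-by-rule analysis in which the implicational conversions and $\beta_{\vee}$ are immediate, while $\gamma_{\vee}$ and $\eta_{\vee}$ are the two cases that genuinely need $\varepsilon$. The paper packages this via two lemmas---compositionality of $(\cdot)^{*}$ with respect to contexts, and a $\beta$-reduction lemma for $A$-expansions applied to $\lambda$-abstractions---and then treats $\gamma^{+}$ and $\eta$ for $\vee$ separately, exactly as you sketch.

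One point worth correcting in your assessment: you identify $\gamma_{\vee}$ as ``the main obstacle'' and dispatch $\eta_{\vee}$ in a sentence, but in the paper's development the $\eta_{\vee}$ case is in fact the subtler one. For $\gamma_{\vee}$ a single (non-variable-capturing) instance of $\varepsilon$ followed by $\beta$-steps suffices, and one even gets a directed reduction $\rightsquigarrow_{\beta\varepsilon}$. For $\eta_{\vee}$, by contrast, one must first $\eta$-expand $\D D^{*}$ in $\Nd$ (using $\supset\eta$ and $\forall\eta$) to create a term of the right shape, and then apply $\varepsilon$ with the introduction context $\Lambda X.\lambda f_{1}\lambda f_{2}.[\ ]$, which \emph{captures} the variables $X,f_{1},f_{2}$. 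This variable-capturing variant of $\varepsilon$ is not the primitive rule but must itself be derived from the standard $\varepsilon$ together with $\beta$, $\eta$, and symmetry; this is why the $\eta_{\vee}$ case yields only an equivalence $\simeq_{\beta\eta\varepsilon}$, not a reduction. Your phrase ``an instance of $\varepsilon$ at that type'' glosses over this, so when you carry out the details, budget the bookkeeping there rather than in the $\gamma_{\vee}$ case.
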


Semantically, the $\varepsilon$-conversions express a naturality condition for 
$\Nd$-derivations. Thus, the results of our previous paper were a syntactic reformulation of some well-known properties which hold in parametric models of $\Nd$ (see \cite{Bainbridge1990} and \cite{Girard1992}).

\begin{remark} The proof of Proposition~\ref{eps-preservation} scales straightforwardly to the whole of $\Np$ (see Appendix~\ref{eps-preservation-lambda}), and it actually holds if one replaces $\eta$ (resp.~$\gamma$) with the more general $\eta^+$ (resp.~$\gamma^+$) (see Remark~\ref{rem:gammapiu} above, Remark~\ref{gammag-lambda} in Appendix~\ref{betaetagamma} and Section~\ref{gammag-subs} below). The class of $\varepsilon$-conversions needed to establish the analog of Proposition~\ref{eps-preservation} for $\Nb$ are  depicted in Table~\ref{epsi-conv}.
\end{remark}


\begin{table}[!h!]

\rule{\textwidth}{.5pt}

\medskip
\begingroup\makeatletter\def\f@size{8}\check@mathfonts
\begin{lrbox}{\mypti}
\raisebox{2.5ex}[-0ex][0ex]{\begin{varwidth}{\linewidth}
$\cexpD{X}{F_1\supset F_2\supset  X}{\AXC{$A$}\noLine\UIC{$\mathscr{D}_2$}\noLine\UIC{$B$}\DP}$
\end{varwidth}}
\end{lrbox}
\begin{lrbox}{\mypto}
\raisebox{2.5ex}[-0ex][0ex]{\begin{varwidth}{\linewidth}
$\cexpD{X}{F_3\supset X}{\AXC{$A$}\noLine\UIC{$\mathscr{D}_2$}\noLine\UIC{$B$}\DP}$
\end{varwidth}}
\end{lrbox}
\adjustbox{center, scale=0.8}{$
    \AXC{$\mathscr{D}_1$}
\noLine
\UnaryInfC{$\forall X. \curc(F_1,F_2,F_3)$}\UIC{\SF{X}{\curc(F_1,F_2,F_3)}{A}}
\AxiomC{$\SF{X}{F_1\supset F_2\supset  X}{A}$}
\RightLabel{\footnotesize$\supset$E}
\BinaryInfC{$\SF{X}{(F_3\supset X)\supset X )}{A}$}
\AXC{$\SF{X}{F_3\supset X}{A}$}
\RightLabel{\footnotesize$\supset$E}
\BIC{$A$}
\noLine
\UnaryInfC{$\mathscr{D}_2$}
\noLine
\UnaryInfC{$B$}
\DisplayProof\qquad\qquad\qquad\qquad 
$}
                   \begin{equation*}\tag{$\varepsilon$}\label{epsilonew}
\ \ \rightsquigarrow_{\varepsilon} \ \ 
\end{equation*}
\adjustbox{center,scale=0.8}{$
\qquad\qquad  \qquad\qquad\qquad\qquad\qquad\qquad\qquad\qquad\qquad\AXC{$\mathscr{D}_1$}
\noLine
\UnaryInfC{$\forall X. \curc(F_1,F_2,F_3)$}\RightLabel{\footnotesize$\forall $E}\UIC{$\SF{X}{\curc(F_1,F_2,F_3)}{B}$}\AxiomC{\usebox{\mypti}}
  \RightLabel{\footnotesize$\supset$E}
  \BIC{$\SF{X}{(F_3\supset X)\supset X}{B}$} \AxiomC{\usebox{\mypto}}
 \RightLabel{\footnotesize$\supset$E}
 \BinaryInfC{$B$}\DP
$}

\endgroup

\medskip
\rule{\textwidth}{.5pt}
\caption{$\varepsilon$-conversion}\label{epsi-conv}
\end{table}

\section{Predicative  translations via atomization}\label{sec-ferris}

In this section we show that  the RP-translation can be related to  the predicative translations by embedding the $\Ndrp$ fragment of $\Nd$  into the atomic fragment $\Nda$. 

\subsection{The FF- and ESF-translations}

As we recalled in the introduction, Ferreira and Ferreira \cite{Ferreira2006, Ferreira2009, Ferreira2013, Ferreira2017}, proposed  an alternative translation of $\NI$-derivations into $\Nd$ that we  call the  FF-translation. The FF-translation agrees with the RP-translation on how to translate formulas, but not on how to translate derivations. In particular, the FF-translation  does not use the full power of the $\forall\text{E}$ rule of $\Nd$, but rather it maps  derivations in $\NI$ into derivations in $\Nda$.


The FF-translation exploits the property of \emph{instantiation overflow}:
\begin{definition}
  A formula $\forall X. A\in\Ld$ enjoys the instantiation overflow property iff for all formulas $B\in\Ld$
  \begingroup\makeatletter\def\f@size{10}\check@mathfonts
  $$\forall X. A \vdash_{\Nda} A\llbracket B/X\rrbracket $$
\endgroup  
\end{definition}
While Ferreira and Ferreira only address standard intuitionistic connectives, it is easily seen that the instantiation overflow property holds for all universal polynomial formulas $\forall X.A \in\Ldrp$  (and actually for many more, see \cite{Pistone2018}). The results of Ferreira and Ferreira thus scale to the whole system $\Np$. For simplicity we will focus here on the  connective $\bullet$ and on the fragments $\Nb$ and $\Ndrpc$ of $\Np$ and $\Ndrp$ respectively. However,  all definitions and results here presented scale to all polynomial connectives, as shown in Appendix.



By reformulating Ferreira and Ferreira's insight, we define an embedding of $\Ndrpc$ into $\Nda$ that we call FF-atomization, and using it we define the FF-translation from $\Nb$ into $\Nda$ as the composition of the RP-translation and FF-atomization.

\begin{definition}[FF-atomization, FF-translation]\label{all-at}
If $\D D$ is an $\Ndrpc$-derivation,  the {\em FF-atomization}  of $\D D$, which we indicate as $\D D^{\downarrow}$,  is the $\Nda$-derivation defined by induction on $\D D$ as follows. We only consider the case in which the last rule $\D D$ is $\forall\text{E}$  with a non-atomic witness,  since all other rules
are translated in a trivial way. In this case observe that 
\begingroup\makeatletter\def\f@size{10}\check@mathfonts
$$\D D= {\small\AXC{$\mathscr{D}'$}\noLine\UIC{$\forall X. \curc(A,B,C)$}\RightLabel{$\forall\text{E}$}\UIC{$\curc(A,B,C)\llbracket F/X\rrbracket$}\DP}$$ 
\endgroup

We define $\D D^{\downarrow}$ by a sub-induction on  $F$.

\begin{itemize}

\item If $F= F_1\supset F_2$ then 
 \begingroup\makeatletter\def\f@size{8}\check@mathfonts
\begin{lrbox}{\mypti}
\begin{varwidth}{\linewidth}
$\AXC{$\mathscr{D}'$}\noLine\UIC{$\forall X. \curc(A,B,C)$}\RightLabel{$\forall\text{E}$}\UIC{$\curc(A,B,C)\llbracket F_2/X\rrbracket$}\DP$
\end{varwidth}
\end{lrbox}

 $$
 \mathscr{D}^{\downarrow}=\def\defaultHypSeparation{\hskip .05in}\AXC{\raisebox{4ex}{$\left(\usebox{\mypti}\right)^{\downarrow}$}}\AXC{$\stackrel{k_1}{A\supset B\supset (F_1\supset F_2)}$}\AXC{$\stackrel{o}{A}$}\RightLabel{$\mathord{\supset}\text{E}$}\BIC{$B\supset F_1\supset F_2$}\AXC{$\stackrel{o'}{B}$}\RightLabel{$\mathord{\supset}\text{E}$}\BIC{$F_1\supset F_2$}\AXC{$\stackrel{m}{F_1}$}\RightLabel{$\mathord{\supset}\text{E}$}\BIC{$F_2$}\RightLabel{$\mathord{\supset}\text{I}$ $(o')$}\UIC{$B\supset  F_2$}\RightLabel{$\mathord{\supset}\text{I}$ $(o)$}\UIC{$A\supset  B\supset F_2$}\RightLabel{$\mathord{\supset}\text{E}$}\insertBetweenHyps{\hspace{-2 em}}\BIC{$(C\supset F_2)\supset F_2$}\AXC{$\stackrel{k_2}{C\supset (F_1\supset F_2)}$}\AXC{$\stackrel{o}{C}$}\RightLabel{$\mathord{\supset}\text{E}$}\BIC{$F_1\supset F_2$}\AXC{$\stackrel{m}{F_1}$}\RightLabel{$\mathord{\supset}\text{E}$}\BIC{$F_2$}\RightLabel{$\mathord{\supset}\text{I}$ $(o)$}\UIC{$C\supset  F_2 $}\RightLabel{$\mathord{\supset}\text{E}$}\BIC{$F_2$}\RightLabel{$\mathord{\supset}\text{I}$ $(m)$}\UIC{$F_1\supset F_2$}\RightLabel{$\mathord{\supset}\text{I}$ $(k_2)$}\UIC{$(C\supset (F_1\supset F_2))\supset (F_1\supset F_2)$}\RightLabel{$\mathord{\supset}\text{I}$ $(k_1)$}\UIC{$\curc(A,B,C)\llbracket F_1\supset F_2/X\rrbracket$}\DP $$
\endgroup

  \item If $F=\forall Z.  F'$, the clause is analogous to the previous one (for a fully detailed definition, see Definition~\ref{at-lambda} in Appendix~\ref{ff-def-appendix})

\end{itemize}

\noindent
 If $\mathscr{D}$ is an $\Nb$-derivation, we call the $\Nda$-derivation $\mathscr{D}^{*\downarrow}$ the 
 FF-translation of $\mathscr{D}$.

\end{definition}

%


\begin{remark}
Ferreira and Ferreira present their result in a different way, by using the inductive clauses of Definition~\ref{all-at} to give a direct proof of instantiation overflow for the universal formulas of the form $(A\vee B)^{*}$, and they refer to what we here called the FF-translation of $\D D$ as to ``the canonical translation of $\D D$ in $F_{at}$ provided by instantiation overflow'' \cite{Ferreira2017}. This difference in presentation will allow a more straightforward formulation of our results. 
\end{remark}

Whereas the FF-
translation is defined by combining the RP-translation from $\Nb$ into $\Ndrp$ with  the FF-atomization embedding  $\Ndrp$ into $\Nda$
%
%
, more recently Esp\'irito Santo and Ferreira  \cite{ESF19} introduced an alternative translation by ``directly'' defining an embedding $\sharp$ from $\NI$ into $\Nda$. We will refer to this translation as the \emph{ESF-translation}. Using the notation we introduced, the definition of $\sharp$ can be adapted to $\Nb$ (the definition actually scales to the whole of $\Np$, see Def.~\ref{reftrans} in Appendix~\ref{ff-def-appendix}), and the crucial case of the definition, that of an $\Nb$-derivation ending with an application of $\bullet$E runs as follows (assuming $\mathsf{at}(C^*)=Z$):

\noindent
\resizebox{\textwidth}{!}{
$\left( \def\defaultHypSeparation{\hskip .01in}\AXC{$\D D_1$}\noLine\UIC{$\bullet(A_1,A_2,A_3)$}\AXC{$[A_1][A_2]$}\noLine\UIC{$\D D_2$}
 \noLine\UIC{$C$}\AXC{$[A_3]$}\noLine\UIC{$\D D_3$}\noLine\UIC{$C$}\RL{$\bullet$E}\TIC{$C$}\DP \right)^{\sharp} = 
\def\defaultHypSeparation{\hskip .02in}\AXC{$\D D_1^{\sharp}$}
\noLine
\UIC{$\forall X. \curc(A_1,A_2,A_3)$}
\RightLabel{$\forall\text{E}_{at}$}
\UIC{$\curc(A_1,A_2,A_3)\llbracket Z/X\rrbracket$}
\AXC{$[\stackrel{n_1}{A_1^*}][\stackrel{n_2}{A_2^*}]$}\noLine\UIC{$\D D_2^{\sharp}$}\noLine\UIC{$C^*$}\noLine\UIC{$\rr{\TT{El}}^{\vec m}_{C^*}$}\noLine\UIC{$Z$}\RL{$\imp$I $(n_2)$}\UIC{$A_2^*\imp Z^*$}\RL{$\imp$I $(n_1)$}\UIC{$A_1^*\imp A_2^*\imp Z^*$}\BIC{$\SF{X}{(A_3^*\imp X)\imp X}{Z}$}\AXC{$\stackrel{m}{[A_3^*]}$}\noLine\UIC{$\D D_2^{\sharp}$}\noLine\UIC{$C^*$}\noLine\UIC{$\rr{\TT{El}}^{\vec m}_{C^*}$}\noLine\UIC{$Z$}\RL{$\imp$I $(m)$}\UIC{$A_3^*\imp Z^*$}\BIC{$Z^*$}\noLine\UIC{$\phantom{(\vec m)}
  \rr{\TT{In}}_{C^*}\
(\vec m)
  $}
\noLine\UIC{$C^*$}
\DP$
}


\medskip
What is remarkable about these two translations 
is not only that they show that one can translate $\Nb$ using only a very weak predicative fragment of $\Nd$, but also the fact that, unlike the RP-translation, they  preserve the equivalence induced by $\gamma$- and $\eta$-conversion. In fact, both translations  map  not only $\beta$-, but also $\eta$- and $\gamma$-reduction steps in $\NI$ onto chains of $\beta\eta$-reduction steps (respectively $\beta\eta$-equivalences)  in $\Nd$
: 
%
%
%
\begin{proposition}\label{ff-preservation}
  For all $\Nb$-derivations $\D D_{1},\D D_{2}$, if $\D D_{1}\rightsquigarrow_{\beta\eta\gamma} \D D_{2}$, then
  \begin{enumerate}
  \item $\D D_{1}^{*\downarrow} \rightsquigarrow_{\beta\eta} \D D^{*\downarrow}_{2}$.
  \item $\D D_{1}^{\sharp} \simeq_{\beta\eta} \D D^{\sharp}_{2}$.
  \end{enumerate}
\end{proposition}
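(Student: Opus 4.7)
The plan is to proceed by induction on the derivation of $\D D_1 \rightsquigarrow_{\beta\eta\gamma} \D D_2$. Since both $(-)^{*\downarrow}$ and $(-)^{\sharp}$ are defined compositionally on the inductive structure of $\Nb$-derivations, the congruence (context) cases follow immediately by the inductive hypothesis; the substantive work lies in verifying the two claims for each single-step reduction $\D D \rightsquigarrow \D D'$, separately for $\beta_{\supset}$, $\beta_{\bullet}$, $\eta_{\supset}$, $\eta_{\bullet}$, and $\gamma_{\bullet}$.

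My strategy is to first establish claim (2) for the ESF-translation, and then derive (1) from it via a comparison lemma. The $\beta_{\supset}$ and $\eta_{\supset}$ cases are immediate because $\sharp$ commutes with $\supset$I and $\supset$E. For $\beta_{\bullet}$, a $\bullet\text{I}_k$-step immediately followed by $\bullet\text{E}$ is translated, via $\sharp$, into a derivation where the $\forall\text{E}_{at}$-step on the translated major premise is followed by the $k$-th branch wrapped in $\supset$I's that re-abstract the assumptions discharged by the original $\bullet\text{E}$; a sequence of $\beta_{\supset}$-steps in $\Nda$ then recovers the translation of the reduct.

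The hard cases are $\eta_{\bullet}$ and $\gamma_{\bullet}$. For $\eta_{\bullet}$, an $\bullet\text{E}$ whose branches are just the corresponding $\bullet\text{I}$'s re-introducing the discharged assumptions translates into a derivation where the weak-expansion scaffolding $\rr{\TT{El}}^{\vec m}_{C^*}$--$\rr{\TT{In}}(\vec m)$ sits on top of a $\forall\text{E}_{at}$-application whose branches $\beta\eta$-reduce to the translation of the major premise itself. For $\gamma_{\bullet}$, permuting a context $\D E$ upwards across an $\bullet\text{E}$ corresponds, in the target, to moving $\D E^{\sharp}$ from being applied after the simulated elimination to being inserted inside each of its branches; the $C$-expansion structure pre-installed by $\sharp$ is precisely what enables this rearrangement up to $\simeq_{\beta\eta}$. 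I expect $\gamma_{\bullet}$ to be the main obstacle, requiring careful bookkeeping of the discharged indices $\vec m$ of the scaffolding, and of the $\eta$-expansions needed to reabsorb the changes introduced by the permutation.

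For part (1), I would establish a comparison lemma showing that $\D D^{*\downarrow}$ and $\D D^{\sharp}$ differ only by a finite number of $\beta$-reductions (mirroring the remark in the introduction that the two translations differ only by some $\beta$-steps); a straightforward induction on $\D D$ should suffice. Combining this lemma with part (2) yields part (1). The stronger conclusion (actual $\rightsquigarrow_{\beta\eta}$-reduction, rather than only equivalence) available for $(-)^{*\downarrow}$ reflects the fact that FF-atomization inserts its scaffolding in a directed way which the subsequent $\beta$-steps then consume, whereas the ESF-translation pre-installs the full $C$-expansion scaffolding, so some rearrangements (those arising from $\eta_{\bullet}$ and $\gamma_{\bullet}$) can only close up to $\simeq_{\beta\eta}$.
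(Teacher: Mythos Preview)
Your approach to part (2) is essentially the paper's: induction on the reduction derivation, with the substantive work in the $\dagger\eta$ and $\dagger\gamma$ cases. The paper isolates the combinatorial core of the $\gamma$ case as a separate lemma (Lemma~\ref{lemmett1}), which says that for any principal context $\TT C$ and suitable multicontext $\TT M$,
\[
\TT C\big[\TT{In}_A\{\TT M_{\at A}\{\langle \TT{El}_A[u_a]\rangle\}\}\big] \simeq_\beta \TT{In}_B\{\TT M_{\at B}\{\langle \TT{El}_B[\TT C[u_a]]\rangle\}\},
\]
proved by induction on $\TT C$. Your description (``the $C$-expansion structure pre-installed by $\sharp$ is precisely what enables this rearrangement'') is pointing at the right phenomenon, but you will need something of exactly this shape to make the $\gamma$ case go through cleanly.

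Your plan for part (1), however, has a genuine gap. The comparison lemma you propose is correct and is in fact Proposition~\ref{ffe-esf-relation}: $\D D^{*\downarrow}\rightsquigarrow_\beta \D D^{\sharp}$. But combining it with part (2) gives only
\[
\D D_1^{*\downarrow}\ \rightsquigarrow_\beta\ \D D_1^{\sharp}\ \simeq_{\beta\eta}\ \D D_2^{\sharp}\ \mathrel{_{\beta}\!\!\leftsquigarrow}\ \D D_2^{*\downarrow},
\]
which yields $\D D_1^{*\downarrow}\simeq_{\beta\eta}\D D_2^{*\downarrow}$, not the directed claim $\D D_1^{*\downarrow}\rightsquigarrow_{\beta\eta}\D D_2^{*\downarrow}$. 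Your last paragraph even observes that (1) is a strictly stronger statement than what such a transfer can produce, so the argument as written does not close. The paper does \emph{not} derive (1) from (2); it cites the direct computations of Ferreira--Ferreira~\cite{Ferreira2009} (for $\beta\gamma$) and Ferreira~\cite{Ferreira2017} (for $\eta$), which track the actual reduction sequences under $(-)^{*\downarrow}$. To prove (1) you must redo an analysis specific to the FF-atomization, showing that each $\beta$-, $\eta$-, or $\gamma$-step is simulated by an actual $\beta\eta$-reduction sequence in $\Nda$; the comparison with $\sharp$ cannot shortcut this.
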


\begin{proof}Point 1 of the proposition was proved for the whole of $\NI$ by Ferreira and Ferreira \cite{Ferreira2009} (for $\beta\gamma$)  and by Ferreira \cite{Ferreira2017} for $\eta$ (although in this case by  adding a primitive conjunction to $\Nd$, see below Section~\ref{gammag-subs}), and we claim that, at the cost of tedious computations, those proofs can be scaled to the whole of $\Np$. In Appendix~\ref{etagamma-appendix} we give a proof scaled to the whole of $\Np$ of point 2, 
that  generalizes the analogous result for $\NI$ of 
  Esp\'irito Santo and Ferreira in \cite{ESF19}.
\end{proof}

\subsection{The $\varepsilon$-translation}

We now show that the $\varepsilon$-rule can be used to clarify the relationship between the RP-translation and the FF- and ESF-translations. To see how, 
%
%
we  define  an alternative atomization procedure yielding yet another translation of $\Nb$ into $\Nda$ (also this atomization  scales to the whole of $\Np$, see Definition~\ref{ateps-lambda} in Appendix~\ref{ff-def-appendix} :


\begin{definition}[$\varepsilon$-atomization, $\varepsilon$-translation]\label{all-at-alt}
The definition differs from Definition~\ref{all-at}  in the following respect: In case 
\begingroup\makeatletter\def\f@size{10}\check@mathfonts
$$\D D= \AXC{$\mathscr{D}'$}\noLine\UIC{$\forall X. \curc(A,B,C)$}\RightLabel{$\forall\text{E}$}\UIC{$\curc(A,B,C)\llbracket F/X\rrbracket$}\DP$$
\endgroup
where  $F$ is  not atomic and  $Z$ is the variable in rightmost position in $F$, using the notation introduced in Sections~\ref{weakexp-sec}--\ref{weakexp-sec} (see in particular Remark~\ref{cexpweakexp}), %
%
we define
:
\begingroup\makeatletter\def\f@size{10}\check@mathfonts
\begin{lrbox}{\mypte}
\begin{varwidth}{\linewidth}
$\SF{X}{A\supset B\imp  X}{\AXC{$F$}\noLine\UIC{$\D E^{\vec m}_{F}$}\noLine\UIC{$Z$}\DP}$
\end{varwidth}
\end{lrbox}

\begin{lrbox}{\mypta}
\begin{varwidth}{\linewidth}
\AXC{$\D D'^{\downarrow^{\varepsilon}}$}
\noLine
\UIC{$\forall X. \curc(A,B,C)$}
\RightLabel{$\forall\text{E}_{at}$}
\UIC{$\curc(A,B,C)\llbracket Z/X\rrbracket$}
\DP
\end{varwidth}
\end{lrbox}

\begin{lrbox}{\mypto}
\begin{varwidth}{\linewidth}
\AXC{$\ddots$}
\noLine
\UIC{$\forall \OV Y_{n}(F_{n}[Y_{B}/Y]\supset Y_{B})$}
\doubleLine
\RightLabel{$\forall_{X} E$}
\UIC{$F_{n}[Y_{B}/Y]\supset Y_{B}$}
\DP
\end{varwidth}
\end{lrbox}

\begin{lrbox}{\myptu}
\begin{varwidth}{\linewidth}
$\phantom{A}$

\vspace{-1.7cm}
$\stackrel{k_2}{\SF{X}{C\supset X}{\AXC{$F$}\noLine\UIC{$\D E^{\vec m}_{F}$}\noLine\UIC{$Z$}\DP}}$
\end{varwidth}
\end{lrbox}

$$\D D^{\downarrow^{\varepsilon}}\qquad= \AXC{\usebox{\mypta}}
\AXC{$\stackrel{k_1}{\usebox{\mypte}}$}
\RightLabel{$\mathord{\supset}\text{E}$}
\BIC{$(C\supset Z)\supset Z$}
\AXC{\usebox{\myptu}}
\RightLabel{$\mathord{\supset}\text{E}$}
\BIC{$Z$}
\noLine\UIC{$\phantom{(\vec m)}
  \D I_F\
(\vec m)
  $}
\noLine\UIC{$F$}\RightLabel{$\mathord{\supset}\text{I}$ $(k_2)$}\UIC{$(C\supset F)\supset F$}\RightLabel{$\mathord{\supset}\text{I}$ $(k_1)$}\UIC{$\curc(A,B,C)\llbracket F/X\rrbracket$}
\DP
$$\endgroup
If $\D D$ is an $\Nb$-derivation, we call the $\Nda$-derivation $\D D^{*\downarrow^{\varepsilon}}$ the \emph{${\varepsilon}$-translation of $\D D$}.
\end{definition}

The name ``$\varepsilon$-atomization'' is justified by the fact that, as can be easily verified in the case of $\circ$, the $\varepsilon$-atomization $\D D^{\downarrow^{\varepsilon}}$ of an $\Ndrpc$-derivation $\D D$ is the result of applying $\eta$-expansions and $\varepsilon$-conversions to $\D D$:

\begin{proposition}\label{rp-ffe-relation} If $\D D$ is an $\Ndrpc$-derivation, then   $\D D \simeq_{\eta\varepsilon} \mathscr{D}^{\downarrow^{\varepsilon}}$.

\end{proposition}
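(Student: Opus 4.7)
The plan is to proceed by induction on the structure of $\D D$. For every case of the last rule applied other than a $\forall$E with non-atomic witness, $\D D^{\downarrow^{\varepsilon}}$ is defined by a straightforward recursion on subderivations, so the claim follows from the induction hypothesis together with the congruence of $\simeq_{\eta\varepsilon}$.

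The only genuinely interesting case is when $\D D$ ends with
$$
\AXC{$\D{D}'$}\noLine\UIC{$\forall X.\curc(A,B,C)$}\RightLabel{$\forall\text{E}$}\UIC{$\curc(A,B,C)\llbracket F/X\rrbracket$}\DisplayProof
$$
for some non-atomic witness $F$ with rightmost atom $Z=\at{F}$. The strategy is to exhibit a three-step sequence of $\eta\varepsilon$-moves carrying $\D D$ into $\D D^{\downarrow^{\varepsilon}}$. First, invoke the induction hypothesis to replace $\D{D}'$ by $\D{D}'^{\downarrow^{\varepsilon}}$ up to $\simeq_{\eta\varepsilon}$. Second, $\eta$-expand the conclusion: the outer implications of $\curc(A,B,C)\llbracket F/X\rrbracket$ unfold to two discharged assumptions $k_1{:}A\supset B\supset F$ and $k_2{:}C\supset F$ above an intermediate occurrence of $F$, which is itself replaced by its weak expansion, so that $\D{E}^{\vec m}_F$ passes from $F$ to $Z$ and $\D{I}_F(\vec m)$ returns from $Z$ back to $F$. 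Third, apply an $\varepsilon$-conversion to the $\forall$E step, with the freshly exposed $\D{E}^{\vec m}_F$ playing the role of the continuation $\D{D}_2$ of the schema in Table~\ref{epsi-conv}: this switches the witness of $\forall$E from $F$ to the atomic formula $Z$, and $C$-expands the arguments $k_1, k_2$ into $\SF{X}{A\supset B\supset X}{\D{E}^{\vec m}_F}$ and $\SF{X}{C\supset X}{\D{E}^{\vec m}_F}$. The resulting derivation is precisely $\D D^{\downarrow^{\varepsilon}}$ as displayed in Definition~\ref{all-at-alt}.

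The main obstacle will be the bookkeeping required to verify that after these three moves the result coincides with $\D D^{\downarrow^{\varepsilon}}$ on the nose. One must check that the discharged labels $\vec m, k_1, k_2$ introduced during the $\eta$-expansion step are the very ones re-bound by the final $\supset$I-steps of $\D D^{\downarrow^{\varepsilon}}$, and that the $C$-expansions produced by the $\varepsilon$-conversion agree with those prescribed in Definition~\ref{all-at-alt}. This matching is essentially automatic because both derivations share the same weak-expansion decomposition $\D{E}^{\vec m}_F / \D{I}_F(\vec m)$ of $F$; the detailed verification becomes most transparent in the $\lambda$-calculus notation of the appendix.
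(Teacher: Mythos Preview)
Your proposal is correct and follows essentially the same route as the paper's proof (Appendix~\ref{rp-ffe-appendix}): induction on the derivation, with the only nontrivial case being $\forall$E with non-atomic witness, handled by first $\eta$-expanding the conclusion $\curc(A,B,C)\llbracket F/X\rrbracket$ (exposing the arguments $k_1,k_2$ and the weak expansion of $F$) and then applying a single $\varepsilon$-step with $\rr{\TT{El}}_F$ as the context. The only cosmetic difference is that you invoke the induction hypothesis on $\D D'$ first, while the paper applies it last; since $\simeq_{\eta\varepsilon}$ is a congruence this ordering is immaterial.
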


\begin{proof} 
See  Appendix \ref{rp-ffe-appendix}.
 \end{proof}

The relationship between the three predicative translations is very close. In fact, they yield $\beta$-equivalent derivations, with the $\varepsilon$-translation lying between the FF-translation and the ESF-translation, in the sense below:






\begin{proposition}\label{ffe-esf-relation}
For all $\Nb$-derivations $\D D$, $\D D^{*\downarrow}\rightsquigarrow_{\beta} \D D^{*\downarrow^{\varepsilon}}\rightsquigarrow_{\beta} \D D^{\sharp}$.
\end{proposition}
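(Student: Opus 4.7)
The plan is to prove both inequalities by induction on the structure of the $\Nb$-derivation $\D D$. Since the three translations $(-)^{*\downarrow}$, $(-)^{*\downarrow^{\varepsilon}}$ and $(-)^{\sharp}$ act uniformly on derivations whose last rule is an assumption, $\supset$I or $\supset$E, and since the RP-translation of $\bullet$I$_{1}$ and $\bullet$I$_{2}$ produces no occurrence of $\forall$E with a non-atomic witness, the only interesting case in both parts is when $\D D$ ends with an application of $\bullet$E. Under the RP-translation this rule becomes a sequence $\supset\text{E}\cdot\supset\text{E}\cdot\forall\text{E}$ in which the $\forall$E-step has the generally non-atomic witness $C^{*}$, and the two atomizations modify exactly this step.

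For the first inequality $\D D^{*\downarrow}\rightsquigarrow_{\beta}\D D^{*\downarrow^{\varepsilon}}$, I would run a sub-induction on the witness $F=C^{*}$. When $F$ is atomic both atomizations coincide. When $F=F_{1}\supset F_{2}$, the FF-atomization produces by Definition~\ref{all-at} an outer term of the form $\lambda k_{1}k_{2}.\lambda m.(\mathrm{rec})(\lambda ab.k_{1}abm)(\lambda c.k_{2}cm)$, where $(\mathrm{rec})$ is the FF-atomization at witness $F_{2}$. Applying the sub-inductive hypothesis, $(\mathrm{rec})$ $\beta$-reduces to the $\varepsilon$-atomization at $F_{2}$, which by its flat shape is itself of the form $\lambda k_{1}'k_{2}'\vec{m'}.t_{Z}(\lambda ab.k_{1}'ab\vec{m'})(\lambda c.k_{2}'c\vec{m'})$. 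Contracting the resulting inner $\beta$-redexes (those generated by pushing one further layer of $\lambda ab.k_{1}ab(-)$ through the recursion) flattens the iterated applications into the single term $\lambda k_{1}k_{2}\vec{m}.t_{Z}(\lambda ab.k_{1}ab\vec{m})(\lambda c.k_{2}c\vec{m})$. This coincides with the structure produced by the $\varepsilon$-atomization, since the $C$-expansions $\SF{X}{A\supset B\supset X}{\rr{\TT{El}}^{\vec{m}}_{F}}$ and $\SF{X}{C\supset X}{\rr{\TT{El}}^{\vec{m}}_{F}}$ followed by $\rr{\TT{In}}_{F}(\vec{m})$ unfold into precisely these $\lambda$-terms. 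The quantifier sub-case $F=\forall Z.F'$ is analogous, with $\forall$I/$\forall$E redexes in place of $\supset$I/$\supset$E redexes.

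For the second inequality $\D D^{*\downarrow^{\varepsilon}}\rightsquigarrow_{\beta}\D D^{\sharp}$, the comparison is more direct. The RP-translation of the terminal $\bullet$E-rule wraps the sub-derivations $\D D_{2}^{*},\D D_{3}^{*}$ in abstractions $\lambda a_{1}a_{2}.\D D_{2}^{*}$ and $\lambda a_{3}.\D D_{3}^{*}$ which are then applied, via the two outer $\supset$E-rules, to the atomized $\forall$E-step. Once the $\forall$E is replaced by the $\varepsilon$-atomization construction of Definition~\ref{all-at-alt}, whose final two inferences are $\supset\text{I}(k_{1}),\supset\text{I}(k_{2})$, these two outer $\supset$E-rules form $\beta$-redexes with the trailing $\supset$I-rules. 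Contracting them substitutes the abstracted branches into the positions of $k_{1}$ and $k_{2}$ inside the $C$-expansions, and one further round of $\beta$-contraction of the resulting inner $(\lambda a_{1}a_{2}.\D D_{2}^{*\downarrow^{\varepsilon}})ab$ redexes yields the composition of $\D D_{2}^{*\downarrow^{\varepsilon}}$ with the weak expansion $\rr{\TT{El}}^{\vec{m}}_{C^{*}}$ in the first branch (and similarly for the second). Up to the induction hypothesis on $\D D_{1},\D D_{2},\D D_{3}$, this is precisely the shape of the $\bullet$E-clause of the ESF-translation as displayed in Section~\ref{sec-ferris}.

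The main technical obstacle is not the contraction of the $\beta$-redexes themselves, which is essentially forced by the shape of the definitions, but rather the careful bookkeeping needed to verify that the flattened term obtained from the iterated FF-atomization matches literally (not just up to $\beta\eta$) the $C$-expansions of the weak expansion used in the $\varepsilon$-atomization, including in the quantifier sub-case $F=\forall Z.F'$ and uniformly across all polynomial connectives. As the authors' choice of relegating such arguments to Appendix~\ref{ff-def-appendix} suggests, this matching becomes transparent and the verification essentially mechanical when phrased in $\lambda$-calculus notation, whereas in natural deduction notation the agreement is only manifest up to associativity of application and renaming of discharged assumptions.
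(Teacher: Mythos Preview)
Your proposal is correct and follows essentially the same approach as the paper's proof in Appendix~\ref{espirito-appendix}: induction on the derivation with the only non-trivial case being $\bullet$E, a sub-induction on the witness formula for the first inequality, and direct $\beta$-contraction of the outer $\supset$I/$\supset$E pairs followed by the inner redexes for the second. The only organizational difference is that the paper factors the first inequality through a standalone lemma at the $\Ndrp$ level (Proposition~\ref{ff-ffe-relation-lambda}: $u^{\downarrow}\rightsquigarrow_{\beta}u^{\downarrow^{\varepsilon}}$ for arbitrary $\Ndrp$-terms), which cleanly separates the outer induction on the derivation from the inner sub-induction on the witness and makes the generalization to all polynomial connectives uniform; your direct argument at the $\Nb$ level is equivalent in content.
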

\begin{proof}See Appendix~\ref{espirito-appendix}.
\end{proof}

\begin{remark}
That $\D D^{*\downarrow}\rightsquigarrow_{\beta} \D D^{\sharp}$ 
was already known from \cite{ESF19}.
\end{remark}



By putting together Proposition~\ref{ff-preservation} and 
\ref{ffe-esf-relation} we can deduce that also the $\varepsilon$-translation preserves permutative conversions and $\eta$-conversions:

\begin{corollary}\label{corolla}
  For all $\Nb$-derivations $\D D_{1},\D D_{2}$, if $\D D_{1}\simeq_{\beta\eta\gamma} \D D_{2}$, then
 $\D D_{1}^{*\downarrow\varepsilon} \simeq_{\beta\eta} \D D^{*\downarrow^\varepsilon}_{2}$.
\end{corollary}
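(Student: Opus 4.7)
The plan is to obtain the corollary as a direct chaining of Propositions~\ref{ff-preservation} and~\ref{ffe-esf-relation}, without having to analyze the effect of the $\varepsilon$-translation on $\eta$- and $\gamma$-steps directly. The $\varepsilon$-translation is, by Proposition~\ref{ffe-esf-relation}, $\beta$-equivalent to both the FF-translation and the ESF-translation, so any $\beta\eta$-equivalence that holds for one of these two predicative translations can be transported to the $\varepsilon$-translation up to some additional $\beta$-steps.

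Concretely, suppose $\D D_1 \simeq_{\beta\eta\gamma} \D D_2$. First, I would invoke Proposition~\ref{ff-preservation}.1, which yields $\D D_1^{*\downarrow} \simeq_{\beta\eta} \D D_2^{*\downarrow}$ (the assumption $\D D_1 \simeq_{\beta\eta\gamma} \D D_2$ is the symmetric-transitive closure of $\rightsquigarrow_{\beta\eta\gamma}$, and Proposition~\ref{ff-preservation}.1 states that each $\rightsquigarrow_{\beta\eta\gamma}$-step in the source is mapped to a $\rightsquigarrow_{\beta\eta}$-reduction in the target, hence closing under symmetry and transitivity preserves the equivalence). Second, I would apply Proposition~\ref{ffe-esf-relation} to each of $\D D_1$ and $\D D_2$, obtaining $\D D_i^{*\downarrow} \rightsquigarrow_\beta \D D_i^{*\downarrow^{\varepsilon}}$ for $i=1,2$, which in particular gives $\D D_i^{*\downarrow} \simeq_{\beta\eta} \D D_i^{*\downarrow^{\varepsilon}}$.

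Chaining these three equivalences by transitivity yields
\[
\D D_1^{*\downarrow^{\varepsilon}} \simeq_{\beta\eta} \D D_1^{*\downarrow} \simeq_{\beta\eta} \D D_2^{*\downarrow} \simeq_{\beta\eta} \D D_2^{*\downarrow^{\varepsilon}},
\]
which is the desired conclusion. Alternatively, the very same argument works through the ESF-translation, using Proposition~\ref{ff-preservation}.2 together with the second half of Proposition~\ref{ffe-esf-relation}.

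Since the argument is purely a concatenation of already-established results, there is no genuinely hard step; the only mild subtlety is to make sure the two prior propositions are being applied on the right sides of the chain (one to the translations of $\D D_1$ and $\D D_2$, the other to convert between the three predicative translations of a single derivation). One could, in principle, try to give a direct proof by examining how $\D D^{*\downarrow^{\varepsilon}}$ behaves under a single $\eta$- or $\gamma$-reduction in the source, but this would duplicate the case analyses already done by Ferreira, Ferreira, and Esp\'irito Santo, whereas the route via Proposition~\ref{ffe-esf-relation} avoids any such work.
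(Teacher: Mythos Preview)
Your proposal is correct and is exactly the argument the paper has in mind: the corollary is stated as an immediate consequence of combining Proposition~\ref{ff-preservation} with Proposition~\ref{ffe-esf-relation}, precisely by the chain $\D D_{1}^{*\downarrow^{\varepsilon}} \simeq_{\beta} \D D_{1}^{*\downarrow} \simeq_{\beta\eta} \D D_{2}^{*\downarrow} \simeq_{\beta} \D D_{2}^{*\downarrow^{\varepsilon}}$ (or the analogous chain through $\sharp$).
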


\begin{remark}\label{etaboh}
  The statement of Corollary \ref{corolla} cannot be expressed in terms of reduction, but only in terms of equivalence,
 due to the fact that neither $\D D_1\rightsquigarrow_{\eta}\D D_2$ nor $\D D_1\rightsquigarrow_{\gamma}\D D_2$  imply $\D D_1^{*\downarrow^{\varepsilon}}\rightsquigarrow_{\beta\eta} \D D_2^{*\downarrow^{\varepsilon}}$, but only $\D D_1^{*\downarrow^{\varepsilon}}\simeq_{\beta\eta} \D D_2^{*\downarrow^{\varepsilon}}$.
\end{remark}

By combining Proposition~\ref{rp-ffe-relation} with Proposition~\ref{ffe-esf-relation} we deduce that  the FF- and ESF-translations are $\beta\eta\varepsilon$-equivalent to the RP-translation:
\begin{corollary}\label{rp-ff-relation}
For all $\Nb$-derivations $\mathscr{D}$, $\mathscr{D}^* \simeq_{\beta\eta\epsilon} \mathscr{D}^{*\downarrow}$ and $\mathscr{D}^* \simeq_{\beta\eta\epsilon} \mathscr{D}^{\sharp}$.
\end{corollary}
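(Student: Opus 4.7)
The plan is to chain together the two results that immediately precede the corollary: Proposition~\ref{rp-ffe-relation} (which relates the RP-translated derivation to its $\varepsilon$-atomization via $\eta$- and $\varepsilon$-conversions) and Proposition~\ref{ffe-esf-relation} (which shows that the FF-, $\varepsilon$- and ESF-translations are all $\beta$-equivalent). No new computation is required; the statement is really a corollary in the strict sense.

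First I would observe that if $\D D$ is an $\Nb$-derivation, then (by the remark on page~\pageref{rp-trans}) $\D D^{*}$ is an $\Ndrpc$-derivation, so Proposition~\ref{rp-ffe-relation} applies to it and yields
\[
\D D^{*} \;\simeq_{\eta\varepsilon}\; (\D D^{*})^{\downarrow^{\varepsilon}} \;=\; \D D^{*\downarrow^{\varepsilon}}.
\]
Next, Proposition~\ref{ffe-esf-relation} gives the chain of $\beta$-reductions
\[
\D D^{*\downarrow} \;\rightsquigarrow_{\beta}\; \D D^{*\downarrow^{\varepsilon}} \;\rightsquigarrow_{\beta}\; \D D^{\sharp},
\]
which in particular entails $\D D^{*\downarrow^{\varepsilon}} \simeq_{\beta} \D D^{*\downarrow}$ and $\D D^{*\downarrow^{\varepsilon}} \simeq_{\beta} \D D^{\sharp}$.

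Combining the two by transitivity of $\simeq_{\beta\eta\varepsilon}$ one obtains
\[
\D D^{*} \;\simeq_{\eta\varepsilon}\; \D D^{*\downarrow^{\varepsilon}} \;\simeq_{\beta}\; \D D^{*\downarrow}
\qquad\text{and}\qquad
\D D^{*} \;\simeq_{\eta\varepsilon}\; \D D^{*\downarrow^{\varepsilon}} \;\simeq_{\beta}\; \D D^{\sharp},
\]
and both statements of the corollary follow. Since the argument is a purely formal composition there is no real obstacle; the only point worth double-checking is that the ambient equivalence $\simeq_{\beta\eta\varepsilon}$ actually contains both $\simeq_{\eta\varepsilon}$ and $\simeq_{\beta}$ (which is immediate from the definition as the symmetric-transitive closure of the union of the three reduction relations), so the two equivalences can indeed be composed. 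The whole proof thus reduces to one sentence invoking Propositions~\ref{rp-ffe-relation} and~\ref{ffe-esf-relation}.
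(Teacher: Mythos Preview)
Your proposal is correct and matches the paper's approach exactly: the paper presents this as an immediate corollary obtained ``by combining Proposition~\ref{rp-ffe-relation} with Proposition~\ref{ffe-esf-relation}'', with no further proof given. Your chain $\D D^{*}\simeq_{\eta\varepsilon}\D D^{*\downarrow^{\varepsilon}}\simeq_{\beta}\D D^{*\downarrow}$ (and similarly for $\D D^{\sharp}$) is precisely the intended argument.
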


The relationship between the four different translations is illustrated in Table \ref{unicfig}.

\begin{table}[h!]
\begin{center}
  \rule{\textwidth}{.5pt}
  For all $\Nb$ derivations $\D D_1$ and $\D D_2$ such that  $\D D_{1}\rightsquigarrow_{\beta\eta\gamma} \D D_{2}$:

  \medskip
\begin{tikzcd}   & \D D_{1}^{*}  \ar[rotate=270,symbol=\simeq_{\eta\varepsilon}]{d}
&   & \\
\D D_{1}^{*\downarrow}   \ar[symbol=\rightsquigarrow_{\beta}]{r}{} \ar[rotate=270,symbol=\dsarrow_{\beta\eta}
]{d}&  \D D_{1}^{*\downarrow^{\varepsilon}} \ar[rotate=270,symbol=\simeq_{\beta\eta}]{d} \ar[symbol=\rightsquigarrow_{\beta}]{r}{} &   \D D_{1}^{\sharp}  \ar[rotate=270,symbol=\simeq_{\beta\eta}]{d}
 & \\
 \D D_{2}^{*\downarrow}     \ar[symbol=\rightsquigarrow_{\beta}]{r}{}&  \D D_{2}^{*\downarrow^{\varepsilon}}   \ar[rotate=270,symbol=\simeq_{\eta\varepsilon}]{d}\ar[symbol=\rightsquigarrow_{\beta}]{r}{}&   \D D_{2}^{\sharp}  & \\
            & \D D_{2}^{*} &   & \\
\end{tikzcd}
\rule{\textwidth}{.5pt}
\end{center}
\caption{\small Relationship among the RP-, FF-, ESF- and $\varepsilon$-translations. 
}
\label{unicfig}
\end{table}

Summing up, all three predicative translations are equivalent, modulo $\beta\eta\varepsilon$-equivalence to the RP-translation and thus, semantically, they all interpret an $\Nb$-derivation using the same second-order proof (provided one understands second-order proofs as satisfying the naturality conditions expressed by $\varepsilon$-conversions). 

\section{\mbox{RP-translation and  $\simeq_{\beta\eta\epsilon}$ vs  predicative translations and $\simeq_{\beta\eta}$}}
\label{invertib-section}




Given the results of the previous sections, it might be tempting to say that the approach based on atomic polymorphism might provide a fully syntactic alternative to categorical semantics and related techniques for the study of identity of proofs for propositional connectives.

In this section we argue that this is not entirely the case, by stressing two important limitations of the predicative translations. First, they do not preserve the whole equational theory of propositional connectives, and in particular the predicative translations of $\land,\lor$ do not yield products and co-products in $\Nda$. Hence, \emph{strictu sensu} the predicative translations do not  fully solve the equivalence-preservation problem. Second,  the $\forall$E rule restricted to atomic witnesses is too weak to  prove the logical equivalence between a connective and its second-order translation.
In Prawitz'  terminology \citep{Prawitz1965}, we show that the connective $\lor$ is not \emph{strongly} definable in F$_{at}$ but only \emph{weakly} definable.

Conversely, we highlight that the approach based on atomic polymorphism looks more apt to the syntactic study of proof reduction, since the rewriting theory induced by $\varepsilon$-conversions looks rather intricate.

\subsection{Predicative translations and generalized permutations}\label{gammag-subs}

As is well known, in order to obtain a perfect match between the syntax of $\NI$ and the free bi-cartesian closed category, it is necessary to consider the following  stronger  permutative conversions in which every chunk of derivation (and not just those consisting of applications of elimination rules) can be permuted-up across an application of disjunction elimination:\footnote{These stronger  permutations were first formulated in natural deduction format  by \citet{See79} and have been recently discussed in the context of proof-theoretic semantics by \citet{Tra16,Tra18}.}\footnote{\label{gammapiufoot}The general form of the $\gamma^+$-conversions for an arbitrary polynomial connective (of which the one given here are a simple instance) is given in Appendix~\ref{betaetagamma}.}

\begingroup\makeatletter\def\f@size{10}\check@mathfonts
\begin{equation*}
  \label{gammaorG}
\AxiomC{$\D D$}
\noLine
\UnaryInfC{$A\lor B$}
\AxiomC{$\stackrel{n}{[A]}$}
\noLine
\UnaryInfC{$\D D_{1}$}
\noLine
\UnaryInfC{$C$}
\AxiomC{$\stackrel{m}{[B]}$}
\noLine
\UnaryInfC{$\D D_{2}$}
\noLine
\UnaryInfC{$C$}
\RightLabel{\footnotesize$\lor$E $(n,m)$}
\TrinaryInfC{$[C]$}
\noLine
\UnaryInfC{$\D D_3$}
\noLine
\UnaryInfC{$ D$}
\DisplayProof
\ \ \rightsquigarrow_{\gamma^+} \!\!\!
\AxiomC{$\D D$}
\noLine
\UnaryInfC{$A\lor B$}
\AxiomC{$\stackrel{n}{[A]}$}
\noLine
\UnaryInfC{$\D D_{1}$}
\noLine
\UnaryInfC{$[C]$}
\noLine
\UnaryInfC{$\D D_3$}
\noLine
\UnaryInfC{$ D$}
\AxiomC{$\stackrel{m}{[B]}$}
\noLine
\UnaryInfC{$\D D_{2}$}
\noLine
\UnaryInfC{$[C]$}
\noLine
\UnaryInfC{$\D D_3$}
\noLine
\UnaryInfC{$ D$}
\RightLabel{\footnotesize$\lor$E $(n,m)$}
\TrinaryInfC{$D$}
\DisplayProof
\end{equation*}
\endgroup

Proposition~\ref{eps-preservation} can be strengthened by replacing $\gamma$ with $\gamma^+$, so that the RP-translations of the left- and right-hand side of any instance of $\gamma^+$  are  $\beta\varepsilon$-equivalent  $\Nd$-derivations   (see Appendix~\ref{eps-preservation-lambda}).

However, Proposition \ref{ff-preservation} ceases to hold as soon as one replaces $\gamma$ with $\gamma^+$ (even if one replaces reduction $\rightsquigarrow$ with equivalence $\simeq$ in point 1).  That is, although the predicative translations preserve the equivalences induced by  $\eta$- and $\gamma$-conversions, they do not preserve all equivalences needed to interpret $\NI$ as a bi-cartesian closed category.

To see this, it is enough to consider the instance of $\gamma^+$ in which  $A$, $B$, $C$ and $D$ are atoms, $\mathscr{D}$ (respectively $\mathscr{D}_1$, $\mathscr{D}_2$) consists only of the assumptions of $A\vee B$ (resp.~$C$),  and $\mathscr{D}_3$ is the derivation $\AXC{$C\supset  D$}\AXC{$C$}\RL{\footnotesize ${\supset}\text{E}$}\BIC{$D$}\DP$. As the reader can easily check, the predicative translations of the left and right-hand side of this instance of $\rightsquigarrow_{\gamma^+}$  are not $\beta\eta$-equivalent, just like their RP-translations.

That the predicative translations fail to preserve the stronger permutations provides an explanation for another puzzling aspect of the approach of Ferreira  and co-authors. As observed in \cite{Ferreira2017}, Proposition~\ref{ff-preservation} fails if $\Nv$ is extended to include the standard rules of conjunction. 
%
 %
%
In particular, the predicative translations fail to preserve all instances of the $\eta$-equation for the standard conjunction rules. 
%
%
Nonetheless,  when the two standard elimination rules  $\wedge$E$_1$ and  $\wedge$E$_2$ 
are replaced by the general elimination rule  $\wedge$E$_p$  
as in $\Np$ (see above Remark~\ref{ex:fpfsum} and Remark~\ref{gencon}),
Proposition~\ref{ff-preservation} applies to the equivalence relation induced by the $\beta$-, $\eta$- and $\gamma$-conversions for ``generalized'' conjunction (see  Appendix~\ref{betaetagamma} for a formulation of these conversions in $\lambda$-calculus notation and Appendix~\ref{etagamma-appendix} for a proof of Proposition~\ref{ff-preservation} scaled to $\Np$).

The standard elimination rules can be defined using the general elimination rule as follows:

{\small$$\AXC{$A\wedge B$}\AXC{$[A]$}\RL{$\wedge$E$_p$}\BIC{$A$}\DP \qquad \AXC{$A\wedge B$}\AXC{$[B]$}\RL{$\wedge$E$_p$}\BIC{$B$}\DP$$}

\noindent
and given these definitions, the  $\eta$-equations for the standard conjunction rules become the following:

 {\small
   $$\AXC{$\D D$}\noLine\UIC{$A\wedge B$}\DP \quad\simeq
   \quad 
   \AXC{$\D D$}\noLine\UIC{$A\wedge B$}\AXC{$\stackrel{m}{A}$}\RL{\footnotesize$\wedge$E$_p$ $(m)$}\BIC{$A$}\AXC{$\D D$}\noLine\UIC{$A\wedge B$}\AXC{$\stackrel{n}{B}$}\RL{\footnotesize$\wedge$E$_{p}$ $(n)$}\BIC{$B$}\RL{\footnotesize$\wedge$I}\BIC{$A\wedge B$}\DP
   $$
}

The different behavior of the standard and the general elimination rules is explained by the fact that, as in the case of disjunction, the predicative translations of two $\gamma^+$-equivalent $\Np$-derivations need not be $\beta\eta$-equivalent $\Nda$-derivations, and that the \mbox{$\gamma^+$-conversion}  for conjunction:\footnote{The same remarks of footnote~\ref{gammapiufoot} apply.}

\begingroup\makeatletter\def\f@size{10}\check@mathfonts
\begin{equation*}
  \label{gammaorG}
\AxiomC{$\D D$}
\noLine
\UnaryInfC{$A\wedge B$}
\AxiomC{$\stackrel{n}{[A]}\stackrel{m}{[B]}$}
\noLine
\UnaryInfC{$\D D_{1}$}
\noLine
\UnaryInfC{$C$}
\RightLabel{\footnotesize$\wedge$E $(n,m)$}
\BinaryInfC{$[C]$}
\noLine
\UnaryInfC{$\D D_2$}
\noLine
\UnaryInfC{$ D$}
\DisplayProof
\ \ \rightsquigarrow_{\gamma^+} \!\!\!
\AxiomC{$\D D$}
\noLine
\UnaryInfC{$A\wedge B$}
\AxiomC{$\stackrel{n}{[A]}\stackrel{m}{[B]}$}
\noLine
\UnaryInfC{$\D D_{1}$}
\noLine
\UnaryInfC{$[C]$}
\noLine
\UnaryInfC{$\D D_2$}
\noLine
\UnaryInfC{$ D$}
\RightLabel{\footnotesize$\wedge$E $(n,m)$}
\BinaryInfC{$D$}
\DisplayProof
\end{equation*}
\endgroup

\noindent
is essential to define the  $\eta$-conversion for the standard elimination rules using the $\eta$-conversion for the general elimination rule (see Remark~\ref{etaconj} in Appendix~\ref{betaetagamma} for a proof).

\subsection{Extending $\Nd$ and $\Nda$ with primitive disjunction}
A second disadvantage of the predicative translations arises when one considers the extension $\Ndv$ of $\Nd$ with a primitive disjunction (for an early semantic investigation  of this system see \cite{Sol77,Gab81}),  
in which the following holds:


%
%
\begin{proposition}\label{rpequiv} For all $A\in \Ldv$: 
 $$A\dashv\vdash_{\Ndv} A^*$$
  \end{proposition}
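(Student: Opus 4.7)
The plan is to proceed by induction on the structure of $A \in \Ldv$, establishing $A \dashv\vdash_{\Ndv} A^*$ in each case. The atomic case $A = X$ is immediate since $X^* = X$. For $A = B \imp C$ and $A = \forall Y. B$, the translation commutes with the connective, i.e.\ $(B \imp C)^* = B^* \imp C^*$ and $(\forall Y.B)^* = \forall Y. B^*$, so the claim follows from the inductive hypotheses together with the congruence of $\dashv\vdash$ with respect to $\imp$ and $\forall$ (which in turn uses only $\imp\text{I}$, $\imp\text{E}$, $\forall\text{I}$ and $\forall\text{E}$ applied to variables). These cases are straightforward bookkeeping.

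The crucial case is $A = B \lor C$, where $(B \lor C)^* = \forall X.(B^* \imp X) \imp (C^* \imp X) \imp X$ and we must use the \emph{primitive} $\lor$ rules on the one side and the \emph{unrestricted} $\forall\text{E}$ rule on the other. For the direction $B \lor C \vdash_{\Ndv} (B \lor C)^*$, I would apply $\lor\text{E}$ to the assumption $B \lor C$: in the $B$-branch, the inductive hypothesis supplies a derivation of $B^*$ from $B$, so assuming $B^* \imp X$ (and vacuously $C^* \imp X$) yields $X$, and in the $C$-branch the analogous argument works; then a sequence of $\imp\text{I}$'s and a final $\forall\text{I}$ on $X$ (which is not free in any discharged assumption) produces $(B \lor C)^*$. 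For the converse $(B \lor C)^* \vdash_{\Ndv} B \lor C$, I would instantiate the quantifier with the witness $B \lor C$ itself, obtaining $(B^* \imp B \lor C) \imp (C^* \imp B \lor C) \imp B \lor C$; then from the inductive hypothesis $B^* \vdash B$ one builds $B^* \imp B \lor C$ by composing with $\lor\text{I}_1$, and symmetrically $C^* \imp B \lor C$ via $\lor\text{I}_2$, and two $\imp\text{E}$'s conclude.

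The only genuine obstacle is conceptual rather than technical: the witness $B \lor C$ used in the second half of the disjunction case is in general a \emph{compound} formula, so this step requires the unrestricted $\forall\text{E}$ rule of $\Ndv$ and cannot be carried out in $\Ndva$. This observation is exactly what motivates the subsequent contrast announced in the section, namely that $\lor$ is strongly definable in $\Ndv$ but only weakly definable in $\Ndva$. Aside from this point, the proof is a routine induction, and the statement for the whole of $\Ldv$ follows once the four cases above are verified.
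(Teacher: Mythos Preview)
Your proposal is correct and follows essentially the same approach as the paper: an induction on $A$ in which the only non-trivial case is the connective case, handled by instantiating the universal quantifier with the original formula in one direction and by case-splitting in the other. The paper's appendix presents the same argument for an arbitrary polynomial connective $\dagger$ (in $\lambda$-notation, via term contexts $\TT C$ and $\TT D$), of which your treatment of $\lor$ is the expected specialization; the only cosmetic difference is that the paper places the $\supset\mathrm{I}$'s inside each branch of the case split rather than outside it, which is immaterial.
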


  \begin{proof} See Appendix~\ref{proofprop}.
  \end{proof}

  That is, as soon as one extends $\Nd$ with a primitive disjunction
  , one can show in the extended system  $\Ndv$  that every formula is interderivable with its  RP-translation.\footnote{Actually, $A$ and its   RP-translation are not just interderivable, but can be shown to be isomorphic modulo $\simeq_{\beta\eta\gamma^+\varepsilon}$ (see for details \cite{PTCSL}).}

In contrast to what happens in $\Ndv$,
  a propositional formula $A$ and its RP-translation  $A^*$  may fail to  
  be interderivable in the extension of $\Nda$ with a primitive disjunction
  . By inspecting one direction of the proof of Proposition~\ref{rpequiv} it is clear that  $A\vdash_{\Ndva} A^*$. However, the inspection of the other direction clearly suggests that, at least in some cases
  , in order to  establish that $A^*\vdash_{\Ndva} A$  it is essential to apply $\forall$E with a non-atomic witness.  In particular,

\begin{proposition}\label{rpnequiv} $(Y\vee Z)^*\nvdash_{\Ndva} Y\vee Z$
\end{proposition}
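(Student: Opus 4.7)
The plan is to assume, for contradiction, that there is a derivation $\D D$ of $Y \vee Z$ from $(Y \vee Z)^*$ in $\Ndva$, and to derive an impossibility by a normal-form analysis followed by an appeal to Proposition~\ref{rpequiv}. Since $\Ndva$ is obtained from $\Nda$ by adding a primitive disjunction with its standard rules and permutations, it inherits normalization and the subformula property from $\Nda$; hence we may assume $\D D$ is normal (after performing $\gamma$-permutations on $\vee$E, so that no $\vee$E has another $\vee$E as major premise).

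The core of the argument is the following subformula analysis. The only undischarged assumption is $(Y \vee Z)^* = \forall X. (Y \imp X) \imp (Z \imp X) \imp X$. Since its outer $\forall$ can only be instantiated (via $\forall\text{E}_{at}$) with an atomic variable $W$, the possible ``subformulas with atomic instances'' of the assumption are $(Y \imp W) \imp (Z \imp W) \imp W$, $Y \imp W$, $Z \imp W$, $Y$, $Z$, and $W$; none of these is a disjunction. The subformulas of the conclusion $Y \vee Z$ are $Y \vee Z$, $Y$, $Z$. So the only disjunctive formula that may occur anywhere in $\D D$ is $Y \vee Z$ itself. Any maximal elimination chain (``track'' in Prawitz's sense) starting at the assumption $(Y \vee Z)^*$ must therefore proceed by one $\forall\text{E}_{at}$ followed by $\imp$E's, ending at an atomic $W$; in particular, no track can end at $Y \vee Z$.

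Using this, I rule out every possibility for the last rule of $\D D$ other than $\vee\text{I}$. The last rule cannot be $\imp$I or $\forall$I (wrong shape of conclusion); it cannot be an $\imp$E or $\forall\text{E}_{at}$ in a main branch (tracks end at atomic formulas, not at $Y \vee Z$); and it cannot be $\vee$E either: in a normal derivation, the major premise of a $\vee$E is not introduced by $\vee$I (that would be a $\beta$-redex) and cannot be another $\vee$E (ruled out by the permutative normal form), so it must be an assumption or the end of an elimination chain---neither is available since $(Y \vee Z)^*$ produces no disjunction by tracks. Hence the last rule of $\D D$ is $\vee\text{I}_{k}$, and we obtain $(Y \vee Z)^* \vdash_{\Ndva} Y$ or $(Y \vee Z)^* \vdash_{\Ndva} Z$.

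Finally, I close the argument by moving to the larger system. Since every $\Ndva$-derivation is an $\Ndv$-derivation, we would have $(Y \vee Z)^* \vdash_{\Ndv} Y$ (or $Z$). By Proposition~\ref{rpequiv}, $(Y \vee Z)^* \dashv\vdash_{\Ndv} Y \vee Z$, so this forces $Y \vee Z \vdash_{\Ndv} Y$ (or $Y \vee Z \vdash_{\Ndv} Z$), contradicting the standard IPC countermodel (a Kripke world forcing $Z$ but not $Y$, which falsifies $Y \vee Z \imp Y$; and conservativity of $\Ndv$ over propositional IPC transfers this non-derivability). The main obstacle, and the only step needing care, is the subformula/track analysis in the presence of primitive $\vee$: one must make sure that the chosen notion of normal form is strong enough to forbid $\vee$E with another $\vee$E as major premise, otherwise the ``no disjunction appears on a track'' argument could be circumvented by iterated $\vee$E's on the conclusion $Y \vee Z$ itself.
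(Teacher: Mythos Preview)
Your argument is correct but takes a genuinely different route from the paper. The paper proves Proposition~\ref{rpnequiv} semantically: it recalls Sobolev's Kripke-style semantics for $\Ndva$ and exhibits an explicit three-world countermodel in which $\bot\Vdash (Y\vee Z)^{*}$ but $\bot\not\Vdash Y\vee Z$. Your proof is instead purely syntactic, establishing via normalization and the subformula property that from a $\vee$-free context $\Ndva$ enjoys the disjunction property, and then deriving a contradiction. This is exactly the method the paper itself employs---not for Proposition~\ref{rpnequiv}, but for the stronger Proposition~\ref{undefdis} (undefinability of $\vee$ in $\Nda$); so your approach is in fact validated elsewhere in the paper, and has the advantage of avoiding the semantic machinery entirely. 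The semantic proof, on the other hand, makes the failure visible in a concrete model and requires no metatheoretic results about normalization in $\Ndva$.

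One small simplification: your closing detour through $\Ndv$, Proposition~\ref{rpequiv}, and conservativity over IPC is heavier than needed. Once you have $(Y\vee Z)^{*}\vdash_{\Ndva} Y$ (say), just compose with the easy fact $Z\vdash_{\Ndva}(Y\vee Z)^{*}$ (witnessed by $\Lambda X.\lambda f.\lambda g.\,g\,z$) to obtain $Z\vdash_{\Ndva} Y$, which is immediately absurd. This is precisely how the paper closes the argument in the proof of Proposition~\ref{undefdis}, and it spares you from invoking conservativity of $\Ndv$ over IPC, which you have not justified.
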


\begin{proof}See Appendix~\ref{proofprop}.
\end{proof}


\noindent In $\Ndva$ the RP-translation of $Y\vee Z$ is thus strictly weaker than $Y\vee Z$.

\begin{remark}\label{faith} An immediate consequence of Proposition~\ref{rpnequiv} concerns the faithfulness of the  RP-translation. We recall that a translation $(\cdot)^{\natural}$ from $\mathtt{N}$ to $\mathtt{N'}$ is faithful iff for all $\Gamma, A$, if $\Gamma^{\natural}\vdash_{\mathtt{N'}}A^{\natural}$ then $\Gamma\vdash_{\mathtt{N}}A$. Although the FF-translation is a faithful translation of $\Nv$ into $\Nda$ \citep{Ferreira2014}, it is \emph{not} a faithful translation of $\Ndva$ into $\Nda$ (since---keeping in mind that the FF- and the  RP-translations of formulas coincide---$((Y\vee Z)^*)^*= (Y\vee Z)^*$ and hence obviously $((Y\vee Z)^*)^*\vdash_{\Nda} (Y\vee Z)^*$), but $(Y\vee Z)^*\nvdash_{\Ndva} Y\vee Z$ by the above proposition). 

\end{remark}

\begin{remark}
In addition to the failure of faithfulness, also the disjunction property fails for the translation of disjunction in $\Nda$ in the following sense: there exist formulas $A,B$ of $\Ndva$ such that $\vdash_{\Nda}(A\vee B)^{*}$ holds but neither of $\vdash_{\Nda}A^{*}$ and $\vdash_{\Nda}B^{*}$ holds.

Let $A= \forall X\forall Y.X\supset Y$ and $B= \exists X.A\supset X$, where $\exists X.A$ is shorthand for $\forall Y.(\forall X.A\supset Y)\supset Y$. One can easily check that $\not\vdash_{\Nda}A$ and that for all variable $Z$, $\not\vdash_{\Nda}A\supset Z$, from which one can deduce that $\not\vdash_{\Nda}B$, and notice that $A^{*}=A$ and $B^{*}=B$. On the other hand one can show that $\vdash_{\Nda}(A\vee B)^{*}$ as follows:

{\small
$$
\AXC{$\stackrel{(p_{2})}{B\supset X}$}
\AXC{$\stackrel{(n)}{\forall X. (A\supset X)\supset Y }$}
\RL{$\forall$E}
\UIC{$(A\supset X)\supset Y$}
\AXC{$\stackrel{(p_{1})}{A\supset X}$}
\RL{$\supset$E}
\BIC{$Y$}
\RL{$\supset$I{\small$(n)$}}
\UIC{$(\forall X. (A\supset X)\supset Y )\supset Y$}
\RL{$\forall$I}
\UIC{$B$}
\RL{$\supset$E}
\BIC{$X$}
\doubleLine
\RL{$\supset$I{\small$(p_{1},p_{2})$}}
\UIC{$(A\supset X)\supset(B\supset X)\supset X$}
\RL{$\forall$I}
\UIC{$(A\vee B)^{*}$}
\DP
$$
}

\end{remark}

Another way of highlighting the difference between $(A\vee B)^*$ and $(A\vee B)$ in $\Ndva$  is by observing that, whereas the disjunction elimination rule warrants that  $A\vee B\vdash_{\Ndva} (A\supset C)\supset (B\supset C)\supset C$ for all $A,B, C$ in $\Ldv$, the same does not hold if one replaces $A\vee B$ with $\forall X. (A\curlyvee B)$:
\begin{proposition}\label{noio} There are  $\Ldv$-formulas $A, B, C$ such that
\begingroup\makeatletter\def\f@size{10}\check@mathfonts
$$\forall X. (A\curlyvee B)  \nvdash_{\Ndva} (A\curlyvee B)\llbracket C/X\rrbracket$$\endgroup
\end{proposition}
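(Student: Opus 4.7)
The plan is to reduce Proposition~\ref{noio} directly to Proposition~\ref{rpnequiv} by choosing witnesses that make the implication trivialise outside the $\forall$E-instantiation step. Concretely, take $A = Y$, $B = Z$ and $C = Y\vee Z$, so that the hypothesis is $\forall X.(Y\curlyvee Z) = (Y\vee Z)^{*}$ and the putative conclusion is
\[
(Y\curlyvee Z)\llbracket (Y\vee Z)/X\rrbracket \ = \ (Y\supset (Y\vee Z))\supset (Z\supset (Y\vee Z))\supset (Y\vee Z).
\]

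Suppose, for contradiction, that there were an $\mathtt{NI}^{2\vee}_{at}$-derivation $\D D$ of this formula from $(Y\vee Z)^{*}$. In $\mathtt{NI}^{2\vee}_{at}$ (which by definition extends $\Nda$ with primitive $\vee$-rules) one trivially constructs closed derivations
\[
\AXC{$\stackrel{n}{Y}$}\RL{$\vee$I$_{1}$}\UIC{$Y\vee Z$}\RL{$\supset$I $(n)$}\UIC{$Y\supset (Y\vee Z)$}\DP
\qquad\qquad
\AXC{$\stackrel{n}{Z}$}\RL{$\vee$I$_{2}$}\UIC{$Y\vee Z$}\RL{$\supset$I $(n)$}\UIC{$Z\supset (Y\vee Z)$}\DP
\]
Applying $\supset$E twice to $\D D$ with these two derivations as minor premises yields an $\mathtt{NI}^{2\vee}_{at}$-derivation of $Y\vee Z$ from the assumption $(Y\vee Z)^{*}$, i.e.\ a witness of $(Y\vee Z)^{*}\vdash_{\mathtt{NI}^{2\vee}_{at}} Y\vee Z$.

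This contradicts Proposition~\ref{rpnequiv}, so no such $\D D$ exists, and the chosen $A, B, C$ witness the statement. The only non-trivial ingredient is Proposition~\ref{rpnequiv} itself (already established in Appendix~\ref{proofprop}); no further normal-form analysis is needed here, since the essential point is that the whole obstacle to deriving $Y\vee Z$ from $(Y\vee Z)^{*}$ in $\mathtt{NI}^{2\vee}_{at}$ already resides in the step of instantiating $\forall X$ with the non-atomic formula $Y\vee Z$, which is precisely what Proposition~\ref{noio} isolates. The main (minor) subtlety to watch for is simply that $\vee$I and $\supset$I/$\supset$E are available in $\mathtt{NI}^{2\vee}_{at}$, so that the two auxiliary derivations and the final applications of $\supset$E stay inside the atomic fragment.
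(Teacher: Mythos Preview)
Your proof is correct and follows essentially the same approach as the paper: the same witnesses $A=Y$, $B=Z$, $C=Y\vee Z$ are chosen, and the contradiction with Proposition~\ref{rpnequiv} is obtained by composing the assumed instantiation-overflow derivation with the two $\vee$-injections and two applications of $\supset$E. The paper phrases this last step more abstractly, by referring back to the context $\rr{\TT C}$ built in the proof of Proposition~\ref{rpequiv} and replacing its single non-atomic $\forall$E step with the hypothetical derivation, but this amounts to exactly the construction you spell out explicitly.
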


\begin{proof} See Appendix~\ref{proofprop}
\end{proof}

\noindent
That is, in contrast to what happens in $\Nda$
,  the instantiation overflow property fails for $\forall X. (A\curlyvee B)$ in  $\Ndva$.

Given Proposition~\ref{rpnequiv} the analog of Proposition~\ref{rpequiv} cannot hold for $\Ndva$. One may therefore wonder in which sense, if at all, can one say that disjunction is definable in $\Ndva$.  Following Prawitz \cite[p.~58]{Prawitz1965}, we can distinguish between \emph{strong and weak definability} of a connective. In Prawitz' terminology, disjunction is  weakly definable in $\Nda$ if and only if  there is a faithful translation of $\Nv$ into $\Nda$,  which is the case, since the FF-translation is faithful (see Remark~\ref{faith} above);  on the other hand, disjunction is strongly definable in $\Nda$ iff  for all $A,B\in \Lv$ there is an $\Ld$-formula $C$, such that $C\dashv\vdash_{\Ndva} A\vee B$.\footnote{Note that strong definability implies the existence of a faithful translation from $\Ndva$  to $\Nda$.} Not only Proposition~\ref{rpnequiv} shows that disjunction is not strongly defined by its RP-translation, but there is in fact no other formula strongly defining it in $\Nda$:



\begin{proposition}\label{undefdis}
$\vee$ is not strongly definable in $\Nda$.
\end{proposition}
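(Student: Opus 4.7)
The plan is to argue by contradiction: suppose that for the distinct propositional variables $Y, Z$ there is a formula $C \in \Ld$ with $C \dashv\vdash_{\Ndva} Y \vee Z$, and show that this would force $Y \vdash_{\Nda} Z$ or $Z \vdash_{\Nda} Y$, which is ruled out by an elementary normalization argument on distinct atoms.

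First I would fix a normal derivation $\pi$ of $C \vdash_{\Ndva} Y \vee Z$, relying on the normalization theorem for $\Ndva$ (which extends the known normalization for $\Nda$ by adding the standard $\beta$-, $\eta$- and $\gamma$-conversions for primitive disjunction). The key observation is that, by the subformula property for normal $\Nda$-style derivations---formulated modulo atomic substitution, since $\forall\text{E}_{at}$ can only replace a bound variable by an atom---every formula occurring in $\pi$ is an atomic instance of a subformula of $C$ or of $Y \vee Z$. Since $C \in \Ld$ contains no occurrence of $\vee$ and atomic substitution cannot introduce new $\vee$-occurrences, this forces $Y \vee Z$ itself to be the only $\vee$-subformula appearing anywhere in $\pi$; in particular no formula in which $Y \vee Z$ occurs as a proper subformula (such as $(Y \vee Z) \supset F$) can appear.

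From this I would deduce that $\pi$ contains no instance of $\vee\text{E}$: any such rule would need a $\vee$-formula as its major premise, hence $Y \vee Z$, which in normal form cannot come from $\vee\text{I}$ (this would be a $\beta$-redex) and must therefore be obtained as the end of a main branch of elimination rules starting from some assumption; but such an assumption is either $C$ (which is $\vee$-free) or an assumption discharged by $\supset\text{I}$, $\forall\text{I}$, or an enclosing $\vee\text{E}$, and by the previous paragraph none of these can equal $Y \vee Z$. Hence the last rule of $\pi$ must be $\vee\text{I}_1$ or $\vee\text{I}_2$, and since no $\vee\text{E}$ or unused $\vee\text{I}$ can occur in the immediate subderivation, that subderivation is $\vee$-free and yields either $C \vdash_{\Nda} Y$ or $C \vdash_{\Nda} Z$. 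In parallel, composing $Y \vdash_{\Ndva} Y \vee Z$ (via $\vee\text{I}_1$) with the other half $Y \vee Z \vdash_{\Ndva} C$ of the assumed equivalence gives $Y \vdash_{\Ndva} C$, and the same subformula argument reduces this to $Y \vdash_{\Nda} C$; symmetrically $Z \vdash_{\Nda} C$.

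Chaining these derivations yields $Z \vdash_{\Nda} Y$ (if $\pi$ ends with $\vee\text{I}_1$) or $Y \vdash_{\Nda} Z$ (if with $\vee\text{I}_2$). But neither can hold for distinct propositional variables: a normal $\Nda$-derivation of $Y$ from $Z$ contains only formulas that are subformulas of $Y$ or $Z$, hence only the atoms $Y$ and $Z$ themselves, so it is necessarily the trivial derivation consisting of the assumption, forcing $Y = Z$. This is the desired contradiction. I expect the main obstacle to be the careful bookkeeping needed to state and apply the subformula property for $\Ndva$ in the presence of both primitive $\vee$-rules and atomic $\forall$-instantiation---in particular verifying that the $\gamma$-permutations for $\vee$ do not spoil normalization and that atomic substitution really cannot manufacture new $\vee$-subformulas; once these meta-theoretic ingredients are in place, the rest of the argument reduces to the intuitive observation that a $\vee$-free hypothesis cannot provide any handle on disjunction beyond choosing one of the disjuncts outright.
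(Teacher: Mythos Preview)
Your proposal is correct and follows essentially the same line as the paper: both arguments use normalization for $\Ndva$ to show that from a $\vee$-free hypothesis $C\in\Ld$ one can only prove $Y\vee Z$ by choosing one disjunct, then chain with $Y\vdash C$ and $Z\vdash C$ to obtain the impossible $Y\vdash Z$ or $Z\vdash Y$. The only cosmetic difference is that the paper packages the key step as an instance of Prawitz's \emph{generalized disjunction property} (for $\Gamma\subset\Ld$, if $\Gamma\vdash_{\Ndva}A\vee B$ then $\Gamma\vdash_{\Ndva}A$ or $\Gamma\vdash_{\Ndva}B$), citing Prawitz and noting that the extra case of a chain of atomic $\forall$E's is harmless, whereas you unfold that argument inline via the subformula property and additionally push the resulting derivations down into $\Nda$; this extra reduction to $\Nda$ is not needed but does no harm.
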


\begin{proof}
See Appendix~\ref{proofprop}
\end{proof}

\subsection{Equivalence-preservation vs reduction-preservation}
Although our discussion so far showed that the RP-translation coupled with $\varepsilon$ yields stronger results concerning preservation of equivalence, when one restricts the attention to $\beta$, $\eta$ and the weaker $\gamma$ conversions the  FF-translation yield a stronger result in that they  preserve reduction and not merely equivalence. As observed, Proposition~\ref{eps-preservation} cannot be strengthened by replacing equivalence with reduction, at least not in the case of $\eta$ (as it is clear by inspecting the proof of  Proposition \ref{eps-preservation} in Appendix~\ref{eps-preservation-lambda}, see in particular Proposition~\ref{etag-epspres}).\footnote{These remarks do not exclude the possibility of devising further reductions in $\Nd$ that can allow to ``simulate'' those steps of $\varepsilon$-conversions and of $\eta$-expansions needed to formulate a version of Proposition~\ref{eps-preservation} using reduction in place of equivalence, as done by Esp\'irito Santo and Ferreira in \cite{ESFarx}.}

At the same time, the fact that the predicative translations do preserve  $\eta$- and $\gamma$-equivalences can be explained using Proposition~$\ref{rp-ffe-relation}$ and Corollary~\ref{rp-ff-relation}, which essentially show that the translations into $\Nda$ encapsulate those bits of $\eta$-expansions and 
of  $\varepsilon$-conversions needed to translate $\eta$-reductions and $\gamma$-permutations of $\Nb$.

Moreover, since  $\varepsilon$-conversions allow to translate not only $\gamma$ but also $\gamma^+$-permutation (see Remark~\ref{rem:gammapiu}), all  disadvantages of the fully extensional reduction theory for disjunction (induced by $\beta$ together with $\eta^+$ or, equivalently with  $\eta$ and $\gamma^+$) are ``imported'' into $\Nd$, in particular non-confluence and non-termination (see e.g.~\cite{Lindley2007} for a discussion).

As in the case of disjunction, these problems do not exclude the possibility of considering restricted forms of $\varepsilon$-conversion (essentially, those in which only elimination rules are permuted across an application of $\forall$E), and it is not implausible to thereby obtain a well-behaving reduction theory.

It is however remarkable that the $\beta\eta\gamma^+$-equational theory has been recently shown to be the maximum consistent equational theory in $\NI$ \cite{Sche17} and this suggests the possibility of showing  that the $\beta\eta\varepsilon$-equational theory is the maximum equational theory in the fragment $\Ndrp$ of $\Nd$ (this line of research is currently being pursued by the authors  \cite{PTCSL}).   

Summing up, we can say that $\varepsilon$ is probably more useful for investigating proof-identity  rather than proof reduction, but that its connections with category theory provide an  explanation of how and why the  predicative translations work, by embedding the  syntactic results about them  into  a wider picture.

\section{Concluding remarks}\label{conc-section}

\subsection{Summary of the results}
In this paper we have shown how the category-theory-inspired framework introduced in our previous paper can be used to 
clarify the relationship between the alternative translations proposed by Ferreira, Ferreira and Esp\'irito Santo and the original Russell-Prawitz translation, and to provide semantic insights on the proof-theoretic properties of the former.

Our approach consisted in focusing on an atomizing translation from a suitable fragment of $\Nd$ (in which universal formulas correspond to the translation of polynomial connectives) into $\Nda$, and showing that such atomizations are obtained by using the  $\varepsilon$-conversions.

This made it possible to show that the predicative translations produce derivations that are equivalent to the RP-translation modulo the $\varepsilon$-conversions (hence, in semantic terms, modulo parametricity), and that their proof-theoretic properties (the preservation of $\eta$- and permutative conversions) result from the fact that $\varepsilon$-conversions express a naturality condition for the proofs denoted by $\Nd$-derivations.

%
%
%

Finally, we highlighted the trade-off between the approach based on atomic polymorphism and the more semantics-inspired approach based on the $\varepsilon$-conversions, and we argued that the former is better adapted to  investigate proof reduction, while the second is more adapted for investigating identity of proofs.

\subsection{Related and further work}
Although a generalization of our $\varepsilon$-conversions (expressing a dinaturality condition, rather than just naturality) can be formulated for any formula $\forall X.A$ of $\mathcal{L}^2$, the tight connection between the $\varepsilon$-conversions and the phenomenon of instantiation overflow underlying the present paper seems to be limited to the class of universal polynomial formulas. 
In fact, in \cite{StudiaLogica} the $\varepsilon$-conversions were defined for a broader class of formulas, that of nested p-$X$ formulas, all enjoying the instantiation overflow property. However, it does not seem possible to define a uniform atomization procedure in the style of our $\varepsilon$-atomization  for the $\Nd$-derivations in which the premises of all applications of $\forall$E are nested p-$X$ (rather than universal polynomial) formulas. For an exact characterization of the class of $\Ld$-formulas enjoying  instantiation overflow, see \cite{Ferreira2016} and \cite{Pistone2018}.
As to further directions of investigations, we observe that, 
in the extension of System $F$ with primitive propositional connectives, not only every proposition and its Russell-Prawitz translation are interderivable,  but they are actually \emph{isomorphic} (modulo the equivalence relation induced by $\beta$-, $\eta$-, $\gamma^+$-, and $\varepsilon$-equations). From  a categorical  perspective, such isomorphisms belong to a more general class of isomorphisms induced by a proof-theoretic formulation of the \emph{Yoneda lemma}. This and further categorical  aspects underlying the present work are the object of current ongoing  work by the authors \cite{PTCSL}.



\appendix\section{The system $\Ndp$}\label{systemsappendix}

We indicate by $\C{TV}$ a countably infinite set of \emph{term variables} $x,y,z,\dots$.
The terms of the system $\Ndp$ are given by the following grammar (where, given a list $\C A=\langle a_1,\ldots a_n\rangle$, we indicate  an $\C A$-indexed list of terms $t_{a_1} \ldots t_{a_n}$ as $\lust{t_a}{a}{\C A}$ and, below,  simply with $\lost{t_a}{a}{\C A}$ whenever $\C A$ is clear from the context, see Remark~\ref{index-sets}):%

{  \small\centering$
\bb{t},\bb u := \bb x \in \C{TV}\mid\bb{ \lambda x.t} \mid \bb{tu}\mid \bb{\Lambda X.t }\mid  \bb{tB
}
\mid \bb{\iota_{\dagger }^{k}\lust{t_j}{j}{{g}^{-1}(k)}}  \mid \bb{\delta_{\dagger}(t, \lust{\lust{x_j}{j}{g^{-1}(k)}.s_{k}}{k}{K})} 
$

}

That is, for every connective $\dagger$ generated by $\C I \stackrel{f}{\leftarrow}\C J \stackrel{g}{\to}\C K$, we have a family of generalized injections $\iota_{\dagger}^{k}$ indexed by elements of $\C K$, and a generalized ``case'' constructor $\delta_{\dagger}$ with $|\C K|+1 $ arguments. 

Let $\C A=\langle a_{1},\dots, a_{k}\rangle$ be a finite list. 
We will use the following abbreviated notation to indicate $\C A$-indexed sequences of $\lambda$-abstractions and applications:
if $\langle x_{a}\rangle_{a\in \C A}$ is an $\C A$-indexed list of variables, for any term $t$, we let
$\lambda \langle x_{a}\rangle_{a\in \C A}.t$ (often abbreviated as $\lambda \langle x_{a}\rangle.t$) indicate the term
$\lambda x_{a_{1}}\dots \lambda x_{a_{k}}.t$.
Similarly, if 
$\langle t_{a}\rangle$ is an $\C A$-indexed list of terms, for any term $u$, we let
$u\langle t_{a}\rangle_{a\in \C A}$ (often abbreviated as $u\langle t_{a}\rangle$) indicate the term $ut_{a_{1}}\dots t_{a_{k}}$.

As usual, by a \emph{typing context} we indicate a finite set of type declarations $x:A$ where all declared variables are distinct. We indicate typing contexts with $\Gamma,\Delta,\Sigma$.

The typing rules for $\Ndp$ are the following:


\medskip
{\centering \small$\begin{matrix}
\qquad
\AXC{}\RL{Ax}
\UIC{$\Gamma,\bb x:A\vdash\bb x:A$}
\DP
\qquad
\AXC{$\Gamma,\bb x:A\vdash\bb t:B$}\RL{$\supset$I}
\UIC{$\Gamma\vdash \bb{\lambda x.t}:A\imp B$}
\DP
\qquad
\AXC{$\Gamma\vdash \bb t:A\imp B$}
\AXC{$\Gamma\vdash \bb u:A$}\RL{$\supset$E}
\BIC{$\Gamma\vdash \bb{tu}:B$}
\DP\\ \\
\AXC{$\Gamma\vdash\bb t:A$}
\RL{$\forall$I {\small(Proviso: $X\notin FV(\Gamma)$)}}
\UIC{$\Gamma\vdash \bb{\Lambda X.t}:\forall X.A$}
\DP
\qquad
\AXC{$\Gamma\vdash \bb t:\forall X.A$}\RL{$\forall$E}
\UIC{$\Gamma\vdash \bb{tB
  }  :A\llbracket B/X\rrbracket$}
\DP\\ \\
%
%
\left\langle\AXC{$\lust{\Gamma\vdash\bb t_j:A_{f(j)}}{j}{g^{-1}(k)}$}
 \RL{$\dagger$I$_k$}\UIC{$\Gamma\vdash \bb{\iota_{\dagger}^{k}\lost{t_j}{
       }{}}:\dagger \lost{A_i}{}{}$}
\DP\right\rangle_{k\in \C K}
\qquad
\AXC{$\Gamma\vdash \bb t:\dagger\lost{A_{i}}{}{}$}
\AXC{$\left\langle \Gamma,\lust{\bb{y_j}:A_{f(j)}}{j}{g^{-1}(k)} \vdash\bb{ s_{k}}:C\right\rangle_{k\in\C K}$}\RL{$\dagger$E}
\BIC{$\Gamma\vdash \bb{\delta_{\dagger}(t,\lost{\lost{y_j}{}{}.s_{k}}{}{})}: C$}
\DP
\end{matrix}
$

}


\medskip
We write $\Gamma\vdash_{\Ndp} t:A$ iff there is a derivation of $\Gamma\vdash t:A$ using the above rules. Similarly for $\Gamma\vdash_{\Nd} t:A$ and all other systems introduced in Sections~\ref{lansys} and \ref{rp-trans}.

A \emph{term context} (or simply context when no ambiguity with typing contexts arise, indicated as $\TT T, \TT U, \dots$) is a term with a distinguished variable that we indicate as the ``hole'' $[\ ]$. Observe that this is a  generalization of the standard notion of context, since we allow the hole $[\ ]$ to  occur zero, one or (finitely) many times in a context $ \TT T$.
For any context $\rr{\TT T}$, we let 
$\TT T[t]$ (resp. $\TT T[\TT U]$) indicate usual---i.e.~non variable-capturing---substitution (short n.v.c.-substitution) of the term $t$ (resp.~context $\rr{\TT U}$) for $[\ ]$ in~$\rr{\TT T}$, and $\rr{\TT T}\{t\}$ (respectively $\rr{\TT T}\{\rr{\TT U}\}$) indicate \emph{variable-capturing} substitution (short v.c.-substitution) of the term $t$ (resp.~context $\rr{\TT U}$) for $[\ ]$ in~$\rr{\TT T}$. 
Moreover, we let $\TT T: A\vdash^{\Gamma} B$ be a shorthand for $\Gamma, x:A\vdash \TT T[x]: B$.
Note that if  $\TT T: A\vdash^{\Gamma} B$  and $\Gamma\vdash t:A$, not only $\Gamma\vdash \TT T[t]:B$ but also 
$\Gamma\vdash \TT T\{t\}:B$ hold.

We will use the following special families of contexts:
\begin{itemize}
\item the \emph{principal contexts} (indicated as $\TT C, \TT D, \dots$)  are defined by the grammar below:
$$
\TT C:= [\ ]\mid \TT Cu \mid \TT C B \mid \bb{\delta_{\dagger}(\rr{\TT C},\lost{\lost{y_j}{}{}.s_{k}}{}{})}\mid 
 \lambda x.\TT C\mid \Lambda Y.\TT C \mid \iota_{\dagger}^{k}\langle t_1,\ldots, t_{l-1}, \TT C,t_{l+1},\ldots t_{|g^-1(k)|}\rangle 
$$
\item the \emph{elimination contexts} are defined by dropping the cases $\lambda x.\TT C$, $\Lambda Y.\TT C$ and\linebreak $\iota_{\dagger}^{k}\langle t_1,\ldots, t_{l-1}, \TT C,t_{l+1},\ldots t_{|g^-1(k)|}\rangle$ from the grammar above;

\item 
the \emph{introduction contexts} are defined by dropping the cases $\TT Cu$, $ \TT C B$ and $ \bb{\delta_{\dagger}(\rr{\TT E}\lost{\lost{y_j}{}{}.s_{k}}{}{})}$ from the grammar above.
\end{itemize}

Observe that all principal contexts are contexts in the standard sense (i.e.~they all contain exactly one occurrence of the hole). Note that if $\TT C, \TT D$ are  principal contexts, then $\TT C\{\TT D\}$ and $\TT C[\TT D]$ are principal contexts  as well. 

It is easily checked that if $\TT E$ is an elimination context, then for all term $t$, $\TT E\ho t$ cannot capture variables of $t$, whence $\TT E\ho t= \TT E[t]$.

%
%
%

\section{The standard equivalence on derivations}\label{betaetagamma}

%
%

The rules of equivalence are the following:


 \medskip

  
{\small\centering  $
\begin{matrix}
  \AXC{$\Gamma, x:A\vdash t:B$}\AXC{$\Gamma\vdash s:A$}\RL{$\imp\beta$}\BIC{$\Gamma\vdash \bb{(\lambda x.t)s \ }\simeq
    \bb{ \ t\llbracket s/x\rrbracket}:B$}\DP 
\qquad \AXC{$\Gamma\vdash \bb t:A\imp B$}\RL{$\imp\eta$ {\small ($x\notin FV{(t)}$)}}
\UIC{$\Gamma\vdash \bb{t }\simeq
  \bb{ \lambda x.tx}:A\imp B$}\DP\\
\ \\
\AXC{$\Gamma \vdash t:A$}\RL{$\forall\beta$ {\small ($X\notin FV{(\Gamma)}$)} }\UIC{$\Gamma\vdash\bb{(\Lambda X.t)B \ } \simeq
  \bb{ \ t\llbracket B/X\rrbracket }:A\llbracket B/X\rrbracket$}\DP
 \qquad
 \AXC{$\Gamma\vdash \bb t:\forall X.A$}\RL{$\forall\eta$ {\small ($X\notin FV{(t)}$)}}
 \UIC{$\Gamma\vdash \bb{t} \ \simeq
   \ \bb{\Lambda X.tX
   }:\forall X.A$}
\DP\\
\\
%
%
%
%
%
%
%
%
%
\left\langle \AXC{$
\left\langle\Gamma\vdash\bb t_j:A_{f(j)}\right\rangle_{j\in g^{-1}(h)}$}
\AXC{$\left\langle \Gamma,\lust{\bb{y_j}:A_{f(j)}}{j}{g^{-1}(k)} \vdash\bb{ s_{k}}:C\right\rangle_{k\in \C K}
    $}
    \RL{$\dagger\beta_h$}\BIC{$\Gamma\vdash \bb{\delta_{\dagger}(\iota_{\dagger}^{h}\lust{t_j}{j}{g^{-1}(h)}, \langle{\lust{y_j}{j}{g^{-1}(k)}.s_{k}}\rangle_{k\in \C K}) \ }\simeq
    \bb{ \ s_{h}\llbracket\lost{t_j}{}{}/\lost{y_j}{}{}\rrbracket}:C $}\DP\right\rangle_{h\in \C K}
\\

\ \\
\AXC{$\Gamma\vdash t:\dagger\lost{A_i}{}{}$}\AXC{$\rr{\TT U}:  \dagger\lost{A_i}{}{} \vdash^{\Gamma} C$}\RL{$\dagger\eta^{+}$}\BIC{$\Gamma\vdash   \bb{\delta_{\dagger}(t, \langle{\lost{y_j}{}{}.\rr{\TT U}[\iota_{\dagger}^{k}\lost{y_j}{}{}]}\rangle_{k\in \C K}) }  \simeq
\bb{\rr{\TT U}[t]}:C$}
\DP
\end{matrix}
$

}






 \medskip\noindent
together with reflexivity, transitivity and symmetry and congruence rules.

\begin{remark}\label{gammag-lambda}
As in the case of disjunction (see, e.g.~\cite{See79} and \cite{Lindley2007}) the ``generalized'' rule $\eta^+$ for $\dagger$-connectives can be decomposed into a simple form of $\eta$-rule and a ``generalized'' permutation rule:

{\centering\small$\AXC{$\Gamma\vdash t:\dagger\lost{A_i}{}{}$}
 \RL{$\dagger\eta$}\UIC{$\Gamma\vdash 
\bb{\delta_{\dagger}(t, \langle{\lost{y_j}{}{}.\iota_{\dagger}^{k}\lost{y_j}{}{} }\rangle_{k\in \C K}) }  \simeq \bb{t} :\dagger\lost{A_i}{}{}$}\DP
  $

\medskip
  $
  \AXC{$\Gamma\vdash \bb t:\dagger(A_{i})$}
\AXC{$\left\langle\Gamma,\langle\bb{y_j}:A_{f(j)}\rangle \vdash\bb{ s_{k}}:F\right\rangle_{k\in\C K}$}
\AXC{$\TT U: F\vdash^{\Gamma}G$}
\RL{$\dagger\gamma^{+}$}
\TIC{$\Gamma\vdash\rr{\TT U}[\bb{\delta_{\dagger}(t,\langle{\lost{y_j}{}{}.s_{k}}\rangle_{k\in \C K})}]\simeq
\bb{\delta_{\dagger}(t,\langle{\lost{y_j}{}{}.\rr{\TT U}[s_{k}]}\rangle_{k\in \C K})}: G$}\DP
$

}

\medskip
The rule $\dagger\gamma$ expressing the standard permuting conversions used in establishing the subformula of normal derivations in $\NI$ is the special case of  $\dagger{\gamma^{+}}$  in which the context $\rr{\TT U}$ is an {elimination context}.

\end{remark}

We write 

{\small
$$\begin{array}{l|c|l}
    \Gamma\vdash_{\Ndp} t\simeq_{\beta} s:A &    & \mathord{\supset}\beta, \forall\beta,\dagger\beta_k\\
    \Gamma\vdash_{\Ndp} t\simeq_{\eta^{+}} s:A & \text{ iff there is a derivation of }  & \mathord{\supset}\eta, \forall\eta,\dagger\eta^+\\
\Gamma\vdash_{\Ndp} t\simeq_{\eta} s:A & \text{$\Gamma\vdash t\simeq s$ using all the typing } & \mathord{\supset}\eta, \forall\eta,\dagger\eta \\
\Gamma\vdash_{\Ndp} t\simeq_{\gamma^{+}} s:A & \text{ rules of $\Ndp$ together with} & \dagger\gamma^+ \\
\Gamma\vdash_{\Ndp} t\simeq_{\gamma} s:A & & \dagger\gamma\\
  \end{array}
  $$
}

\noindent
as well as reflexivity, transitivity, symmetry and the congruence rules. With  e.g.~$\Gamma\vdash_{\Ndp} t\simeq_{\beta\eta} s:A$ we indicate that there is a derivation using the rules underlying both $\Gamma\vdash_{\Ndp} t\simeq_{\beta} s:A$ and $\Gamma\vdash_{\Ndp} t\simeq_{\eta} s:A$, and similarly for other ``combined'' labels.

Similar notation will be used for sub-systems of $\Ndp$. If the derivations do not use  symmetry we write $\rightsquigarrow$ in place of $\simeq$.

\begin{remark}\label{rem:etacapturing}
The rules $\dagger \eta^{+}$ and $\dagger\gamma^{+}$ above are formulated using n.v.c-substitution of a term in the context $\TT U$. Observe that the variants $\dagger \eta^{+}_{\text{v.c.}}$ and $\dagger\gamma^{+}_{\text{v.c.}}$ of these rules with v.c.-substitution in place of n.v.c-substutituion are derivable from $\dagger \eta^{+}$ and $\dagger\gamma^{+}$ (\emph{together with the other conversion rules and symmetry}). The proof of the derivability of the v.c.-variants is by induction on the context $\TT U$, and the only critical cases are those in which $\TT U=\lambda x. \TT U'$, $\TT U=\Lambda X. \TT U'$ or ${\TT U}$ is a $\delta_{\dagger}$-term in which one or more of its immediate subterms in $\lost{s_k}{}{}$ are a context ${\TT U'}$. We give the details of the inductive case for $\dagger \gamma^{+}_{\text{v.c.}}$ in which 
$\TT U=\lambda x. \TT U'$ 
by informally showing how to construct a derivation of $\Gamma\vdash\rr{\TT U}\{\bb{\delta_{\dagger}(t,\langle{\lost{y_j}{}{}.s_{k}}\rangle
)}\}\simeq
\bb{\delta_{\dagger}(t,\langle{\lost{y_j}{}{}.\rr{\TT U}\{s_{k}\}}\rangle
}: G$  as follows:

{\small$$\begin{array}{c@{}l@{}c@{}l@{}l@{}}
\lambda x.\rr{\TT U'}\{\bb{\delta_{\dagger}(t,\langle{\lost{y_j}{}{}.s_{k}}\rangle)}\} & \ \ \stackrel{\text{I.H.}}{\simeq}\ \   &  \lambda x.\delta_{\dagger}(t, \langle\lost{y_j}{}{}.\rr{\TT U'}\{s_k\}\rangle) & \ \ \simeq_{\supset\beta}\ \ &\\
 & \ \ \simeq_{\supset\beta}\ \  &  \lambda x.\bb{\delta_{\dagger}(t, \langle{\lost{y_j}{}{}.\big(\lambda x.\rr{\TT U'}\ho{s_k} \big )x}\rangle)} &    \ \ \simeq_{\dagger\gamma}\ \  &   \\
     & \ \ \simeq_{\dagger\gamma}\ \   & \lambda x.\bb{\delta_{\dagger}(t, \langle{\lost{y_j}{}{}.\lambda x.\rr{\TT U'}\ho{s_k}}\rangle) }x & \ \rightsquigarrow_{\supset\eta}\ \ &   \bb{\delta_{\dagger}(t, \langle{\lost{y_j}{}{}.\lambda x.\rr{\TT U'}\{s_k\}}\rangle) }  \end{array}$$}

\noindent The other critical  cases for $\dagger\gamma^+$ and those for $\dagger\eta^+$ are similar.
\end{remark}

\begin{remark}\label{etaconj}
Assuming  the standard elimination rules for $\wedge$ are defined as $\pi_1(t) := \delta_{\!\mathord{\wedge}\!}(t,y_1y_2 .y_1)$ and $\pi_2(t) := \delta_{\!\mathord{\wedge}\!}(t, y_1y_2.y_2)$ (and taking  $\iota_{\!\mathord{\wedge}\!}(t_1,t_2)$ as prefix notation for  the standard pairing), we can derive the standard $\eta$-rule for conjunction $ \iota_{\!\wedge\!} (\pi_1 (t), \pi_2 (t))  \simeq t$ as follows:

\vspace{-1ex}
{\small
$$\iota_{\!\wedge\!}(\delta_{\!\mathord{\wedge}\!}(t,y_1y_2.y_1), \delta_{\!\mathord{\wedge}\!}(t,y_3y_4.y_4) )  \ \ \rightsquigarrow_{\dagger\gamma^+}\ \ \qquad\qquad\qquad\qquad\qquad\qquad\qquad\qquad\qquad\qquad\qquad\qquad $$
$$\begin{array}{c@{}l@{}c@{}l@{}}
  & \ \ \rightsquigarrow_{\dagger\gamma^+}\ \   &  \delta_{\!\mathord{\wedge}\!}(t,y_1y_2. \iota_{\!\wedge\!}\big(y_1,  \delta_{\!\mathord{\wedge}\!}(t,y_3y_4.y_4)\big))   & \ \ \rightsquigarrow_{\dagger\gamma^+}\\
 & \ \ \rightsquigarrow_{\dagger\gamma^+}\ \  & \delta_{\!\mathord{\wedge}\!}(t,y_1y_2.\delta_{\!\mathord{\wedge}\!}(t,y_3y_4. \iota_{\!\wedge\!}\big(y_1,y_4\big)))  &    \ \ \simeq_{\dagger\eta}\\
     & \ \ \simeq_{\dagger\eta}\ \   & \delta_{\!\mathord{\wedge}\!}(\Big(\delta_{\!\wedge\!}(t,z_1z_2.\iota_{\!\wedge\!}(z_1,z_2)) \Big),y_1y_2.\delta_{\!\mathord{\wedge}\!}(\Big(\delta_{\!\wedge\!}(t,z_1z_2.\iota_{\!\wedge\!}(z_1,z_2)) \Big),y_3y_4. \iota_{\!\wedge\!}\big(y_1,y_4\big)))  & \ \rightsquigarrow_{\dagger\gamma^+}\\
&\ \ \rightsquigarrow_{\dagger\gamma^+} \ \ & \delta_{\!\wedge\!}(t,z_1z_2.\delta_{\!\mathord{\wedge}\!}(\Big( \iota_{\!\wedge\!}(z_1,z_2)  \Big),y_1y_2.\delta_{\!\mathord{\wedge}\!}(\Big(\iota_{\!\wedge\!}(z_1,z_2) \Big),y_3y_4. \iota_{\!\wedge\!}\big(y_1,y_4\big)))) & \ \ \rightsquigarrow_{\dagger\beta}\\
& \ \ \rightsquigarrow_{\dagger\beta}\ \  & \delta_{\!\wedge\!}(t,z_1z_2.\delta_{\!\mathord{\wedge}\!}(\Big( \iota_{\!\wedge\!}(z_1,z_2)  \Big), y_1y_2.\iota_{\!\wedge\!}(y_1,z_2)))  & \ \ \rightsquigarrow_{\dagger\beta}\\  
& \ \ \rightsquigarrow_{\dagger\beta}\ \  &  \delta_{\!\wedge\!}(t,z_1z_2.\iota_{\!\wedge\!}(z_1,z_2)  ) & \ \ \rightsquigarrow_{\dagger\eta}\end{array}$$
$$\qquad\qquad\qquad\qquad\qquad\qquad\qquad\qquad\qquad\qquad\qquad\qquad\qquad\qquad\qquad\qquad\qquad\qquad \ \ \rightsquigarrow_{\dagger\eta} \ \ t$$
}

\noindent
where the  three applications of $\dagger\gamma^+$ can be described as:
\begin{itemize}
\item ${\TT U}[\delta_{\!\mathord{\wedge}\!}(t,y_1y_2.y_1)]    \rightsquigarrow_{\gamma^+}\   \delta_{\!\mathord{\wedge}\!}(t,y_1y_2.{\TT U}[y_1])$ with ${\TT U}:= \iota_{\!\wedge\!}([\ ],   \delta_{\!\mathord{\wedge}\!}(t,y_3y_4.y_4) )$; 
\item ${\TT U}[\delta_{\!\mathord{\wedge}\!}(t,y_3y_4.y_4)]    \rightsquigarrow_{\gamma^+}\   \delta_{\!\mathord{\wedge}\!}(t,y_3y_4.{\TT U}[y_4])$ with ${\TT U}:= \iota_{\!\wedge\!}(y_1, [\ ])$; 
 \item ${\TT U}[\delta_{\!\mathord{\wedge}\!}(t,z_1z_2.\iota_{\!\wedge\!}(z_1,z_2))]    \rightsquigarrow_{\gamma^+}\   \delta_{\!\mathord{\wedge}\!}(t,z_1z_2.{\TT U}[\iota_{\!\wedge\!}(z_1,z_2)])$ with ${\TT U}:= \delta_{\!\mathord{\wedge}\!}([\ ],y_1y_2.\delta_{\!\mathord{\wedge}\!}([\ ],y_3y_4. \iota_{\!\wedge\!}\big(y_1,y_4\big)))$.

\end{itemize}
\end{remark}

%
%
%
%
%

\section{Weak expansion and $A$-expansion}\label{cexp-appendix}




  \begin{definition}\label{ie-context} 
 For any $A\in \mathcal{L}^2$  we define a family of atomic elimination contexts $\mathsf{Elim}(A)$ and a family of introduction contexts $\mathsf{Intro}(A)$ by induction as follows:
    \begin{itemize}
      \item If $A=Z$, then $\mathsf{Elim}(A)=\mathsf{Intro}(A)=\{[\ ]\}$;
      
\item If $A= B\to C$, then $\mathsf{Elim}(A)=\{ \rr{ \TT E}[ [\  ]x]\mid x\text{ not free in }\rr{\TT E} \text{ and }\rr{\TT E}\in \mathsf{Elim}(C)\}$ and
$\mathsf{Intro}(A)=\{ \lambda  x.\rr{ \TT I}\mid \rr{\TT I}\in \mathsf{Intro}(C)\}$ 

%
%
\item If $A=\forall Z.C$, then  
$\mathsf{Elim}(A)=\{ \rr{ \TT E}[ [\  ]X]\mid X \text{ not free in }\rr{\TT E} \text{ and }\rr{\TT E}\in \mathsf{Elim}(C)\}$ and
$\mathsf{Intro}(A)=\{ \Lambda  X.\rr{ \TT I}\mid \rr{\TT I}\in \mathsf{Intro}(C)\}$ 

%
\end{itemize}

An \emph{expansion pair for $A$} is a pair of contexts $(\rr{\TT{El}},\rr{ \TT{In}})$, where $\rr{\TT{El}}\in  \mathsf{Elim}(A), \rr{\TT{In}}\in \mathsf{Intro}(A)$ and $\rr{\TT{In}}\ho{\rr{\TT{El}}}\simeq_{\eta}[\ ]$.

\end{definition}

We list some useful and easily established facts about introduction and elimination contexts (recalling that for all elimination context $\TT E$ and term $t$, $\TT E\{t\}=\TT E[t]$):
%
\begin{fact}\label{etainel} If $\Gamma \vdash_{\Nd} t:A$ then for all expansion pairs $(\rr{\TT{El}},\rr{ \TT{In}})$ for $A$, 
 $\Gamma\vdash t \simeq_{\eta} \bb{\rr{\TT{In}}} \ho{\bb{\rr{\TT{El}}}[{t}]}:A$
\end{fact}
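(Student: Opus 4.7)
The statement looks almost tautological once the definitions are unpacked, so my plan is to derive it directly from the expansion-pair condition, with the only real work being to justify that ``plugging $t$ into the hole'' commutes with $\simeq_\eta$ on both sides.

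The plan is as follows. By Definition~\ref{ie-context}, the hypothesis that $(\rr{\TT{El}},\rr{\TT{In}})$ is an expansion pair for $A$ gives us the context-level equivalence
\[
  \rr{\TT{In}}\ho{\rr{\TT{El}}}\ \simeq_\eta\ [\ ].
\]
The two sides are one-holed contexts, so I would read this as asserting that for every term $u$ of appropriate type, $(\rr{\TT{In}}\ho{\rr{\TT{El}}})\ho{u}\simeq_\eta u$ (this is just congruence of $\simeq_\eta$ under substitution into a context, applied once to the outermost hole). Instantiate this with $u:=t$ to obtain
\[
  (\rr{\TT{In}}\ho{\rr{\TT{El}}})\ho{t}\ \simeq_\eta\ t.
\]

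Next I would simplify the left-hand side. Substitution into contexts is associative in the obvious sense, so $(\rr{\TT{In}}\ho{\rr{\TT{El}}})\ho{t}=\rr{\TT{In}}\ho{\rr{\TT{El}}\ho{t}}$. Moreover, since $\rr{\TT{El}}\in\mathsf{Elim}(A)$ is an elimination context and hence---as already observed just above Definition~\ref{ie-context}---contains no binders, variable-capturing and non-variable-capturing substitution into $\rr{\TT{El}}$ coincide: $\rr{\TT{El}}\ho{t}=\rr{\TT{El}}[t]$. Combining these two identities yields $\rr{\TT{In}}\ho{\rr{\TT{El}}[t]}\simeq_\eta t$, which is the claim.

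The main obstacle, such as it is, is really only notational: one must be comfortable reading the expansion-pair condition $\rr{\TT{In}}\ho{\rr{\TT{El}}}\simeq_\eta[\ ]$ as a statement about contexts and then correctly handle the two substitutions. An alternative, slightly more verbose route would be an induction on the structure of $A$ mirroring the inductive clauses of Definition~\ref{ie-context} (base $A=Z$ where $\rr{\TT{El}}=\rr{\TT{In}}=[\ ]$ trivializes, and the cases $A=B\imp C$ and $A=\forall Z.C$ invoke $\imp\eta$ resp.\ $\forall\eta$ together with the induction hypothesis on $C$). But given that the expansion-pair condition already \emph{is} the equation we need up to the substitution of $t$, the direct argument above is cleaner and I would prefer it.
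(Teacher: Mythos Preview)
Your proposal is correct. The paper does not actually spell out a proof of this fact, listing it among ``useful and easily established facts'' immediately after Definition~\ref{ie-context}; your direct argument from the expansion-pair condition $\rr{\TT{In}}\ho{\rr{\TT{El}}}\simeq_\eta[\ ]$ (together with the observation that $\rr{\TT{El}}\ho{t}=\rr{\TT{El}}[t]$ since $\rr{\TT{El}}$ contains no binders) is exactly the intended reconstruction, and the alternative induction on $A$ you sketch is the obvious way to make it fully explicit.

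One small terminological remark: what you call ``congruence of $\simeq_\eta$ under substitution into a context'' is really the admissibility of substitution for the equational theory (if $\Gamma,x{:}A\vdash u\simeq_\eta v:B$ and $\Gamma\vdash t:A$ then $\Gamma\vdash u\llbracket t/x\rrbracket\simeq_\eta v\llbracket t/x\rrbracket:B$), not one of the primitive congruence rules; this is standard and unproblematic, but worth naming precisely. Also, the passage from n.v.c.\ substitution (which is what substitutivity gives you) to the desired v.c.\ form $\rr{\TT{In}}\ho{\rr{\TT{El}}[t]}$ is harmless provided the bound variables of $\rr{\TT{In}}$ are chosen fresh for $t$, which is the standing $\alpha$-convention and is in any case covered by Fact~\ref{alphaexpa}.
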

\begin{fact}\label{expa2} For all expansion pairs $(\rr{\TT{El}},\rr{ \TT{In}})$ for $A\supset B$ there exists $x\in \C{TV}$ such that
$\rr{\TT{El}}= \rr{\TT E}[[\ ]x]$ and $\rr{\TT{In}}=\lambda x.\rr{\TT I}$, where $(\rr{\TT E},\rr{\TT I})$ is an expansion pair for $B$.
Similarly, for all expansion pairs $(\rr{\TT{El}},\rr{ \TT{In}})$ for $\forall X.A$ there exists $X\in \C{V}$ such that
$\rr{\TT{El}}= \rr{\TT E}[[\ ]X]$ and $\rr{\TT{In}}=\Lambda X.\rr{\TT I}$, where $(\rr{\TT E},\rr{\TT I})$ is an expansion pair for $A$.

\end{fact}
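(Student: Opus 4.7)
The plan is to prove the statement by unfolding the inductive definitions of $\mathsf{Elim}(A)$ and $\mathsf{Intro}(A)$ from Definition~\ref{ie-context} and then verifying that the composition condition on the outer pair forces the analogous condition on the inner pair. For the case $A = B \supset C$, the grammar of $\mathsf{Elim}(A)$ immediately yields $\TT{El} = \TT E[[\ ]x]$ with $\TT E \in \mathsf{Elim}(C)$ and $x$ not free in $\TT E$; similarly the grammar of $\mathsf{Intro}(A)$ yields $\TT{In} = \lambda y.\TT I$ with $\TT I \in \mathsf{Intro}(C)$. Using $\alpha$-conversion and the freshness of $x$ with respect to $\TT E$ (and, by choosing a fresh name if necessary, also with respect to $\TT I$), we align $y$ with $x$, giving the stated structural form.

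The remaining task is to check $\TT I\{\TT E\} \simeq_\eta [\ ]$, so that $(\TT E,\TT I)$ is itself an expansion pair for $C$. Unfolding the hypothesis $\TT{In}\{\TT{El}\} \simeq_\eta [\ ]$ under the decomposition above gives $\lambda x.\TT I\{\TT E[[\ ]x]\} \simeq_\eta [\ ]$. Since $x$ is not free in the hole $[\ ]$, the $\eta$-law yields $[\ ] \simeq_\eta \lambda x.[\ ]x$; combining these two equivalences and stripping the outer $\lambda x.$ by congruence produces $\TT I\{\TT E[[\ ]x]\} \simeq_\eta [\ ]x$. Because $\TT E$ is an elimination context we have $\TT E[[\ ]x] = \TT E\{[\ ]x\}$, and because $x$ occurs neither in $\TT E$ nor in $\TT I$, the variable $x$ plays the role of a generic placeholder for the argument supplied by the hole; this allows one to ``strip'' the trailing application by $x$ on both sides and conclude $\TT I\{\TT E\} \simeq_\eta [\ ]$.

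The case $A = \forall X.B$ is entirely parallel: the grammar forces $\TT{El} = \TT E[[\ ]X]$ and $\TT{In} = \Lambda X.\TT I$ with $(\TT E,\TT I)\in\mathsf{Elim}(B)\times\mathsf{Intro}(B)$, the type variable $X$ being aligned by $\alpha$-conversion, and the analogous $\forall\eta$-step $[\ ]\simeq_\eta \Lambda X.[\ ]X$ together with congruence reduces the composition condition on $(\TT{El},\TT{In})$ to the composition condition on $(\TT E,\TT I)$.

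The main obstacle is the last manipulation in the first case, namely passing from $\TT I\{\TT E[[\ ]x]\} \simeq_\eta [\ ]x$ to $\TT I\{\TT E\} \simeq_\eta [\ ]$; this requires formalising the sense in which the variable $x$ is truly generic and can be ``cancelled'' on both sides, which depends on the fact that $x$ does not occur in $\TT E$ or $\TT I$ except through the hole, together with a routine compatibility argument for $\eta$-equivalence on contexts (treating the hole as a distinguished free variable). All other steps are direct consequences of the definitions.
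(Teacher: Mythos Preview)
The paper does not give a proof of this Fact; it is listed among ``easily established facts'' and left to the reader. So there is no paper argument to compare against, and the question is whether your attempt stands on its own.

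Your decomposition via Definition~\ref{ie-context} is correct, but two of the subsequent steps are not justified as written. First, ``stripping the outer $\lambda x$ by congruence'' is not congruence: the congruence rule runs the other way ($M\simeq N \Rightarrow \lambda x.M\simeq\lambda x.N$). The implication you need does hold for $\simeq_\eta$, but it requires an appeal to $\eta$-normal forms or confluence, not congruence. Second, the $\alpha$-conversion you use to ``align $y$ with $x$'' is suspect: the expansion-pair condition is phrased with \emph{variable-capturing} substitution $\{\cdot\}$, under which $\alpha$-variants of $\TT{In}$ are genuinely different contexts, so renaming the bound variable of $\TT{In}$ is not a free move. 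In fact the point is the opposite: the condition $\TT{In}\{\TT{El}\}\simeq_\eta[\ ]$ \emph{forces} the outermost binder of $\TT{In}$ to be the very variable $x$ appearing in $\TT{El}$.

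The clean argument makes this explicit. Any $\TT I\in\mathsf{Intro}(B)$ is a string of abstractions $\lambda w_2\cdots\lambda w_n.[\ ]$ (with $\Lambda$'s where dictated by $B$), and any $\TT E\in\mathsf{Elim}(B)$ is a string of applications $[\ ]v_2\cdots v_n$; hence $\TT{In}\{\TT{El}\}=\lambda y\,\lambda w_2\cdots\lambda w_n.\,[\ ]\,x\,v_2\cdots v_n$. This $\eta$-reduces to $[\ ]$ precisely when the binder list $y,w_2,\dots,w_n$ coincides with the argument list $x,v_2,\dots,v_n$ (pairwise, with the binders distinct and different from $[\ ]$). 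In particular $y=x$, and simultaneously $\TT I\{\TT E\}=\lambda w_2\cdots\lambda w_n.[\ ]v_2\cdots v_n\simeq_\eta[\ ]$ drops out for free. Your ``routine compatibility argument'' and the acknowledged ``cancel $x$'' step would, if carried out, reconstruct exactly this structural analysis---so it is simpler to do it directly rather than via the algebraic manipulations you sketch. The $\forall X.A$ case is then genuinely parallel.
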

\begin{fact}\label{alphaexpa}
If $(\rr{\TT E}_{1}, \rr{\TT I}_{1})$ and $(\rr{\TT E}_{2}, \rr{\TT I}_{2})$ are two expansion pairs for $A$, then for any 
$\rr{\TT U}: \at{A}\vdash^{\Gamma}\at{A}$ and term $\Gamma\vdash t:A$, if no variable free in $\rr{\TT U}$ and $t$ is bound in either $\rr{\TT I}_{1}$ or $\rr{\TT I}_{2}$, then $\rr{\TT I}_{1}\ho{\rr{\TT U}[ \rr{\TT E}_{1}[t]]} =
\rr{\TT I}_{2}\ho{\TT U[ \rr{\TT E}_{2}[{t}]]}$.
\end{fact}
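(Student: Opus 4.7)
\textbf{Proof plan for Fact~\ref{alphaexpa}.} The plan is to proceed by structural induction on the formula $A$, using Fact~\ref{expa2} to peel off one layer of the introduction and elimination contexts at each inductive step, and then invoking the induction hypothesis on the smaller formula $B$ or $C$.

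For the base case $A = Z$ atomic, Definition~\ref{ie-context} forces $\rr{\TT E}_1 = \rr{\TT I}_1 = \rr{\TT E}_2 = \rr{\TT I}_2 = [\ ]$, so both sides of the equation literally reduce to $\rr{\TT U}[t]$. For the implicational step $A = B \supset C$, Fact~\ref{expa2} gives variables $x_1, x_2$ and expansion pairs $(\rr{\TT E}_i', \rr{\TT I}_i')$ for $C$ such that $\rr{\TT E}_i = \rr{\TT E}_i'[[\ ]x_i]$ and $\rr{\TT I}_i = \lambda x_i.\rr{\TT I}_i'$. Since $\lambda$-terms are identified up to $\alpha$-equivalence, I would first $\alpha$-rename so that $x_1 = x_2 = x$ for a single variable $x$ chosen fresh relative to $\rr{\TT U}$, $t$, and the bound variables of $\rr{\TT I}_1', \rr{\TT I}_2'$; the hypothesis that no variable free in $\rr{\TT U}$ or $t$ is bound in $\rr{\TT I}_i$ guarantees that this renaming is harmless, and it also ensures that the v.c.-substitution $\{-\}$ commutes with the outer $\lambda$, i.e.
$$\rr{\TT I}_i\{\rr{\TT U}[\rr{\TT E}_i[t]]\} \;=\; (\lambda x.\rr{\TT I}_i')\{\rr{\TT U}[\rr{\TT E}_i'[tx]]\} \;=\; \lambda x.\rr{\TT I}_i'\{\rr{\TT U}[\rr{\TT E}_i'[tx]]\}.$$
Because $\at{B\supset C}=\at{C}$, the context $\rr{\TT U}$ is of the correct shape $\at{C}\vdash^{\Gamma,x:B}\at{C}$ for an application of the induction hypothesis to the term $tx : C$ and the two expansion pairs $(\rr{\TT E}_i',\rr{\TT I}_i')$ for $C$. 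That yields $\rr{\TT I}_1'\{\rr{\TT U}[\rr{\TT E}_1'[tx]]\} = \rr{\TT I}_2'\{\rr{\TT U}[\rr{\TT E}_2'[tx]]\}$, and wrapping both sides in $\lambda x.$ gives the desired equality.

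The universal case $A = \forall Z.C$ is handled in exactly the same way, with Fact~\ref{expa2} yielding $\rr{\TT E}_i = \rr{\TT E}_i'[[\ ]X_i]$ and $\rr{\TT I}_i = \Lambda X_i.\rr{\TT I}_i'$, $\alpha$-renaming the type variables to a common fresh $X$, and applying the induction hypothesis to $tX : C[X/Z]$ (noting again that $\at{\forall Z.C}=\at{C}$, so the typing of $\rr{\TT U}$ is preserved).

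I expect the only real friction point to be bookkeeping the freshness conditions in the inductive step: one has to check that after the $\alpha$-renaming, the hypothesis ``no variable free in $\rr{\TT U}$ or in $tx$ is bound in $\rr{\TT I}_i'$'' still holds, so that the induction hypothesis applies. This follows directly from the original no-capture hypothesis on $\rr{\TT I}_i = \lambda x.\rr{\TT I}_i'$ together with the freshness of the chosen $x$ (resp.~$X$), but it is the place where care is needed. Once this is settled, the argument is a clean structural recursion on $A$.
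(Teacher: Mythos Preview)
Your proposal is correct and is precisely the natural structural induction on $A$ that the paper has in mind; the paper itself does not spell out a proof, listing this among ``some useful and easily established facts'' and leaving the details to the reader. Your handling of the $\alpha$-renaming and the freshness bookkeeping is exactly what is needed to make the induction go through.
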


In the following we will suppose fixed for any formula $A\in \C L^{2}$ an expansion pair $(\rr{\TT{El}}_{A}, \rr{\TT{In}}_{A})$.

\begin{definition}[Weak expansion and $A$-expansion]\label{cexp-lambda}
For all $A\in \C L^{2}$, the \emph{weak expansion} of $A$ is the context $\rr{\TT{In}}_{A}\ho{\rr{\TT{El}}_{A}}$. 
Moreover, if $A$ is sp-$X$, for all 
$\TT U: B\vdash^{\Gamma}C$, we let 
the \emph{$A$-expansion of $\rr{\TT U}$} be 
 $\bb{\Fun{X}{A}{\rr{\TT U}}}=\rr{\TT{In}}_{A}\ho{\rr{\TT U}[\rr{\TT{El}}_{A}[x]]}$.

%
%
\end{definition}%
  
Observe that by Fact \ref{alphaexpa}, the context $\Fun{X}{A}{\TT U}$ does not depend on the chosen expansion pair for $A$.
Moreover, it is easily checked that for all sp-$X$ formula $A\in \C L^{2}$, if  
$\rr{\TT U}: B \vdash^{\Gamma}C$ then $\Fun{X}{A}{\rr{\TT U}}: 
A\llbracket B/X\rrbracket \vdash^{\Gamma} A\llbracket C/X\rrbracket
$.
%
%
%
%
%


 \begin{remark}In \cite{StudiaLogica} we defined the notion of $A$-expansion in a more direct manner for the broader class of strictly positive formulas (and not, as done here, strongly positive, see Remark~\ref{rem:spX}). The interested reader can easily check that the two definitions coincide in the case of strongly positive formulas.
 \end{remark}

\section{The RP-translation}\label{rp-appendix}

\begin{definition}[RP-translation]\label{rp-lambda}
Given the RP-translation of formulas (Definition~\ref{def:rp1} in Section~\ref{rp-trans}), for every term $u$ such that $\Gamma\vdash_{\Ndp} u:A$ we define a term $u^*$  as follows:

\medskip
{\centering \small\begin{tabular}{@{}l|l}
  \parbox{.3\textwidth}{
  $x^{*} =x$

  $(\lambda x.t)^* = \lambda x. t^*$

  $(ts)^* = t^*s^*$

  $(\Lambda X.t)^* = \Lambda X. t^*$

                    $(tB
                    )^* = t^*B^*
                    $
  }
  &
     \parbox{.6\textwidth}{

     
    $
     \bb{ (\iota_{\dagger}^{k_{0}}\langle t_j\rangle )^{\rop} } =\bb{  \Lambda X.\lambda \langle x_{k}\rangle.x_{k_{0}}\lost{t_j^{\rop}}{j}{g^{-1}(k)}}
    $

     \medskip
    $\bb{ (\delta_{\dagger}
    (t, \lost{\lost{y_j}{}{}.s_{k}}{}{}))^{\rop}}  =\bb{  t^{\rop}A^{\rop
    } \lost{\lambda \lost{y_j}{j}{g^{-1}}.s_{k}^{\rop}}{k}{\C K}}$

     }
                  \end{tabular}

                }
              \end{definition}

              \noindent
The following fact is easily checked by induction on a derivation of $\Gamma\vdash_{\Ndp} u:A$:                            \begin{fact}\label{rp-derpres}If $\Gamma\vdash_{\Ndp} u:A$, then    $\Gamma^*\vdash_{\Nd} u^*:A^*$ and if $\Gamma\vdash_{\Np} u:A$, then    $\Gamma^*\vdash_{\Ndrp} u^*:A^*$.
              \end{fact}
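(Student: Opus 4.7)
The plan is to proceed by induction on the typing derivation of $\Gamma\vdash_{\Ndp} u:A$. Before handling the cases, I would establish a short substitution lemma stating that $(A\llbracket B/X\rrbracket)^{*} = A^{*}\llbracket B^{*}/X\rrbracket$ for every $A,B\in\Ldp$. This is proved by a straightforward induction on $A$: the variable, implication and universal cases are immediate from the clauses in Definition~\ref{def:rp1}, and for a polynomial formula $\dagger\lost{A_i}{i}{\C I}$ one observes that substitution commutes with the nested $\supset$-structure used to form $\forall X.\EXP{X}{k\in\C K}{\EXP{X}{j\in g^{-1}(k)}{A_{f(j)}^{*}}}$, since the bound variable $X$ can be chosen fresh.

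With the substitution lemma in hand, I would go through the typing rules. The cases for $\mathrm{Ax}$, $\supset$I, $\supset$E and $\forall$I are routine: the term translation commutes with the syntactic constructor, and for $\forall$I the proviso $X\notin FV(\Gamma)$ transfers to $X\notin FV(\Gamma^{*})$, since the translation introduces no new free type variables. The $\forall$E case is where the substitution lemma is used: from $\Gamma^{*}\vdash_{\Nd} t^{*}:\forall X.A^{*}$ one obtains $\Gamma^{*}\vdash_{\Nd} t^{*}B^{*}:A^{*}\llbracket B^{*}/X\rrbracket = (A\llbracket B/X\rrbracket)^{*}$.

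The heart of the verification lies in the two rules for polynomial connectives. For $\dagger$I$_{k_{0}}$, the inductive hypothesis gives $\Gamma^{*}\vdash_{\Nd} t_{j}^{*}:A_{f(j)}^{*}$ for each $j\in g^{-1}(k_{0})$; one then checks that the term $\Lambda X.\lambda\langle x_{k}\rangle.x_{k_{0}}\lost{t_{j}^{*}}{j}{g^{-1}(k_{0})}$ has type $\forall X.\EXP{X}{k\in\C K}{\EXP{X}{j\in g^{-1}(k)}{A_{f(j)}^{*}}}$, which is exactly $(\dagger\lost{A_i}{i}{\C I})^{*}$. For $\dagger$E, using the inductive hypotheses on $t$ and on each $s_{k}$, one instantiates the $\forall$-quantifier of $t^{*}$'s type with $C^{*}$ and then applies the abstractions $\lambda\lost{y_j}{}{}.s_{k}^{*}$ one after another; again the substitution lemma guarantees that the intermediate types match.

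Finally, for the second clause concerning $\Np$ and $\Ndrp$, the only additional observation needed is that the translation of a $\Lp$-formula always yields a formula of $\Ldrp$: by induction on $A\in\Lp$, since the only quantifiers introduced by $^{*}$ are those produced by the polynomial clause and these are universal polynomial by construction, while $X^{*}=X$ and $(A\supset B)^{*}=A^{*}\supset B^{*}$ preserve membership in $\Ldrp$. The induction on the derivation then stays inside $\Ndrp$, since every rule applied in the translated derivation acts on $\Ldrp$-formulas. I expect no real obstacle here; the only mildly tedious step is bookkeeping the index sets $\C I,\C J,\C K$ and the functions $f,g$ in the $\dagger$I/$\dagger$E cases, which is why the paper merely notes that the fact is easily checked.
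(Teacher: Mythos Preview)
Your proposal is correct and follows exactly the approach the paper indicates: an induction on the typing derivation of $\Gamma\vdash_{\Ndp} u:A$. You have simply spelled out the details that the paper leaves implicit, including the substitution lemma $(A\llbracket B/X\rrbracket)^{*}=A^{*}\llbracket B^{*}/X\rrbracket$ needed for the $\forall$E case and the observation that $\Lp$-formulas translate into $\Ldrp$, which keeps the second clause inside $\Ndrp$.
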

\begin{remark}
The clauses for $\iota$- and $\delta$-terms generalize the standard translation of disjunction and conjunction constructors in System F (where  $\iota_{\wedge}(t_1,t_2)$ is prefix notation for  the standard pairing).

\medskip
{\centering\small $\begin{array}{ll} \bb{ (\iota_{\mathord{\vee} }^{u}t)^{\rop} } =\bb{  \Lambda X.\lambda x_1x_2.x_{u}t^{\rop}}\ ({u=1,2 })& \bb{ (\delta_{\vee}
                                                                                                                                                                           (t, y.s_{1}, y.s_2))^{\rop}}  =\bb{  t^{\rop}A^{\rop
                                                                                                                                                                           } (\lambda y.s_{1}^{\rop}) (\lambda y.s_{2}^{\rop})}\\
                     \bb{ (\iota_{{\!}\mathord{{\wedge}}{\!}}(t_1, t_2))^{\rop} } =\bb{  \Lambda X.\lambda x.xt_1^{\rop}t_2^{\rop}}\ & \bb{ (\delta_{\wedge}
                                                                                                                                       (t, y_1y_2.s))^{\rop}}  =\bb{  t^{\rop}A^{\rop
                                                                                                                                       } \lambda y_1y_2.s^{\rop}}
        \end{array}
        $

      }     
    \end{remark}

    \begin{remark}\label{rp-context} 
    It can be checked that if $\TT C: A\vdash^{\Gamma}B$ is a context in the language of $\Ndp$, then
    $\TT C^{*}: A^{*}\vdash^{\Gamma^{*}}B^{*}$ is a context in the language of $\Nd$. Moreover, 
     if $\rr{\TT E}$ is an elimination context then $\rr{\TT E}^*$ is also an elimination context. \end{remark}
%
%
     

      \section{The $\varepsilon$-equation}\label{epsilonappendix}


We write $\Gamma\vdash t\simeq_{\varepsilon}u:A$ iff there is a derivation of $\Gamma\vdash t\simeq u$ using the rules of congruence, symmetry, transitivity, reflexivity  and any instance of the rule below:
%

\medskip
{\centering\small
$
\AXC{$\Gamma\vdash t: (\dagger\langle A_i\rangle)^*$}
\AXC{$\left\langle \Gamma\vdash u_{k}: (\EXPO{X}{j\in g^{-1}(k)  }{A_{f(j)}^{\phantom{fj}*}})\llbracket C/X\rrbracket  \right\rangle_{k\in \C K} $}
\AXC{$\rr{\TT U}:C\Rightarrow^{\Gamma}  D$}
\RL{$\varepsilon$}\TrinaryInfC{$
\Gamma\vdash 
  \rr{\TT U}[ tC
  \langle{u_k}\rangle_{k\in\C K}] \simeq
  tD
  \left\langle
  \Fun{X}{(\EXPO{X}{j\in g^{-1}(k)}{A_{f(j)}^{\phantom{fj}*}})}{
  \rr{\TT U}}[u_{k}]\right\rangle_{k\in\C K}
: D$}
\DP
$

}

For the use of e.g.~$\simeq_{\eta\varepsilon}$ and $\rightsquigarrow_{\varepsilon}$ we adopt the same conventions introduced at the end of Appendix~\ref{betaetagamma}.

\begin{remark}\label{rem:epsicapturing}
  As for the rules $\dagger\eta^{+}$ and $\dagger\gamma^{+}$, we formulated the rule $\varepsilon$ using n.v.c.-substitution  of a term for the hole of  $\TT U$.  Similarly to the other cases (see Remark \ref{rem:etacapturing}), a variant of $\varepsilon$  with v.c.-substituion in place of n.v.c.-substitution  can be derived from the rule $\varepsilon$ along with \emph{$\beta$- and $\eta$-conversions and symmetry}.

\end{remark}



\section{Proof of Proposition~\ref{eps-preservation}}\label{eps-preservation-lambda}
We will establish the stronger statement below.

\begin{proposition}\label{eps-preservation2}
For all $u,v$ if $\Gamma\vdash_{\Ndp} u\simeq_{\beta\eta^{+}}v:A$ then $\Gamma^*\vdash_{\Nd} u^{*}\simeq_{\beta\eta\varepsilon} v^{*}:A^*$.
\end{proposition}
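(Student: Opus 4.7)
The plan is induction on the derivation of $\Gamma\vdash_{\Ndp} u\simeq_{\beta\eta^+}v:A$. Before starting I would record two routine commutation lemmas, both proved by structural induction: (i) the RP-translation commutes with substitution, $(t\llbracket s/x\rrbracket)^*=t^*\llbracket s^*/x\rrbracket$ and $(t\llbracket B/X\rrbracket)^*=t^*\llbracket B^*/X\rrbracket$; (ii) it commutes with principal contexts, $(\TT T[s])^*=\TT T^*[s^*]$, extending Remark~\ref{rp-context} and handling v.c.\ substitution as in Remark~\ref{rem:etacapturing}. Reflexivity, symmetry, transitivity and the congruence rules then reduce to the induction hypothesis, while the basic conversion axioms $\imp\beta$, $\imp\eta$, $\forall\beta$, $\forall\eta$ translate to their $\Nd$-analogues immediately.

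The $\dagger\beta_h$ case becomes a pure $\beta$-reduction in $\Nd$: unfolding $(\iota_\dagger^h\langle t_j\rangle)^*$ and $(\delta_\dagger(\cdot,\cdot))^*$, the translation of the redex $(\Lambda X.\lambda\langle x_k\rangle.x_h\langle t_j^*\rangle_j)\,A^*\,\langle \lambda\langle y_j\rangle.s_k^*\rangle_k$ reduces in $|\C K|+2$ steps to $s_h^*\llbracket\langle t_j^*\rangle/\langle y_j\rangle\rrbracket=(s_h\llbracket\langle t_j\rangle/\langle y_j\rangle\rrbracket)^*$ by the substitution lemma. For $\dagger\eta^+$ I would follow Remark~\ref{gammag-lambda} and decompose it into $\dagger\gamma^+$ together with the narrow $\dagger\eta$ axiom $\delta_\dagger(t,\langle\langle y_j\rangle.\iota_\dagger^k\langle y_j\rangle\rangle_k)\simeq t$, treating the two sub-cases separately. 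The $\dagger\gamma^+$ case is a direct application of $\varepsilon$: with elimination context $\TT V=\TT U^*:C^*\vdash^{\Gamma^*}D^*$, auxiliary arguments $u_k:=\lambda\langle y_j\rangle.s_k^*$ of type $F_k\llbracket C^*/X\rrbracket$ (where $F_k=\langle A^*_{f(j)}\rangle_j\supset X$), and instantiations $C^*\rightsquigarrow D^*$, the $\varepsilon$-rule rewrites $\TT U^*[t^*C^*\langle u_k\rangle_k]$ to $t^*D^*\langle \Fun{X}{F_k}{\TT U^*}[u_k]\rangle_k$; unfolding $\Fun{X}{F_k}{\TT U^*}[u_k]$ using the expansion pair of Appendix~\ref{cexp-appendix} gives $\lambda\langle y_j\rangle.\TT U^*[s_k^*]$ modulo $\beta$, which by the context lemma matches the translation of the right-hand side.

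The $\dagger\eta$ case is the main obstacle. We must prove
\[
t^*\,(\dagger\langle A_i\rangle^*)\,\bigl\langle\lambda\langle y_j\rangle.(\iota_\dagger^k\langle y_j\rangle)^*\bigr\rangle_{k\in\C K}\ \simeq_{\beta\eta\varepsilon}\ t^*.
\]
The plan is to $\forall\eta$- and $\supset\eta$-expand both sides to the common shape $\Lambda Y.\lambda\langle y_k\rangle.(-)\,Y\,\langle y_k\rangle_k$, so that the question reduces to showing that the inner term $t^*(\dagger\langle A_i\rangle^*)\langle u_k\rangle_k\,Y\,\langle y_k\rangle_k$ is $\beta\eta\varepsilon$-equivalent to $t^*\,Y\,\langle y_k\rangle_k$. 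Applying $\varepsilon$ with the elimination context $\TT W=[\ ]\,Y\,\langle y_k\rangle_k:\dagger\langle A_i\rangle^*\vdash^{\Gamma^*,\vec y}Y$ turns the former into $t^*\,Y\,\langle \Fun{X}{F_k}{\TT W}[u_k]\rangle_k$; a direct computation using the definition of the injection translation yields $\Fun{X}{F_k}{\TT W}[u_k]\simeq_{\beta\eta}y_k$, after which re-abstracting closes the argument. This is, in essence, the parametricity content expressed by $\varepsilon$: applying a polymorphic term to the canonical injections is $\eta$-equivalent to the term itself.

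The hard part will be this $\dagger\eta$ subcase. Although each individual manipulation is small, orchestrating the two $\eta$-expansions, the $\varepsilon$-rewrite and the unfolding of $\Fun{X}{F_k}{\cdot}$ requires care, especially since everything must be verified uniformly across the index family $\C K$ and the sub-families $g^{-1}(k)$ determined by the polynomial connective $\dagger^{(f,g)}$. A secondary concern is variable capture inside the elimination/introduction contexts appearing in $\Fun{X}{F_k}{\cdot}$, which is dealt with once and for all by the v.c.\ variant derivable from Remark~\ref{rem:epsicapturing} and then used silently.
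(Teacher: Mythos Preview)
Your proposal is correct and follows the paper's route: decompose $\dagger\eta^+$ into $\dagger\gamma^+$ plus $\dagger\eta$ (Remark~\ref{gammag-lambda}), and handle each with a single application of $\varepsilon$ followed by the $\beta$-simplification of the $A$-expansion of a $\lambda$-abstraction (the paper's Lemma~\ref{lemmacexp}). The only cosmetic difference is in the $\dagger\eta$ subcase: the paper applies $\varepsilon$ with the \emph{introduction} context $\Lambda X.\lambda\langle f_k\rangle.[\ ]$ (hence needs the v.c.\ variant from Remark~\ref{rem:epsicapturing}), whereas you $\eta$-expand first and apply $\varepsilon$ with the dual \emph{elimination} context $[\ ]Y\langle y_k\rangle$; note also that in your $\gamma^+$ case the context $\TT U^*$ need not be an elimination context, but this is harmless since the $\varepsilon$-rule accepts arbitrary contexts.
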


We need the following two lemmas:

\begin{lemma}\label{subscontext}
For all $\rr{\TT U}: A\vdash_{\Ndp}^{\Gamma}B$ and $t$ such that $\Gamma\vdash_{\Ndp} t:A$, $(\rr{\TT U}[t])^*= \rr{\TT U}^*[t^*]$.
\end{lemma}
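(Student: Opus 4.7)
The proof will proceed by straightforward structural induction on the context $\TT{U}$. Since every context is built from the hole $[\ ]$ by iterated application of the term-formers of $\Ndp$, and the RP-translation $(\cdot)^*$ is defined clause-by-clause on these same formers (Definition~\ref{rp-lambda}), the statement essentially follows from the observation that $(\cdot)^*$ commutes with each constructor. The plan is to verify this compositionality case by case, checking at each step that substitution of $t$ into the hole of $\TT{U}$ on the syntactic side matches substitution of $t^*$ into the hole of $\TT{U}^*$ on the translated side.

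The base case is $\TT{U} = [\ ]$, where $\TT{U}[t] = t$ and $\TT{U}^* = [\ ]$, so both sides equal $t^*$. For the inductive step, I handle each way a context can extend one of its sub-contexts. The easy cases are the ``transparent'' constructors $\lambda x.\TT{U}'$, $\Lambda X.\TT{U}'$, $\TT{U}' u$, $\TT{U}' B$: in each, the clause for $(\cdot)^*$ simply pushes the translation through, e.g.\ $(\lambda x.\TT{U}'[t])^* = \lambda x. (\TT{U}'[t])^* \stackrel{\text{IH}}{=} \lambda x.\TT{U}'^*[t^*] = (\lambda x.\TT{U}'^*)[t^*]$, and identically for the other transparent cases. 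Similarly, when $\TT{U}$ sits inside an argument position of $\delta_\dagger$ or $\iota^k_\dagger$ (but not in the ``principal'' position translated as a binder-site), the result follows immediately by induction applied to that argument.

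The cases that require a moment's care are those in which $\TT{U}$ occurs in the principal position of $\delta_\dagger(\TT{U}', \langle\langle y_j\rangle.s_k\rangle)$ or one of the branch positions, and those in which $\TT{U}$ occurs as an argument of $\iota^{k_0}_\dagger\langle t_1,\dots,\TT{U}',\dots\rangle$: here the clauses of Definition~\ref{rp-lambda} introduce \emph{fresh} binders ($\Lambda X$ and $\lambda\langle x_k\rangle$ for $\iota$, a type argument $A^*$ and $\lambda\langle y_j\rangle$ for $\delta$). The non-variable-capturing convention on $[\ ]$ in $\TT{U}$ means that these freshness choices can always be made disjoint from $FV(t^*)$ (up to $\alpha$-renaming of $\TT{U}^*$), so substitution of $t^*$ into $\TT{U}^*$ remains non-variable-capturing, and the inductive hypothesis applies to the sub-context verbatim. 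For example, with $\TT{U} = \delta_\dagger(\TT{U}',\langle\langle y_j\rangle.s_k\rangle)$, one computes
\begin{align*}
(\TT{U}[t])^*
&= (\delta_\dagger(\TT{U}'[t],\langle\langle y_j\rangle.s_k\rangle))^*
= (\TT{U}'[t])^* A^* \langle \lambda\langle y_j\rangle.s_k^*\rangle\\
&\stackrel{\text{IH}}{=} \TT{U}'^*[t^*]\, A^* \langle \lambda\langle y_j\rangle.s_k^*\rangle
= \bigl(\TT{U}'^* A^*\langle\lambda\langle y_j\rangle.s_k^*\rangle\bigr)[t^*]
= \TT{U}^*[t^*],
\end{align*}
and the $\iota$ case is analogous.

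I do not expect any real obstacle: the only thing to watch is the freshness of the binders inserted by $(\cdot)^*$ in the $\iota$ and $\delta$ clauses, and this is guaranteed by the n.v.c.\ hypothesis on $\TT{U}[t]$ together with Remark~\ref{rp-context}, which already records that $\TT{U}^*$ is a context in $\Nd$. Thus the induction closes and the lemma follows.
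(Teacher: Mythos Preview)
Your proposal is correct and follows exactly the approach the paper takes: the paper's own proof is the single sentence ``The lemma is easily established by induction on $\TT U$,'' and you have simply spelled out the case analysis that this induction entails. One very minor omission is the base case $\TT U = x$ for an ordinary variable (distinct from the hole), but this is trivial since $x^* = x$.
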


\begin{proof}
The lemma is easily established by  induction on $\rr{\TT U}$.
\end{proof}

\begin{lemma}\label{lemmacexp}
For all  sp-$X$ formulas $A\in\Ld$ such that $A=\EXP{X}{a\in \C A}{A_a}$, if $\rr{\TT U}:
B\vdash^{\Gamma}C$
 and $\Gamma,\lust{x_a:A_{a}}{a}{\C A}\vdash_{\Nd} t:B$, the following hold:  
   \begin{center}
  {\small
$ \Gamma\vdash \Fun{X}{(\EXP{X}{a\in \C A}{A_a})}{\rr{\TT U}}[\lambda \lust{x_a}{a}{\C A}. t] \rightsquigarrow_{\beta} \lambda \lust{x_a}{a}{\C A}.\rr{\TT U}[t]:\EXP{C}{a\in\C A}{A_a}$
}
\end{center}
Moreover, if no free variable of $\TT{El}_{\langle A_{a}\rangle_{a\in \mathcal A}\supset X}$ is bound in $\TT U$, then
   \begin{center}
  {\small
$ \Gamma\vdash \Fun{X}{(\EXP{X}{a\in \C A}{A_a})}{\rr{\TT U}}\{\lambda \lust{x_a}{a}{\C A}. t\} \rightsquigarrow_{\beta} \lambda \lust{x_a}{a}{\C A}.\rr{\TT U}\{t\}:\EXP{C}{a\in\C A}{A_a}$
}
\end{center}
\end{lemma}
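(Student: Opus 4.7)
The plan is to unfold $\Fun{X}{A}{\TT U}$ for $A = \EXP{X}{a\in \C A}{A_a}$ into its explicit $\lambda$-calculus form using Fact~\ref{expa2}, and then to reduce the $k$ (with $k=|\C A|$) redexes that are created when one instantiates the hole with $\lambda\lust{x_a}{a}{\C A}.t$.

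First I would run an auxiliary induction on $k$ to show, via Fact~\ref{expa2} and the base case $\TT{El}_X=\TT{In}_X=[\,]$ from Definition~\ref{ie-context}, that an expansion pair for $A=A_{a_1}\supset\ldots\supset A_{a_k}\supset X$ can be taken to be $\TT{El}_A = [\,]\,x_{a_1}\ldots x_{a_k}$ and $\TT{In}_A = \lambda x_{a_1}.\ldots \lambda x_{a_k}.[\,]$; by Fact~\ref{alphaexpa} no generality is lost in making this choice. Unfolding Definition~\ref{cexp-lambda} then yields
\[
  \Fun{X}{A}{\TT U} \;=\; \TT{In}_A\{\TT U[\TT{El}_A[x]]\}\;=\;\lambda x_{a_1}.\ldots\lambda x_{a_k}.\TT U[x\,x_{a_1}\ldots x_{a_k}],
\]
where $x$ is the hole variable of the resulting context and the $x_{a_i}$ appearing in $\TT U$ are captured by the outer $\lambda$'s. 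Substituting $\lambda \lust{x_a}{a}{\C A}.t$ for $x$ by n.v.c.-substitution produces $\lambda x_{a_1}.\ldots \lambda x_{a_k}.\TT U[(\lambda x_{a_1}.\ldots \lambda x_{a_k}.t)\, x_{a_1}\ldots x_{a_k}]$, and $k$ successive applications of $\mathord{\supset}\beta$ contract the iterated redex to $t$, giving the desired reduct $\lambda \lust{x_a}{a}{\C A}.\TT U[t]$.

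For the \emph{moreover} part I would repeat the same computation with v.c.-substitution throughout. The side condition---that no free variable of $\TT{El}_A$, i.e.~none of $x,x_{a_1},\ldots,x_{a_k}$, is bound in $\TT U$---is exactly what guarantees that $\TT U\{x\,x_{a_1}\ldots x_{a_k}\}$ creates no spurious captures and that, after the $k$ $\mathord{\supset}\beta$-contractions, one lands on $\TT U\{t\}$ rather than on an alpha-variant introduced by the n.v.c.-convention. The main (but purely bookkeeping) obstacle is tracking the interaction between the outer v.c.-substitution that forms $\Fun{X}{A}{\TT U}$ and the subsequent instantiation of the hole $x$; the stated side condition is precisely the minimal assumption that makes the two substitution conventions coincide throughout the computation. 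Beyond Facts~\ref{expa2} and~\ref{alphaexpa}, the explicit shape of $\TT{In}_A$ and $\TT{El}_A$, and the $k$ uses of $\mathord{\supset}\beta$, no further ingredients are needed.
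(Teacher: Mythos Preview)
Your proposal is correct and is essentially the same argument the paper has in mind: the paper simply states that the lemma ``is easily established by induction on the length of the list $\C A$,'' and your computation---extracting the explicit shape of $(\TT{El}_A,\TT{In}_A)$ via Fact~\ref{expa2} and then performing $|\C A|$ successive $\mathord{\supset}\beta$-contractions---is precisely what that induction unfolds to. Your discussion of the side condition for the v.c.-case is accurate and more explicit than the paper, but does not depart from its approach.
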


\begin{proof}
The lemma is easily established by  induction on the length of the list $\C A$.
\end{proof}


\begin{proposition}\label{gammag-epspres}
For all $u,v$ if $\Gamma\vdash_{\Ndp} u\rightsquigarrow_{\gamma^{+}}v:A$ then $\Gamma^*\vdash_{\Nd} u^{*}\rightsquigarrow_{\beta\varepsilon} v^{*}:A^*$.
\end{proposition}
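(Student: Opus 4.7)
The plan is to proceed by induction on the derivation of $\Gamma\vdash_{\Ndp} u\rightsquigarrow_{\gamma^+}v:A$. The congruence cases are routine: since the RP-translation commutes with placement in principal contexts (Lemma~\ref{subscontext}), and $\rightsquigarrow_{\beta\varepsilon}$ in $\Nd$ is closed under contexts, any $\beta\varepsilon$-reduction produced by the inductive hypothesis inside a subterm lifts to one on the whole term. The real content is thus the base case: a single application of $\dagger\gamma^+$.

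In that case $u = \rr{\TT U}[\delta_{\dagger}(t,\langle\lost{y_j}{}{}.s_k\rangle_{k\in\C K})]$ and $v = \delta_{\dagger}(t,\langle\lost{y_j}{}{}.\rr{\TT U}[s_k]\rangle_{k\in\C K})$, for some $\rr{\TT U}: F\vdash^{\Gamma}G$ and $t:\dagger\lost{A_i}{}{}$. Combining Lemma~\ref{subscontext} with the translation clauses of Definition~\ref{rp-lambda} gives
$$u^{*} \;=\; \rr{\TT U}^{*}\bigl[\,t^{*}\, F^{*}\, \langle\lambda\lost{y_j}{}{}.s_k^{*}\rangle_{k\in\C K}\bigr] \qquad\text{and}\qquad v^{*} \;=\; t^{*}\, G^{*}\, \langle\lambda\lost{y_j}{}{}.\rr{\TT U}^{*}[s_k^{*}]\rangle_{k\in\C K},$$
and by Remark~\ref{rp-context} we have $\rr{\TT U}^{*}: F^{*}\vdash^{\Gamma^{*}}G^{*}$, while $t^{*}$ inhabits the universal polynomial formula $(\dagger\lost{A_i}{}{})^{*}=\forall X.\EXP{X}{k\in\C K}{\EXP{X}{j\in g^{-1}(k)}{A_{f(j)}^{*}}}$.

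The key step is then to apply the $\varepsilon$-rule of Appendix~\ref{epsilonappendix} to $u^{*}$, taking $C:=F^{*}$, $D:=G^{*}$, $u_k:=\lambda\lost{y_j}{}{}.s_k^{*}$ and the context $\rr{\TT U}^{*}$. This produces
$$u^{*}\;\rightsquigarrow_{\varepsilon}\; t^{*}\, G^{*}\, \Bigl\langle\Fun{X}{(\EXPO{X}{j\in g^{-1}(k)}{A_{f(j)}^{*}})}{\rr{\TT U}^{*}}\bigl[\lambda\lost{y_j}{}{}.s_k^{*}\bigr]\Bigr\rangle_{k\in\C K}.$$
Lemma~\ref{lemmacexp} then collapses each factor: noting that $\EXP{X}{j\in g^{-1}(k)}{A_{f(j)}^{*}}$ is sp-$X$, we get for every $k$
$$\Fun{X}{(\EXPO{X}{j\in g^{-1}(k)}{A_{f(j)}^{*}})}{\rr{\TT U}^{*}}\bigl[\lambda\lost{y_j}{}{}.s_k^{*}\bigr]\;\rightsquigarrow_{\beta}\;\lambda\lost{y_j}{}{}.\rr{\TT U}^{*}[s_k^{*}].$$
Chaining these $\beta$-reductions after the single $\varepsilon$-step produces exactly $v^{*}$.

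The main obstacle I expect is a bookkeeping issue about variable capture. The $\dagger\gamma^+$- and $\varepsilon$-rules are both formulated using n.v.c.-substitution, and the first clause of Lemma~\ref{lemmacexp} requires that no free variable of the elimination context generating the $A$-expansion be bound in $\rr{\TT U}^{*}$. Both obstacles are handled by choosing the bound names of $\rr{\TT U}^{*}$ sufficiently fresh (an $\alpha$-renaming), or equivalently by appealing to the second clause of Lemma~\ref{lemmacexp}; this is the same manoeuvre used in Remarks~\ref{rem:etacapturing} and~\ref{rem:epsicapturing} to derive the v.c.-substitution variants of $\dagger\eta^+$, $\dagger\gamma^+$ and $\varepsilon$. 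Beyond this bookkeeping, the base case is essentially a one-line chain of reductions, and the congruence cases are immediate.
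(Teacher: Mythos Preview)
Your proof is correct and follows essentially the same route as the paper: induction on the derivation, with the only substantive case being a single $\dagger\gamma^{+}$-step handled by unfolding the RP-translation via Lemma~\ref{subscontext} and Definition~\ref{rp-lambda}, applying one $\varepsilon$-step, and then collapsing each $A$-expansion with Lemma~\ref{lemmacexp}. Your discussion of the variable-capture bookkeeping is more explicit than the paper's, but otherwise the arguments coincide.
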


\begin{proof}
  By induction on the typing derivation $\D D$ of $\Gamma\vdash_{\Ndp} u\rightsquigarrow_{\gamma^{+}}v:A$. If $\D D$ ends with an application of either reflexivity, transitivity or of one of the congruence rules, it is enough to apply the induction hypothesis to the derivations of the premises, and then applying the reflexivity, transitivity or  the  congruence rules.

  If $\D D$ ends with an application of $\dagger\gamma^{+}$, then for some context $\rr{\TT U}:  B\vdash^{\Gamma}  A$, $ u=\rr{\TT U}[\bb{\delta_{\dagger}(t,\lost{\lost{y_j}{}{}.s_{k}}{})}]$ and $v= \bb{\delta_{\dagger}(t,\lost{\lost{y_j}{}{}.\rr{\TT U}[s_{k}]}{}{})}$, where $\Gamma\vdash_{\Ndp} \delta_{\dagger}(t,\lost{\lost{y_j}{}{}.s_{k}}{}{} ) : B$, $\Gamma\vdash_{\Ndp} t:\dagger\lost{A_i}{}{}$ for some~$\dagger$, and   $\Gamma, \langle y_j:A_{f(j)}\rangle\vdash_{\Ndp} s_k: B$ for every $k\in\C K$. Thus $\Gamma^*\vdash_{\Nd} t^*: \forall X.\EXPO{X}{k\in\C K}{\EXPO{X}{j\in g^{-1}(k)}{A_{f(j)}^*}}$, $\Gamma^*, \langle y_j:A^*_{f(j)}\rangle\vdash_{\Nd} s_k^*: B^*$ for every $k\in\C K$ and we can construct a derivation $\D D'$ of $\Gamma^*\vdash_{\Nd}   u^{^*} \rightsquigarrow_{\beta\varepsilon} 
  v^{^*}:A^*$ as shown below:

  \medskip
  {\centering\small
 $\bb{ u^{*}}  \stackrel{\text{Def.~\ref{rp-lambda}}+\text{Lemma~\ref{subscontext}}}{=}\bb{\rr{\TT U}^* \left[ (t^{*}B^*)
     \lost{\lambda \lost{y_j}{j}{g^{-1}(k)}.s_{k}^{*}}{k}{\C K}\right]}   
 \rightsquigarrow_{\varepsilon}
( t^{*} A^*)\lost{\Fun{X}{(\EXPO{X}{j\in g^{-1}(k)}{A_{f(j)}^*})}{\rr{\TT U}^*}[\lambda\lost{y_j}{}{}.s_{k}^{*}]}{}{} 
\stackrel{\text{Lemma~\ref{lemmacexp}}}{\rightsquigarrow_{\beta}} ( t^{*} A^*)\lost{\lambda\lost{y_j}{}{}.\rr{\TT U}^*[s_{k}^{*}]}{}{}  \stackrel{\text{Lemma.~\ref{subscontext}+ Def.~\ref{rp-lambda}}}{=} \bb{v^{^*}}
$

}

\end{proof}

\begin{proposition}\label{etag-epspres}
For all $u,v$ if $\Gamma\vdash_{\Ndp} u\simeq_{\eta}v:A$ then $\Gamma^*\vdash_{\Nd} u^{*}\simeq_{\beta\eta\varepsilon} v^{*}:A^*$.
\end{proposition}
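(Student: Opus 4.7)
The argument goes by induction on the derivation of $\Gamma \vdash_{\Ndp} u \simeq_\eta v : A$. The structural cases (reflexivity, symmetry, transitivity) are immediate from the corresponding rules for $\simeq_{\beta\eta\varepsilon}$. The congruence cases follow by applying the induction hypothesis to the premises and reassembling with the corresponding congruence rules of $\Nd$, using Lemma~\ref{subscontext} to guarantee that the RP-translation commutes with substitution into term contexts. The base cases $\imp\eta$ and $\forall\eta$ translate essentially unchanged, since $(\lambda x.\, u\, x)^{*} = \lambda x.\, u^{*} x$ and $(\Lambda X.\, u\, X)^{*} = \Lambda X.\, u^{*} X$; each such instance therefore becomes an instance of the corresponding $\eta$-rule in $\Nd$.

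The key case is $\dagger\eta$, where $u = \bb{\delta_{\dagger}(t, \langle \langle y_{j}\rangle_{j}.\, \iota_{\dagger}^{k}\langle y_{j}\rangle_{j}\rangle_{k\in\C K})}$ and $v = t$, both of type $\dagger\langle A_{i}\rangle_{i}$. Writing $F = (\dagger\langle A_{i}\rangle_{i})^{*} = \forall Y.\, \langle G_{k}(Y)\rangle_{k} \supset Y$ with $G_{k}(Y) = \langle A_{f(j)}^{*}\rangle_{j} \supset Y$, and setting $u_{k} = \lambda\langle y_{j}\rangle_{j}.\, \Lambda Y.\, \lambda\langle x_{l}\rangle_{l}.\, x_{k}\langle y_{j}\rangle_{j}$, we have $u^{*} = t^{*}\, F\, \langle u_{k}\rangle_{k}$ and $v^{*} = t^{*}$. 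By repeated applications of $\forall\eta$ and $\imp\eta$ together with the congruence rules on both sides, it suffices to establish $u^{*}\, Z\, \langle v_{l}\rangle_{l} \simeq_{\beta\eta\varepsilon} t^{*}\, Z\, \langle v_{l}\rangle_{l}$ for a fresh variable $Z$ and fresh term variables $v_{l}:G_{l}(Z)$.

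To prove this, the plan is to apply the $\varepsilon$-rule with the elimination context $\TT U = [\ ]\, Z\, \langle v_{l}\rangle_{l}$, which has type $F \Rightarrow Z$, to the term $t^{*}\, F\, \langle u_{k}\rangle_{k}$. This yields $u^{*}\, Z\, \langle v_{l}\rangle_{l} = \TT U[t^{*}\, F\, \langle u_{k}\rangle_{k}] \simeq_{\varepsilon} t^{*}\, Z\, \langle \Fun{Y}{G_{k}(Y)}{\TT U}[u_{k}]\rangle_{k}$. A computation using Lemma~\ref{lemmacexp} followed by iterated $\beta$-reductions reduces $\Fun{Y}{G_{k}(Y)}{\TT U}[u_{k}]$ to $\lambda\langle y_{j}\rangle_{j}.\, v_{k}\langle y_{j}\rangle_{j}$, which is $\eta$-equivalent to $v_{k}$; combining these steps gives $u^{*}\, Z\, \langle v_{l}\rangle_{l} \simeq_{\beta\eta\varepsilon} t^{*}\, Z\, \langle v_{l}\rangle_{l}$, as required.

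The main obstacle is precisely this computation in the $\dagger\eta$ case: the elimination context $\TT U$ must be chosen so that the $A$-expansion produced by $\varepsilon$ $\beta\eta$-normalizes exactly to the free variable $v_{k}$. The careful bookkeeping of bound type and term variables, together with the side conditions on $\varepsilon$ and on $\TT U$-expansion (see Remarks~\ref{rem:etacapturing} and~\ref{rem:epsicapturing}), makes the argument somewhat involved, but no conceptually new ingredients are required beyond those already used in the proof of Proposition~\ref{gammag-epspres}.
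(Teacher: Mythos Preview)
Your proof is correct and follows the same overall strategy as the paper's: induction on the equality derivation, with the only interesting case being $\dagger\eta$, handled via the $\varepsilon$-rule together with Lemma~\ref{lemmacexp} and some $\beta\eta$-massaging.

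There is one noteworthy difference in the $\dagger\eta$ case. You first $\eta$-expand both $u^{*}$ and $v^{*}$ and then apply $\varepsilon$ with the \emph{elimination} context $\TT U=[\ ]\,Z\,\langle v_{l}\rangle_{l}$ (going from $F$ down to the atom $Z$). The paper instead $\eta$-expands only $v^{*}$ and applies $\varepsilon$ with the \emph{introduction} context $\TT C=\Lambda X.\lambda\langle f_{k}\rangle.[\ ]$ (going from $X$ up to $A^{*}$), which forces variable-capturing substitution and hence the appeal to Remark~\ref{rem:epsicapturing}. Your dual choice actually sidesteps this subtlety: since elimination contexts bind no variables, n.v.c.\ and v.c.\ substitution coincide, so your reference to Remarks~\ref{rem:etacapturing} and~\ref{rem:epsicapturing} is not strictly needed. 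Conversely, the paper's route avoids the preliminary reduction to the ``stripped'' form $u^{*}Z\langle v_{l}\rangle$. Both variants are equally valid and of comparable length.
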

\begin{proof}
 As in the proof above we can argue by induction on the derivation $\D D$ of $\Gamma\vdash_{\Ndp} u\simeq_{\eta}v:A$. 
 The only non-trivial case is when $\D D$ ends with an application of $\dagger\eta$. 
 So suppose 
  $u= \bb{\delta_{\dagger}(v,\langle{\lost{y_j}{}{}.\iota_{\dagger}^{k}\langle y_{j}\rangle}\rangle_{k})}$. Hence $\Gamma\vdash_{\Ndp} v:\dagger\lost{A_i}{}{}$ for some~$\dagger$, and thus $\Gamma^*\vdash_{\Nd} v^*: \forall X.\EXPO{X}{k\in\C K}{\EXPO{X}{j\in g^{-1}(k)}{A_{f(j)}^*}}$), and we can construct a derivation $\D D'$ of $\Gamma^*\vdash_{\Nd}   u^{^*} \simeq_{\beta\eta\epsilon} 
  v^{^*}:A^*$ as shown below:

  \medskip
  {\centering\small
 $\bb{ v^{*}}  
 \simeq_{\eta}
 \Lambda X. \lambda \langle f_{k}\rangle_{k\in \C K}. v^{*}X  \ \langle  \lambda \langle y_{j}\rangle. f_{k}\langle y_{j}\rangle \rangle_{k\in \C K} =
 \rr{\TT C}\left  \{v^{*}X  \ \langle  \lambda \langle y_{j}\rangle. f_{k}\langle y_{j}\rangle \rangle_{k\in \C K}
\right\}
 \stackrel{\text{Rem. \ref{rem:epsicapturing}}}{\simeq_{\beta\eta\varepsilon}}
 (v^{*} A^{*}) 
\left \langle{\Fun{X}{(\EXPO{X}{j\in g^{-1}(k)}{A_{f(j)}^*})}{\rr{\TT C}}\ho{\lambda\lost{y_j}{}{}.f_{k}\langle y_{j}\rangle}}\right\rangle_{k\in \C K} 
\stackrel{\text{Lemma~\ref{lemmacexp}}}{\rightsquigarrow_{\beta}} ( v^{*} A^*)\lost{\lambda\lost{y_j}{}{}.\rr{\TT C}\{f_{k}\langle y_{j}\rangle\}}{}{}  \stackrel{\text{Lemma.~\ref{subscontext}+ Def.~\ref{rp-lambda}}}{=} \bb{u^{^*}}
$

}

\noindent
where $\rr{\TT C}= \Lambda X.\lambda \langle f_{k}\rangle_{k\in \C K}.[\ ]$ and the variables $X, f_{k}$ are chosen so as not to occur free in the contexts $\TT{El}_{\langle A_{f(j)}^{*}\rangle \supset X}$.


\end{proof}

\begin{proof}[Proof of Proposition \ref{eps-preservation2}]
It suffices to check rule-by-rule that equivalences are preserved. 
The only non-trivial case is the one of  $\dagger\eta^+$, which is treated using Remark~\ref{gammag-lambda}, Proposition \ref{gammag-epspres} and Proposition \ref{etag-epspres}.
\end{proof}

\section{The three translations into $\Nda$}\label{ff-def-appendix}

\begin{definition}[Atomization]\label{at-lambda} For every term $u$ such that $\Gamma\vdash_{\Ndrp} u:A$ we define a term $u^\downarrow$ 
  as follows (observe that when $u =tB$ then $\Gamma\vdash_{\Ndrp}t:\forall W.F$ and since $F$ is polynomial in $W$, by what we observed in Subsection \ref{rp-trans}, $F$ can be written as $  \EXPO{W}{ i\in \C K   }{ \EXPO{W}{j\in \OV{g}(k)  }{A_{f(j)}^{\phantom{fj}*}}}$, with the $A_{i}$ having no occurrence of $W$):


{\centering\small\begin{tabular}{@{}l|l}
  \parbox{.23\textwidth}{$x^{\downarrow} =x$

  $(\lambda x.t)^\downarrow = \lambda x. t^{\downarrow}$

  $(ts)^{\downarrow} = t^{\downarrow}s^{\downarrow}$

  $(\Lambda X.t)^\downarrow = \Lambda X. t^{\downarrow}$}&

\parbox{.75\textwidth}{
                                                           $\phantom{A}$

                                                           $(tY
                                                           )^{\downarrow} = t^{\downarrow}Y
                                                           $

                                                           $(t(C\imp D)
                                                           )^{\downarrow} = \lambda \lost{y_k}{k}{\C K}.\lambda x. (tD
                                                           )^{\downarrow} \lost{\lambda \lost{z_j}{j}{g^{-1}(k)}. y_k \lost{z_j}{j}{g^{-1}(k)} x}{k}{\C K}$

                                                           $(t(\forall X.D)
                                                           )^{\downarrow} =\lambda \lost{y_k}{k}{\C K}.\Lambda X. (tD
                                                           )^{\downarrow} \lost{\lambda \lost{z_j}{j}{g^{-1}(k)}. y_k \lost{z_j}{j}{g^{-1}(k)} X}{k}{\C K}$

                                                        }\\
                 \end{tabular}

               }
             \end{definition}


\begin{definition}[$\varepsilon$-atomization]\label{ateps-lambda}
  For every term $u$ such that $\Gamma\vdash_{\Ndrp} u:A$ we define a term $u^{\downarrow^{\varepsilon}}$ 
  by replacing in Definition~\ref{at-lambda} the last two clauses for terms of the form $tB$ (i.e. with $B$ non-atomic) with the following single clause (assuming $Z=\at{B}$):






\medskip
{\centering\small$(tB)^{\downarrow^{\varepsilon}}\ =\  \lambda \lost{y_k}{k}{\C K}. \bb{\rr{\TT{In}}_{B}}\left\{ t^{\downarrow^{\varepsilon}}Z
  \lost{\Fun{X}{(\EXPO{X}{j\in g^{-1}(k)  }{A_{f(j)}^{\phantom{fj}*}})}{\bb{\rr{\TT{El}}_B}}[y_k]}{k}{\C K}\right\} $

}
\end{definition}
It easy to  check by induction on the derivation of $\Gamma\vdash_{\Ndrp} u:A$ that the following holds:
                            \begin{fact}\label{ff-derpres}For $\natural\in\{\downarrow, \downarrow^{\varepsilon}\}$, if $\Gamma\vdash_{\Ndrp} u:A$, then    $\Gamma\vdash_{\Nda} u^\natural:A$.
              \end{fact}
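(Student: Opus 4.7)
\begin{proofof}{Fact~\ref{ff-derpres}}
The plan is to proceed by induction on the derivation of $\Gamma\vdash_{\Ndrp} u:A$, treating $\downarrow$ and $\downarrow^{\varepsilon}$ in parallel since they agree on all clauses except the one for $\forall$E applied with a non-atomic witness. For the easy cases (axioms, $\supset$I/$\supset$E, $\forall$I), the translations commute with the term constructors, so the induction hypothesis gives immediately $\Gamma\vdash_{\Nda}(\lambda x.t)^{\natural}:A\imp B$, $\Gamma\vdash_{\Nda}(ts)^{\natural}:B$, $\Gamma\vdash_{\Nda}(\Lambda X.t)^{\natural}:\forall X.A$ from their respective premises, and for an axiom there is nothing to check. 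The case $u=tY$ with $Y$ atomic is also immediate: by the induction hypothesis $\Gamma\vdash_{\Nda}t^{\natural}:\forall W.F$, and a single application of $\forall\text{E}_{at}$ yields $\Gamma\vdash_{\Nda}t^{\natural}Y:F\llbracket Y/W\rrbracket$.

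The only non-trivial situation is thus $u=tB$ with $B$ non-atomic, and here I will run a sub-induction on the complexity of the witness $B$ (the outer induction having already given that $\Gamma\vdash_{\Nda}t^{\natural}:\forall W.F$ for the universal polynomial formula $\forall W.F=\forall W.\EXPO{W}{k\in\C K}{\EXPO{W}{j\in g^{-1}(k)}{A_{f(j)}^{\ *}}}$). For $\downarrow$ with $B=C\imp D$, the sub-induction hypothesis gives $\Gamma\vdash_{\Nda}(tD)^{\downarrow}:F\llbracket D/W\rrbracket$, and then the typing of the explicit construction in Definition~\ref{at-lambda} is checked by inspection: the bound variables $y_k$ are typed as $\lost{A_{f(j)}^{\ *}}{j}{g^{-1}(k)}\imp(C\imp D)$, the variable $x$ is typed as $C$, and the innermost abstraction $\lambda\lost{z_j}{j}{}.y_k\lost{z_j}{j}{}x$ has type $\lost{A_{f(j)}^{\ *}}{j}{g^{-1}(k)}\imp D$, which is precisely what $(tD)^{\downarrow}$ expects in its $k$-th argument; the result has type $F\llbracket C\imp D/W\rrbracket$ as required, and every $\forall$E used in it comes either from the sub-induction hypothesis or from terms already in $\Nda$. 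The case $B=\forall X.D$ is analogous, with $\Lambda X$ replacing $\lambda x$ and a type application $X$ in place of the term application $x$.

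For $\downarrow^{\varepsilon}$ the same sub-induction applies, but now the key observation is that the atomic character of the construction is delivered by the expansion pair $(\rr{\TT{El}}_B,\rr{\TT{In}}_B)$: an easy side-induction on $B$ using the inductive clauses of Definition~\ref{ie-context} shows that all $\forall$-eliminations occurring inside $\rr{\TT{El}}_B$ and $\rr{\TT{In}}_B$ are instantiated with fresh variables, hence they are atomic. Combined with the sub-induction hypothesis that $(tZ)^{\downarrow^{\varepsilon}}$ (where $Z=\at{B}$) is typable in $\Nda$ with type $F\llbracket Z/W\rrbracket$, the $F_k$-expansion $\Fun{X}{(\EXPO{X}{j\in g^{-1}(k)}{A_{f(j)}^{\ *}})}{\rr{\TT{El}}_B}$ has the expected functorial type (by the remark after Definition~\ref{cexp-lambda}) and contains only atomic $\forall$-eliminations; substituting these into the scheme of Definition~\ref{ateps-lambda} and wrapping with $\rr{\TT{In}}_B$ yields a term of type $F\llbracket B/W\rrbracket$ in $\Nda$.

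The main obstacle is thus not conceptual but bookkeeping: in both cases one has to verify that the assembled term has exactly the expected type, which requires tracking the typing of each bound variable inside the scheme of Definition~\ref{at-lambda} and, for $\downarrow^{\varepsilon}$, pairing this with the inductive description of the contexts in Definition~\ref{ie-context} to ensure that no non-atomic $\forall$E sneaks in through $\rr{\TT{El}}_B$ or $\rr{\TT{In}}_B$. Once this verification is carried out, the statement follows by combining the outer induction on $u$ with the inner induction on $B$.
\end{proofof}
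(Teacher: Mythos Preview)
Your proof is correct and follows exactly the approach the paper indicates: the paper merely states that the fact is ``easy to check by induction on the derivation of $\Gamma\vdash_{\Ndrp} u:A$'' and leaves all details to the reader. You have filled these in appropriately, including the sub-induction on the witness needed for the recursive clauses of Definition~\ref{at-lambda} and the observation that the expansion contexts $\rr{\TT{El}}_B$, $\rr{\TT{In}}_B$ of Definition~\ref{ie-context} use only variable (hence atomic) instantiations, which is what makes the $\downarrow^{\varepsilon}$ case land in $\Nda$.
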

              From this and Fact~\ref{rp-derpres}  it follows moreover the following:
              \begin{corollary}For $\natural\in\{\downarrow, \downarrow^{\varepsilon}\}$, if $\Gamma\vdash_{\Np} u:A$, then    $\Gamma^*\vdash_{\Nda} u^{*\natural}:A^*$.
              \end{corollary}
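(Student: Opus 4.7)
The plan is to observe that this corollary is essentially a composition of the two facts that immediately precede it, so the only real work is to verify that the ranges and domains match up correctly. First, I would invoke Fact~\ref{rp-derpres}, which states that if $\Gamma\vdash_{\Np} u:A$ then the RP-translation satisfies $\Gamma^*\vdash_{\Ndrp} u^*:A^*$ (i.e., the RP-translation of an $\Np$-derivation lands in the Russell-Prawitz fragment, not merely in $\Nd$). This step is justified by the second half of Fact~\ref{rp-derpres}.

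Next, with $\Gamma^*\vdash_{\Ndrp} u^*:A^*$ in hand, I would apply Fact~\ref{ff-derpres} to the term $u^*$ (taking $\natural\in\{\downarrow,\downarrow^\varepsilon\}$), which yields $\Gamma^*\vdash_{\Nda} (u^*)^\natural:A^*$. Since by definition $u^{*\natural}$ is shorthand for $(u^*)^\natural$, this gives exactly the desired conclusion. The two cases $\natural=\downarrow$ and $\natural=\downarrow^\varepsilon$ are handled uniformly, since Fact~\ref{ff-derpres} is stated for both atomization procedures.

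There is no real obstacle here: the typing of the RP-translation (Definition~\ref{rp-lambda}) and of the two atomization procedures (Definitions~\ref{at-lambda} and \ref{ateps-lambda}) have already been verified in the earlier facts, and the corollary is just the concatenation of these typing statements along the chain $\Np \xrightarrow{*} \Ndrp \xrightarrow{\natural} \Nda$. The only mild subtlety worth mentioning is that the intermediate system must indeed be $\Ndrp$ (and not merely $\Nd$), since $\natural$ is defined only on $\Ndrp$-derivations; but this is guaranteed precisely by the refinement in Fact~\ref{rp-derpres} that the RP-translation of an $\Np$-derivation is an $\Ndrp$-derivation, which in turn reflects the fact that polynomial formulas are translated into universal polynomial formulas.
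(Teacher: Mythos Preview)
Your proposal is correct and follows exactly the paper's own reasoning: the corollary is stated immediately after Fact~\ref{ff-derpres} with the justification ``From this and Fact~\ref{rp-derpres} it follows moreover the following,'' which is precisely the composition $\Np \xrightarrow{*} \Ndrp \xrightarrow{\natural} \Nda$ that you spell out. Your remark about needing the intermediate system to be $\Ndrp$ (not just $\Nd$) is the right subtlety to flag.
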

              
\begin{definition}[ESF-translation]\label{reftrans}
The ESF-translation is obtained by replacing in  Definition~\ref{rp-lambda} $\Ndp$ with $\Np$ and the last clause with the following (we assume here $\Gamma\vdash_{\Np} u:A$ and $Z=\at{A}$):

$$\bb{ (\delta_{\dagger}
  (t, \lost{\lost{y_j}{}{}.s_{k}}{}{}))^{\sharp}}  =\bb{ \rr{\TT{In}}_{A^*}  \left \{  (t^{\sharp}Z)
  \lost{\lambda \lost{y_j}{j}{g^{-1}(k)}.\rr{\TT{El}}_{A^*} [s_{k}^{\sharp}]}{k}{\C K}\right\} }$$
\end{definition}

Also in this case, it  easily  checked by induction on the derivation of $\Gamma\vdash_{\Np} u:A$ that the following holds:
                            \begin{fact}\label{esf-derpres}If $\Gamma\vdash_{\Np} u:A$, then    $\Gamma^*\vdash_{\Nda} u^\sharp:A^*$.
              \end{fact}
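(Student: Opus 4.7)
My plan is to prove Fact~\ref{esf-derpres} by straightforward induction on the typing derivation of $\Gamma\vdash_{\Np} u:A$, the work being concentrated almost entirely in the $\delta_\dagger$-case. The key observation driving the proof is that the ESF-translation agrees with the RP-translation (Definition~\ref{rp-lambda}) on every term constructor \emph{except} $\delta_\dagger$. Since $\Np$-terms do not contain $\forall$-introductions or $\forall$-eliminations at the source level, the only place the RP-translation can introduce a $\forall$E application is in the $\delta_\dagger$-clause. Consequently, for the cases $u=x$, $u=\lambda x.t$, $u=ts$ and $u=\iota_{\dagger}^{k}\langle t_j\rangle$, the translation never uses $\forall$E at all: the induction hypothesis on the subterms together with Fact~\ref{rp-derpres} (or a direct rerun of that inductive step) immediately yields $\Gamma^{*}\vdash_{\Nda} u^{\sharp}:A^{*}$.

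For the case $u=\delta_{\dagger}(t,\langle\langle y_j\rangle.s_k\rangle)$, suppose $\Gamma\vdash t:\dagger\langle A_i\rangle$ and $\Gamma,\langle y_j:A_{f(j)}\rangle\vdash s_k:A$ for each $k\in\C K$. By induction $\Gamma^{*}\vdash_{\Nda} t^{\sharp}:\forall X.\EXP{X}{k\in\C K}{\EXP{X}{j\in g^{-1}(k)}{A_{f(j)}^{*}}}$ and $\Gamma^{*},\langle y_j:A_{f(j)}^{*}\rangle\vdash_{\Nda} s_k^{\sharp}:A^{*}$. Setting $Z=\at{A^{*}}$, I apply $\forall\mathrm{E}_{at}$ to $t^{\sharp}$ with the \emph{atomic} witness $Z$ (this is the only use of $\forall\mathrm{E}_{at}$ introduced directly by the $\delta$-clause), obtaining a term of type $\EXP{Z}{k}{\EXP{Z}{j}{A_{f(j)}^{*}}}$; here I use that the bound variable $X$ chosen in Definition~\ref{def:rp1} is fresh, so it does not occur free in any $A_{f(j)}^{*}$ and the substitution $[Z/X]$ leaves those subformulas untouched. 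Then, using that $(\rr{\TT{El}}_{A^{*}},\rr{\TT{In}}_{A^{*}})$ is an expansion pair for $A^{*}$ (so $\rr{\TT{El}}_{A^{*}}:A^{*}\vdash \at{A^{*}}=Z$ and $\rr{\TT{In}}_{A^{*}}:Z\vdash A^{*}$), for each $k$ the term $\lambda\langle y_j\rangle.\rr{\TT{El}}_{A^{*}}[s_k^{\sharp}]$ has type $\EXP{Z}{j}{A_{f(j)}^{*}}$; applying $t^{\sharp}Z$ to this tuple gives type $Z$, and wrapping the result in $\rr{\TT{In}}_{A^{*}}$ yields type $A^{*}$, as required.

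The main obstacle is ensuring that $\rr{\TT{El}}_{A^{*}}$ and $\rr{\TT{In}}_{A^{*}}$ themselves live in $\Nda$: this requires an auxiliary lemma, proved by induction on $A^{*}$ using Definition~\ref{ie-context}, stating that every context in $\mathsf{Elim}(A^{*})$ and $\mathsf{Intro}(A^{*})$ is typable in $\Nda$. The induction is easy, but the crucial point is the clause $\mathsf{Elim}(\forall Z.C)=\{\rr{\TT E}[[\ ]X]\mid \rr{\TT E}\in\mathsf{Elim}(C)\}$: the witness of the $\forall$E step is the \emph{type variable} $X$, hence atomic, so this step respects the $\forall\mathrm{E}_{at}$ restriction. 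A secondary delicate point is that the outer substitution $\rr{\TT{In}}_{A^{*}}\{\cdot\}$ is variable-capturing; however, since the $\lambda$- and $\Lambda$-binders of $\rr{\TT{In}}_{A^{*}}$ can be chosen fresh with respect to $\mathrm{FV}(t^{\sharp})\cup\bigcup_k\mathrm{FV}(s_k^{\sharp})\cup\mathrm{dom}(\Gamma^{*})$ (this is Fact~\ref{alphaexpa}, which guarantees that the choice of expansion pair is essentially canonical), no accidental capture occurs and the typing goes through. Putting these pieces together closes the $\delta_\dagger$ case and completes the induction.
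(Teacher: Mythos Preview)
Your proposal is correct and follows the same inductive approach the paper has in mind (the paper merely notes that the fact is ``easily checked by induction on the derivation of $\Gamma\vdash_{\Np} u:A$''). Your detailed treatment of the $\delta_\dagger$-case---in particular, the observation that the only $\forall$E introduced directly by the $\sharp$-clauses is the atomic instantiation $t^{\sharp}Z$, and that the expansion contexts $\rr{\TT{El}}_{A^*}$ and $\rr{\TT{In}}_{A^*}$ themselves only ever instantiate quantifiers with type variables---supplies exactly the verification the paper leaves implicit.
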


The interested reader can check that the refined interpretation proposed by Esp\'irito Santo and Ferreira  in \cite{ESF19} coincides essentially 
with ours in the case of $\Nv$.


  \begin{remark}\label{remhelp}
  All three translations $^{*\downarrow},^{*\downarrow^{\varepsilon}}, ^{\sharp}:\Np\to \Nda$ extend straightforwardly to contexts, similarly to what happens for the RP-translation (see Remark \ref{rp-context}).
  Moreover, for each of the three translations $^{\TT T}$ it can be checked by induction that 
   for all elimination context $\rr{\TT E}$ of $\Np$, $\rr{\TT E}^{\TT T}$ is a principal context and that 
      $\TT E^{\TT T}\{t^{\TT T}\}= \TT E^{\TT T}[t^{\TT T}]= (\TT{E}[t])^{\TT T}$.
%
%
  \end{remark}

\section{Proof of Proposition~\ref{ff-preservation}}\label{etagamma-appendix}

We prove the following generalization to  $\Np$ of Point 2 of Proposition~\ref{ff-preservation}:

\begin{proposition}\label{ff-preservation2}
  For all $u, v$, if $\Gamma\vdash_{\Np} u\rightsquigarrow_{\beta\eta\gamma} v:A$ 
then   $\Gamma^*\vdash_{\Nda}u^{\sharp}\simeq_{\beta\eta} v^{\sharp}:A^*$.
\end{proposition}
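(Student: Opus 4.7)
The plan is to proceed by induction on the derivation of $\Gamma\vdash_{\Np} u \rightsquigarrow_{\beta\eta\gamma} v : A$. The cases of reflexivity, transitivity, and symmetry are immediate, and the congruence cases follow by the induction hypothesis combined with Remark~\ref{remhelp} (the ESF-translation extends compositionally to contexts). The $\supset\beta$ and $\supset\eta$ cases are trivial since $\sharp$ acts as the identity on $\lambda$-abstractions and applications. Thus the content of the proof lies in three basic cases: $\dagger\beta_k$, $\dagger\eta$, and $\dagger\gamma$.

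Before attacking these, I would first establish two auxiliary lemmas. The first is a substitution lemma $(t[u/x])^\sharp = t^\sharp[u^\sharp/x]$, proved by straightforward induction on~$t$. The second records the $\beta$-behaviour of a translated injection inside a translated eliminator: if $Z = \at{A^*}$, then
\[
\left(t^\sharp Z\right)\lust{\lambda\lost{y_j}{}{}.\rr{\TT{El}}_{A^*}[s_k^\sharp]}{k}{\C K}\bigl[\langle t_j^\sharp/y_j\rangle\bigr]\ \rightsquigarrow_\beta\ \rr{\TT{El}}_{A^*}\bigl[s_h^\sharp\bigl[\lost{t_j^\sharp/y_j}{}{}\bigr]\bigr]
\]
when $t = \iota_\dagger^h\lust{t_j}{}{}$, obtained by unfolding $(\iota_\dagger^h\lost{t_j}{}{})^\sharp = \Lambda X.\lambda\lost{x_k}{}{}.x_h\lost{t_j^\sharp}{}{}$ and reducing.

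With these in hand, the case $\dagger\beta_h$ falls out: the ESF-translation of the redex is $\rr{\TT{In}}_{A^*}\{\rr{\TT{El}}_{A^*}[s_h^\sharp[\lost{t_j^\sharp/y_j}{}{}]]\}$ after $\beta$-reduction, and by Fact~\ref{etainel} the outer expansion pair $\rr{\TT{In}}_{A^*}\{\rr{\TT{El}}_{A^*}[\,\cdot\,]\}$ collapses modulo $\eta$; an appeal to the substitution lemma identifies the result with the translation of the contractum. The case $\dagger\eta$ is similar: the ESF-translation of $\delta_\dagger(v,\lost{\lost{y_j}{}{}.\iota_\dagger^k\lost{y_j}{}{}}{}{})$ becomes, after the $\beta$-reductions provided by the second lemma, a term of shape $\rr{\TT{In}}_{A^*}\{(v^\sharp Z)\lost{\lambda\lost{y_j}{}{}.y_k\lost{y_j}{}{}}{}{}\}$, which $\eta$-reduces inside each branch and then, after $\forall\eta$, reveals $v^\sharp$ up to the expansion pair; collapsing the expansion pair by Fact~\ref{etainel} concludes.

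The main obstacle will be $\dagger\gamma$. Here I must compare, for an elimination context $\rr{\TT E}:B\vdash^\Gamma A$,
\[
\left(\rr{\TT E}\bigl[\delta_\dagger(t,\lost{\lost{y_j}{}{}.s_k}{}{})\bigr]\right)^\sharp
\quad\text{vs.}\quad
\left(\delta_\dagger(t,\lost{\lost{y_j}{}{}.\rr{\TT E}[s_k]}{}{})\right)^\sharp.
\]
Writing $Y = \at{B^*}$ and $Z = \at{A^*}$, the left-hand side expands to $\rr{\TT E}^\sharp\bigl\{\rr{\TT{In}}_{B^*}\bigl\{(t^\sharp Y)\lost{\lambda\lost{y_j}{}{}.\rr{\TT{El}}_{B^*}[s_k^\sharp]}{}{}\bigr\}\bigr\}$, while the right-hand side expands to $\rr{\TT{In}}_{A^*}\bigl\{(t^\sharp Z)\lost{\lambda\lost{y_j}{}{}.\rr{\TT{El}}_{A^*}[(\rr{\TT E}[s_k])^\sharp]}{}{}\bigr\}$. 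By Remark~\ref{remhelp}, $\rr{\TT E}^\sharp$ is itself a principal (and in fact elimination-shaped) context taking $B^*$ to $A^*$, so the crucial calculation is to show that $\rr{\TT E}^\sharp\bigl\{\rr{\TT{In}}_{B^*}\{\rr{\TT{El}}_{B^*}[w]\}\bigr\} \simeq_{\beta\eta} \rr{\TT{In}}_{A^*}\bigl\{\rr{\TT{El}}_{A^*}[\rr{\TT E}^\sharp\{w\}]\bigr\}$ for a generic subterm~$w$; this is in essence the statement that the elimination prefix of $\rr{\TT E}^\sharp$ can be absorbed into (and re-emitted from) the weak expansion, using Fact~\ref{etainel} and Fact~\ref{expa2} to match the two expansion pairs. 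Combined with the commutation of $\rr{\TT E}^\sharp$ past the branch-head application $(t^\sharp\,\at\cdot)$, which is again an application of Fact~\ref{etainel} for $\forall$-elimination, this yields $\beta\eta$-equivalence of the two sides. The generalization to the full polynomial $\dagger$ is uniform in the indexing because every branch is treated symmetrically, so the subtlety really resides in the single expansion-pair juggling just described.
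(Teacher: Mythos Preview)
Your plan for the $\dagger\beta_h$ and $\dagger\eta$ cases is essentially the paper's, and those go through. The $\dagger\gamma$ case, however, has a genuine gap.

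Your decomposition is: first commute $\rr{\TT E}^\sharp$ past the outer expansion pair (your ``crucial calculation''), then commute $\rr{\TT E}^\sharp$ past the head $(t^\sharp\,\at{\cdot})\langle\dots\rangle$. The first step is indeed harmless, since both sides collapse to $\rr{\TT E}^\sharp\{w\}$ by Fact~\ref{etainel}. But the second step is the whole difficulty, and it is \emph{not} an instance of Fact~\ref{etainel}. What you need is
\[
\rr{\TT{El}}_{A^*}\bigl[\rr{\TT E}^\sharp\bigl\{\rr{\TT{In}}_{B^*}\bigl\{(t^\sharp Y)\langle\lambda\lost{y_j}{}{}.\rr{\TT{El}}_{B^*}[s_k^\sharp]\rangle\bigr\}\bigr\}\bigr]
\ \simeq_{\beta\eta}\
(t^\sharp Z)\langle\lambda\lost{y_j}{}{}.\rr{\TT{El}}_{A^*}[\rr{\TT E}^\sharp[s_k^\sharp]]\rangle,
\]
with $Y=\at{B^*}$ and $Z=\at{A^*}$ in general distinct. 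If $t^\sharp$ is, say, a variable of type $(\dagger\langle A_i\rangle)^*$, then $t^\sharp Y$ and $t^\sharp Z$ are different atomic instantiations and are not related by any local $\beta\eta$-move; Fact~\ref{etainel} only inserts or removes an expansion pair around a term, it does not rewrite the witness of a $\forall$-elimination. So your step~2 cannot be discharged as stated.

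The paper closes this gap with Lemma~\ref{lemmett1}, which is the real engine of the $\gamma$ case. The lemma is formulated for an abstract $\C K$-multicontext $\rr{\TT M}$ containing a \emph{free type variable} $X$, and is proved by induction on the principal context $\rr{\TT C}$ (your $\rr{\TT E}^\sharp$). The change of atom from $\at{A}$ to $\at{B}$ is produced precisely in the inductive step for $\rr{\TT C}=\rr{\TT C}'W$: there one peels off a $\Lambda$-binder from $\rr{\TT{In}}_{\forall Y.B'}$, and the ensuing $\forall\beta$-reduction substitutes $W$ for $Y$ \emph{inside} $\rr{\TT M}_{\at{\forall Y.B'}}$, turning it into $\rr{\TT M}_{\at{B}}$. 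In the application, one takes $\rr{\TT M}=(t^\sharp X)\langle\lambda\lost{y_j}{}{}.[\ ]_k\rangle$, so that the head instantiation is carried along by this substitution rather than fixed in advance. This yields $\simeq_\beta$ (no $\eta$ needed), which is strictly stronger than what your two-step decomposition could give even if it worked. The missing idea, then, is to keep the atom parametric and let the induction on $\rr{\TT E}^\sharp$ drive its change through $\forall\beta$.

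(A small separate slip: the hypothesis is $\rightsquigarrow_{\beta\eta\gamma}$, which has no symmetry rule, so there is no symmetry case to handle.)
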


Let $\mathcal A$ be a finite list. 
By a \emph{$\mathcal A$-multicontext} $\rr{\TT M}$ we indicate a term containing exactly one occurrence of $|\mathcal A|$ distinct special variables noted as $[\ ]_{a}$, for each $a\in\mathcal A$. Given
a $\mathcal A$-multicontext $\TT M$ and a finite list of terms $\langle u_{a}\rangle_{a\in \mathcal A}$, we let $\TT M\{\langle u_{a}\rangle_{a\in \mathcal A}\}$ be the term obtained by simultaneous variable-capturing substitution of $[\ ]_{a}$ by $u_{a}$.
We let $\TT M: \langle \Delta_{a}\vdash A\rangle_{a\in\mathcal A} \Rightarrow( \Gamma\vdash B)$ indicate that for all $\mathcal A$-indexed list of terms  $\langle u_{a}\rangle_{a\in \mathcal A}$ such that $\Gamma, \Delta_{a}\vdash u_{a}:A$, $\Gamma\vdash\TT M\{\langle u_{a}\rangle_{a\in \mathcal A}\}:B$ (meaning that the free variables of $u_{a}$ in $\Delta_{a}$ are captured by $\TT M$).

%

%


\begin{lemma}\label{lemmett1} 
Let $A,B$ be formulas of $\Nd$; let  
$u_{1},\dots, u_{|\mathcal A|}$ be terms such that $\Gamma, \Delta_{a}\vdash_{\Nda} u_{a}:A$, $\rr{\TT C}: A\vdash^{\Gamma}_{\Nda} B$ be a principal context of $\Nda$ and $\rr{\TT M}: 
\langle \Delta_{a}\vdash_{\Nda} X\rangle_{a\in \mathcal A}\Rightarrow  (\Gamma\vdash_{\Nda} X)$
 be a $\mathcal A$-multicontext such that $X$ does not occur free neither in $\Gamma$ nor in the $\Delta_{a}$.
Moreover, suppose that 
 $\TT M$ binds no term or type variable of $\TT{El}_{A}$ and that $\TT{In}_{A}$ binds no term or type variable of $\rr{\TT M}$, $\TT C$ and of the $u_{a}$. Then
%
$$
\rr{\TT C}\Big [ \rr{\TT{In}}_{A}\big\{  \rr{\TT{M}_{\at A}}\{ \langle\rr{\TT{El}}_{A}[u_{a}]\rangle_{a\in\mathcal A}\}\big \} \Big ] \simeq_{\beta} 
\rr{\TT{In}}_{B}\Big\{ \rr{\TT{M}_{\at{B}}} \big\{ \langle \rr{\TT{El}}_{B}[ \rr{\TT C}[ u_{a}] ] \rangle_{a\in \mathcal A} \big\} \Big \}
$$
where $\TT M_{Y}$ is shorthand for $\TT M\llbracket Y/X\rrbracket$.
%
%
%
\end{lemma}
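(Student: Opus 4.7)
The plan is to argue by induction on the structure of the principal context $\TT C$. Since $\TT C$ is a principal context in $\Nda$, the grammar reduces to the five cases $\TT C = [\ ]$, $\TT C' u$, $\TT C' D$, $\lambda y.\TT C'$, and $\Lambda Y.\TT C'$; no polynomial-connective constructors arise. In the base case $\TT C = [\ ]$ we have $B=A$ and both sides are literally the same term, so no reduction is needed. The reason the whole statement holds under $\simeq_\beta$ (rather than $\simeq_{\beta\eta}$, which one might initially expect from Fact~\ref{etainel}) is that we never need to collapse an expansion pair against a bare identity: the multicontext $\TT M$ is carried through the induction without being simplified, and the elimination/introduction pairs are always paired against each other.

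The four inductive cases are essentially symmetric. Suppose $\TT C = \TT C' v$, so $\TT C' : A \vdash^\Gamma (C'\supset B)$ and $\Gamma\vdash v : C'$. Applying the induction hypothesis to $\TT C'$ with output type $C'\supset B$ and using Fact~\ref{expa2} to write $\TT{El}_{C'\supset B} = \TT E[[\ ]x]$ and $\TT{In}_{C'\supset B} = \lambda x.\TT I$ for an expansion pair $(\TT E,\TT I)$ of $B$ and fresh $x$, the LHS rewrites by IH plus the outer application to
\[
\bigl(\lambda x.\,\TT I\{\TT M_{\at B}\{\langle \TT E[\TT C'[u_a]\,x]\rangle_a\}\}\bigr)\,v.
\]
One $\beta$-step substitutes $v$ for $x$ (the freshness of $x$ is what makes this a single redex with no interference); $\TT C'[u_a]\,v$ becomes $\TT C[u_a]$, yielding $\TT I\{\TT M_{\at B}\{\langle \TT E[\TT C[u_a]]\rangle_a\}\}$, which equals the RHS by Fact~\ref{alphaexpa}. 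The case $\TT C = \TT C'D$ is strictly analogous, using the $\forall$-clause of Fact~\ref{expa2} and a type-application $\beta$-redex.

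For $\TT C = \lambda y.\TT C'$, we have $B = C_1\supset C_2$; apply IH to $\TT C'$ under the enlarged context $\Gamma, y:C_1$ and then wrap with $\lambda y$. On the RHS, pick the expansion pair for $B$ of the form $\TT{El}_B = \TT{El}_{C_2}[[\ ]z]$, $\TT{In}_B = \lambda z.\TT{In}_{C_2}$ with $z$ fresh (legal by Fact~\ref{alphaexpa}). Then $\TT{El}_B[\TT C[u_a]] = \TT{El}_{C_2}[(\lambda y.\TT C'[u_a])\,z]$, which $\beta$-reduces to an $\alpha$-variant of $\TT{El}_{C_2}[\TT C'[u_a]]$; pulling the outer $\lambda z$ out and $\alpha$-renaming $z$ to $y$ matches the IH form. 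The case $\TT C = \Lambda Y.\TT C'$ is identical with the obvious replacements.

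The main obstacle I anticipate is the bookkeeping of variable capture in the inductive step. Two subtle conditions must survive: the hypothesis that $\TT{In}_A$ binds nothing occurring in $\TT M,\TT C, u_a$ (needed so that the v.c.-substitutions on the LHS behave as intended) must be checked for the subcontexts $\TT C'$, and the ``freshness'' of $X$ relative to the ambient typing context must be preserved when the context grows by $y:C_1$ (in the $\lambda$-case) or by $Y$ (in the $\Lambda$-case). If $X\in FV(C_1)$, one needs first to rename $X$ to a truly fresh type variable before invoking the IH; this is harmless because $X$ is bound at the level of the statement of the lemma (only its role as the label of the holes of $\TT M$ matters). Another delicate point is the application of Fact~\ref{alphaexpa} to multicontexts rather than ordinary contexts, but its proof (straightforward induction on $A$) extends without difficulty by treating each hole of $\TT M$ independently.
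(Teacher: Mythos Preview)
Your proposal is correct and follows essentially the same approach as the paper: both argue by induction on the principal context $\TT C$, handling the same five cases (identity, application, type application, $\lambda$-abstraction, $\Lambda$-abstraction) with the same strategy of unfolding the expansion pair via Fact~\ref{expa2} and performing a single $\beta$-step. Your additional remarks about variable-capture bookkeeping and the need to rename $X$ when the context grows are more explicit than the paper's treatment, which simply glosses over these issues; otherwise the arguments coincide.
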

\begin{proof}
We argue by induction on the principal context $\TT C$:
\begin{itemize}
\item if $\TT C=[\ ]$ the claim is immediate;

\item if $\TT C= \TT C'v$, then $\TT C': A\vdash^{\Gamma}_{\Nda} C\supset B$ and $\Gamma\vdash_{\Nda } v:C$,  and noticing that $\at{B}=\at{C\supset B}$ we then have

  \medskip
  {\centering\small
  \begin{tabular}{c}
  $
\rr{\TT C}\Big [ \rr{\TT{In}}_{A}\big\{ \rr{\TT{M}_{\at{A}}} \{\langle\rr{\TT{El}}_{A}[u_{a}] \rangle_{a\in \mathcal A}\}\big \} \Big ]
= \rr{\TT C'}\Big [ \rr{\TT{In}}_{A}\big\{ \rr{\TT{M}_{\at{A}}} \{\langle\rr{\TT{El}}_{A}[u_{a}]\rangle_{a\in \mathcal A} \}\big \} \Big ] v $\\
$
\stackrel{\text{I.H.}}{\simeq_{\beta}}
\Big ( \rr{\TT{In}}_{C\supset B}\big\{ \rr{\TT{M}_{\at{B}}} \{\langle\rr{\TT{El}}_{C\supset B}[\TT C'[u_{a}]]\rangle_{a\in \mathcal A} \}\big \} \Big ) v 
=
\Big ( \lambda x.\rr{\TT{In}}_{ B}\big\{ \rr{\TT{M}_{\at{B}}} \{\langle\rr{\TT{El}}_{ B}[\TT C'[u_{a}]x]\rangle_{a\in \mathcal A} \}\big \} \Big ) v 
$ \\
$\rightsquigarrow_{\beta}
 \rr{\TT{In}}_{ B}\big\{ \rr{\TT{M}_{\at{B}}}\{\langle \rr{\TT{El}}_{ B}[\TT C'[u_{a}]v] \rangle_{a\in \mathcal A}\}\big \} 
 =
\rr{\TT{In}}_{ B}\big\{ \rr{\TT{M}_{\at{B}}}\{ \langle \rr{\TT{El}}_{ B}[\TT C[u_{a}]] \rangle_{a\in \mathcal A}\}\big \} 
$
\end{tabular}
}

%
%
%
%

\item if $\TT C= \TT C'W$, then $\TT C': A\vdash^{\Gamma} \forall Y. B'$, $B=B'\llbracket W/Y\rrbracket$; 
by noticing that $\at{\forall Y.B'}\llbracket W/Y\rrbracket=\at{B}$ we then
  have 

  \medskip
  {\centering\small
  \begin{tabular}{c}
  $
\rr{\TT C}\Big [ \rr{\TT{In}}_{A}\big\{ \rr{\TT{M}_{\at{A}}}\{\langle\rr{\TT{El}}_{A}[u_{a}]\rangle_{a\in\mathcal A} \}\big \} \Big ]
= \rr{\TT C'}\Big [ \rr{\TT{In}}_{A}\big\{ \rr{\TT{M}_{\at{A}}} \{\langle \rr{\TT{El}}_{A}[u_{a}]\rangle_{a\in\mathcal A} \}\big \} \Big ]W
$ \\
$
\stackrel{\text{I.H.}}{\simeq_{\beta}}
\Big ( \rr{\TT{In}}_{\forall Y. B'}\big\{ \rr{\TT{M}_{\at{\forall Y.B'}}} \{\langle\rr{\TT{El}}_{\forall Y. B'}[\TT C'[u_{a}]]\rangle_{a\in\mathcal A} \}\big \} \Big ) W
$ \\
$=
\Big ( \Lambda Y.\rr{\TT{In}}_{ B'}\big\{ \rr{\TT{M}_{\at{\forall Y.B'}}} \{\langle\rr{\TT{El}}_{ B'}[\TT C'[u_{a}]Y]\rangle_{a\in\mathcal A} \}\big \} \Big ) W
\rightsquigarrow_{\beta}
 \rr{\TT{In}}_{ B}\big\{ \rr{\TT{M}_{\at{B}}} \{\langle \rr{\TT{El}}_{ B}[\TT C'[u_{a}]W]\rangle_{a\in\mathcal A} \}\big \} 
 $\\
 $
=
\rr{\TT{In}}_{ B}\big\{ \rr{\TT{M}_{\at{B}}} \{\langle \rr{\TT{El}}_{ B}[\TT C[u_{a}]] \rangle_{a\in\mathcal A}\}\big \} 
$
\end{tabular}
}

\item if $\TT C= \lambda w. \TT C'$, then $B=B_{1}\supset B_{2}$ and $\TT C': A\vdash^{\Gamma, w:B_{1}} B_{2}$, so by noticing that $\at{B}=\at{B_{2}}$ we have

  \medskip
  {\centering\small
  \begin{tabular}{c}
  $
\rr{\TT{In}}_{ B}\Big\{ \rr{\TT{M}_{\at{B}}} \{\langle\rr{\TT{El}}_{ B}[\TT C[u_{a}]] \rangle_{a\in\mathcal A}\}\Big \} 
=
\lambda w'.\rr{\TT{In}}_{ B_{2}}\big\{\rr{\TT{M}_{\at{B}}} \{\langle\rr{\TT{El}}_{ B_{2}}[(\lambda w.\TT C'[u_{a}])w'] \rangle_{a\in\mathcal A}\}\big \} 
$\\
$
\rightsquigarrow_{\beta}
\lambda w'.\rr{\TT{In}}_{ B_{2}}\Big\{ \rr{\TT{M}_{\at{B}}} \{\langle\rr{\TT{El}}_{ B_{2}}[ \TT C' \llbracket w'/w\rrbracket[u_{a}]] \rangle_{a\in\mathcal A}\}\Big\} 
\stackrel{\text{I.H.}}{\simeq_{\beta}}
\lambda w'.\rr{\TT C'}\llbracket w'/w\rrbracket\Big [ \rr{\TT{In}}_{A}\big\{\rr{\TT{M}_{\at{A}}} \{\langle\rr{\TT{El}}_{A}[u_{a}]\rangle_{a\in\mathcal A} \}\big \} \Big ]
$\\$
=
\TT C\Big[\rr{\TT{In}}_{A}\big\{ \rr{\TT{M}_{\at{A}}} \{\langle\rr{\TT{El}}_{A}[u_{a}]\rangle_{a\in\mathcal A} \}\big \} \Big ]
$
\end{tabular}
}
%

\item if $\TT C= \Lambda W. \TT C'$, then $B=\forall W.B'$ and $\TT C': A\vdash^{\Gamma} B'$, so by noticing that $\at{B}=\at{B'}$ we have

  \medskip
  {\centering\small
  \begin{tabular}{c}
  $
\rr{\TT{In}}_{ B}\Big\{ \rr{\TT{M}_{\at{B}}} \{\langle \rr{\TT{El}}_{ B}[\TT C[u_{a}]]\rangle_{a\in\mathcal A} \}\Big\} 
=
\Lambda W'.\rr{\TT{In}}_{ B'}\Big\{ \rr{\TT{M}_{\at{B}}} \{\langle \rr{\TT{El}}_{ B'}[(\Lambda W.\TT C'[u_{a}])W'] \rangle_{a\in\mathcal A}\}\Big \} 
$\\
$
\rightsquigarrow_{\beta}
\Lambda W'.\rr{\TT{In}}_{ B'}\Big\{ \rr{\TT{M}_{\at{B}}} \{\langle \rr{\TT{El}}_{ B'}[\TT C'\llbracket W'/W\rrbracket[u_{a}]]
\rangle_{a\in\mathcal A} \}\Big \} 
\stackrel{\text{I.H.}}{\simeq_{\beta}}
\Lambda W'.\rr{\TT C'}\llbracket W'/W\rrbracket\Big [ \rr{\TT{In}}_{A}\big\{ \rr{\TT{M}_{\at{A}}} \{\langle \rr{\TT{El}}_{A}[u_{a}]\rangle_{a\in\mathcal A} \}\big \} \Big ]
$\\ $
=
\TT C\Big [ \rr{\TT{In}}_{A}\big\{ \rr{\TT{M}_{\at{A}}} \{\langle\rr{\TT{El}}_{A}[u_{a}]\rangle_{a\in\mathcal A} \}\big \} \Big ]
$
\end{tabular}
}
%
\end{itemize}
\end{proof}

%
%
%
%
%
%
%
%

\begin{proposition}\label{ff-gamma-pres}For all $u,v$, if  $\Gamma\vdash_{\Np} u\rightsquigarrow_{\gamma}v:A$, then  
  $\Gamma^*\vdash_{\Nda} u^{\sharp}\simeq_{\beta} v^{\sharp}:A^*$.
\end{proposition}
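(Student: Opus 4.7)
The plan is to prove the result by induction on the derivation of $\Gamma\vdash_{\Np} u\rightsquigarrow_{\gamma}v:A$. The cases of reflexivity, transitivity, and the various congruence rules are routine: reflexivity gives $u^\sharp = v^\sharp$; transitivity chains two $\simeq_\beta$ instances; and each congruence case reduces to an application of the induction hypothesis inside the appropriate translated context (using Remark~\ref{remhelp} and the fact that the congruence rules of $\simeq_\beta$ can mirror any $\sharp$-translated constructor). The only substantive case is the $\dagger\gamma$-axiom itself, to which I turn now.

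Suppose $u = \rr{\TT E}[\bb{\delta_\dagger(t,\langle\langle y_j\rangle.s_k\rangle_{k\in\C K})}]$ and $v = \bb{\delta_\dagger(t,\langle\langle y_j\rangle.\rr{\TT E}[s_k]\rangle_{k\in\C K})}$ for some elimination context $\TT E$ with $\TT E: A_0 \vdash^\Gamma A$, where $A_0$ is the type of the inner $\delta_\dagger$-term. By Remark~\ref{remhelp}, $\TT E^\sharp$ is a principal context of $\Nda$ with $\TT E^\sharp: A_0^* \vdash^{\Gamma^*} A^*$, and $\TT E^\sharp[s_k^\sharp] = (\TT E[s_k])^\sharp$. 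Spelling out Definition~\ref{reftrans} and writing $Z_0 = \at{A_0^*}$ and $Z = \at{A^*}$, I compute:
\[
u^\sharp \ =\ \rr{\TT E}^\sharp\Bigl[\,\rr{\TT{In}}_{A_0^*}\bigl\{(t^\sharp Z_0)\,\langle\lambda\langle y_j\rangle.\rr{\TT{El}}_{A_0^*}[s_k^\sharp]\rangle_{k\in \C K}\bigr\}\Bigr],
\]
\[
v^\sharp \ =\ \rr{\TT{In}}_{A^*}\bigl\{(t^\sharp Z)\,\langle\lambda\langle y_j\rangle.\rr{\TT{El}}_{A^*}[\rr{\TT E}^\sharp[s_k^\sharp]]\rangle_{k\in\C K}\bigr\}.
\]
The desired $\beta$-equivalence $u^\sharp \simeq_\beta v^\sharp$ is now a direct instance of Lemma~\ref{lemmett1}, applied with the principal context $\rr{\TT C} := \rr{\TT E}^\sharp$, the formulas $A := A_0^*$ and $B := A^*$, the $\C K$-multicontext $\rr{\TT M} := (t^\sharp X)\,\langle\lambda\langle y_j\rangle.[\ ]_k\rangle_{k\in\C K}$ of type $X$ (so that $\rr{\TT M}_{Z_0}$ and $\rr{\TT M}_{Z}$ are the bodies appearing inside $\TT{In}_{A_0^*}\{\cdot\}$ and $\TT{In}_{A^*}\{\cdot\}$), and the terms $u_k := s_k^\sharp$ with $\Delta_k := \langle y_j:A_{f(j)}^*\rangle_{j\in g^{-1}(k)}$.

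The main technical point to check is that the capture-avoidance hypotheses of Lemma~\ref{lemmett1} are met: namely, that the $y_j$'s bound by $\rr{\TT M}$ do not occur in $\rr{\TT{El}}_{A_0^*}$, and that the variables bound by $\rr{\TT{In}}_{A_0^*}$ do not occur free in $\rr{\TT M}$, in $\rr{\TT E}^\sharp$, or in the $s_k^\sharp$. Both conditions can be arranged by $\alpha$-renaming the expansion contexts $\TT{El}_{A_0^*}, \TT{In}_{A_0^*}, \TT{El}_{A^*}, \TT{In}_{A^*}$ --- which is harmless thanks to Fact~\ref{alphaexpa} --- and by exploiting the usual proviso of the $\gamma$-rule that the variables $y_j$ discharged in the $\delta_\dagger$-term are not free in $\rr{\TT E}$. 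This bookkeeping is the most delicate part of the argument, but once carried out, the lemma applies and yields exactly $u^\sharp \simeq_\beta v^\sharp$, completing the induction.
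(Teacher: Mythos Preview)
Your proof is correct and follows essentially the same route as the paper's: induction on the derivation, routine structural cases, and in the $\dagger\gamma$ case an application of Lemma~\ref{lemmett1} with the principal context $\TT E^{\sharp}$ and the $\C K$-multicontext $(t^{\sharp}X)\langle \lambda\langle y_{j}\rangle.[\ ]_{k}\rangle_{k\in\C K}$. Your treatment of the capture-avoidance hypotheses (via Fact~\ref{alphaexpa} and the freshness proviso on the $y_{j}$) is in fact more explicit than the paper's, which simply asserts that ``the reader can check that all conditions of Lemma~\ref{lemmett1} do hold''.
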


\begin{proof}
  By induction on the typing derivation $\D D$ of $\Gamma\vdash_{\Np} u\rightsquigarrow_{\gamma}v:A$. If $\D D$ ends with an application of either reflexivity, transitivity or of one of the congruence rules, it is enough to apply the induction hypothesis to the derivations of the premises, and then applying the reflexivity, transitivity or  the  congruence rules.

  If $\D D$ ends with an application of $\dagger\gamma$, $ u=\rr{\TT E}[\bb{\delta_{\dagger}(t,\lost{\lost{y_j}{}{}.s_{k}}{}{})}]$ and $v= \bb{\delta_{\dagger}(t,\lost{\lost{y_j}{}{}.\rr{\TT E}[s_{k}]}{}{})}$ for some elimination context $\rr{\TT E}:B\vdash^{\Gamma} A$ (hence $\Gamma\vdash_{\Np} \bb{\delta_{\dagger}(t,\lost{\lost{y_j}{}{}.s_{k}}{}{})}: B$ and $\Gamma\vdash_{\Np} t:\dagger\lost{A_i}{}{}$ for some~$\dagger$).  We show how to construct a derivation $\D D'$ of 
  $\Gamma^*\vdash_{\Nda}   u^{\sharp} \simeq_{\beta} 
  v^{\sharp}:A^*$). 
  Let $Z=\at{A^{*}}$ and $Z'=\at{B^{*}}$.   By Remark~\ref{remhelp}
  we know that $\TT E^{\sharp}$ is a principal context and that $\TT E^{\sharp}\{t\}=\TT E^{\sharp}[t]$. Moreover, consider the $\mathcal K$-multicontext $\TT M: \langle\Delta_{k}\vdash X\rangle_{k\in \mathcal K } \Rightarrow( \Gamma\vdash X)$ 
given by 
$\TT M= (u^{\sharp}X) \big\langle  \lambda \langle y_{j}\rangle_{j\in g^{-1}(k)}. [\  ]_{k}\big \rangle_{k\in \mathcal K}$,
where $\Delta_{k}= \langle y_{j}: A_{f(j)}\rangle_{j\in g^{-1}(k)}$ and $X$ is a fresh variable. 
The reader can check that all conditions of Lemma~\ref{lemmett1} do hold with $\TT E^{\sharp}$ in place of $\TT C$, $A^{*},B^{*}$ in place of $A,B$ and the $s_{k}^{\sharp}$ in place of the $u_{a}$. Then we have:

  \medskip

  


  {\centering\small
\begin{tabular}{c} $\bb{ u^{\sharp}}
   \stackrel{\text{Def.~\ref{reftrans}}}{=}
   \bb{\rr{\TT E}^\sharp \left[ \rr{\TT{In}}_{B^*}  \left\{  (u^{\sharp}Z')  \lost{\lambda \lost{y_j}{j}{g^{-1}(k)}.\rr{\TT{El}}_{B^*} [s_{k}^{\sharp}]}{k}{\C K}\right \} \right]} 
$\\
$
\stackrel{\text{Lemma~\ref{lemmett1}}}{\simeq_{\beta}}
     \rr{\TT{In}}_{A^*} \left \{  (u^{\sharp}Z)
\lost{\lambda \lost{y_j}{j}{g^{-1}(k)}.\rr{\TT{El}}_{A^*} \left[\rr{\TT E}^\sharp[s_{k}^{\sharp}]\right]}{k}{\C K}  \right \}\stackrel{\text{Def.~\ref{reftrans}}}{=} \bb{v^{\sharp}}
$
\end{tabular}

}

%

\end{proof}

\begin{proposition}\label{ff-eta-pres}For all $u,v$ if $\Gamma\vdash_{\Np} u\rightsquigarrow_{\eta} v:A$ then $\Gamma^*\vdash_{\Nda} u^{\sharp}\rightsquigarrow_{\beta\eta} v^{\sharp}:A^*$.
\end{proposition}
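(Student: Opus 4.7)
The plan is to proceed by induction on the derivation of $\Gamma\vdash_{\Np} u\rightsquigarrow_{\eta} v:A$. The cases of reflexivity, transitivity and the various congruence rules will be dispatched by routine invocation of the induction hypothesis, lifted through the obvious compatibility of $^{\sharp}$ with the term-forming operations. The only substantive cases are the applications of the $\eta$-axioms themselves, which in $\Np$ amount to $\supset\eta$ and $\dagger\eta$, since $\Np$ lacks the universal quantifier (so that $\forall\eta$ never applies).

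The case $\supset\eta$ (i.e., $u \rightsquigarrow_{\eta} \lambda x.ux$ with $x\notin FV(u)$) is immediate: because $^{\sharp}$ agrees with the RP-translation on both $\lambda$-abstractions and applications, one has $(\lambda x.ux)^{\sharp} = \lambda x.u^{\sharp}x$, so that a single $\supset\eta$-step in $\Nda$ closes the case.

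The heart of the argument is the case $\dagger\eta$, in which $\delta_{\dagger}(t,\lost{\lost{y_j}{}{}.\iota_{\dagger}^{k}\lost{y_j}{}{}}{}{}) \rightsquigarrow_{\eta} t$ with $t:A = \dagger\lost{A_i}{}{}$. Unfolding Definition~\ref{reftrans} with $Z=\at{A^*}$, the left-hand side translates to
$$\rr{\TT{In}}_{A^*}\big\{(t^{\sharp}Z)\lost{\lambda\lost{y_j}{}{}.\rr{\TT{El}}_{A^*}[(\iota_{\dagger}^{k}\lost{y_j}{}{})^{\sharp}]}{k}{\C K}\big\}.$$
The strategy is to massage this expression, via a short chain of $\beta\eta$-reductions in $\Nda$, into the weak expansion $\rr{\TT{In}}_{A^*}\{\rr{\TT{El}}_{A^*}[t^{\sharp}]\}$ of $t^{\sharp}$, and then invoke the defining property of expansion pairs (Definition~\ref{ie-context}) to $\eta$-reduce to $t^{\sharp}$. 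Three local simplifications should suffice: first, unfold the translation of the injection, obtaining $(\iota_{\dagger}^{k}\lost{y_j}{}{})^{\sharp} = \Lambda X.\lambda\lost{z_{k'}}{}{}.z_{k}\lost{y_j}{}{}$ via the RP-clause; second, $\beta$-reduce the application of $\rr{\TT{El}}_{A^*}$, which specialises the outer $\Lambda X$ to $Z$ and consumes the dummy arguments $\lost{z_{k'}}{}{}$ against the fresh variables $\lost{x_{k'}}{}{}$ supplied by $\rr{\TT{El}}_{A^*}$, collapsing the body to $x_{k}\lost{y_j}{}{}$; third, $\eta$-reduce $\lambda\lost{y_j}{}{}.x_{k}\lost{y_j}{}{}$ to $x_{k}$. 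The resulting expression is literally $\rr{\TT{In}}_{A^*}\{(t^{\sharp}Z)\lost{x_{k}}{}{}\}$, which is precisely $\rr{\TT{In}}_{A^*}\{\rr{\TT{El}}_{A^*}[t^{\sharp}]\}$.

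The main obstacle I expect will be the careful bookkeeping of alpha-renaming. Three distinct families of variables---the original $y_j$ bound in the $\delta$-term, the dummy $z_{k'}$ introduced when translating the injections, and the function variables $x_{k'}$ built into $\rr{\TT{El}}_{A^*}$---come into close syntactic proximity, and the v.c.-substitution used by $\rr{\TT{In}}_{A^*}\{\cdot\}$ means one must verify that no unintended capture occurs during the $\beta\eta$-reductions; Fact~\ref{alphaexpa} and the freshness conventions on expansion pairs should guarantee this, but the verification is delicate. A minor point to check is that the sole $\forall$E instance introduced by the translation, namely $t^{\sharp}Z$, satisfies the atomicity restriction of $\Nda$; this is automatic, since $Z = \at{A^*}$ is a type variable by construction.
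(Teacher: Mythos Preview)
Your proposal is correct and follows essentially the same route as the paper's proof: induction on the derivation, with the routine cases handled by compatibility of $^{\sharp}$, and the $\dagger\eta$ case treated by unfolding the translation of $\delta_{\dagger}(t,\langle\langle y_j\rangle.\iota_{\dagger}^{k}\langle y_j\rangle\rangle)$, $\beta$-reducing $\rr{\TT{El}}_{A^*}$ applied to the translated injections, $\eta$-reducing the resulting $\lambda\langle y_j\rangle.z_{k}\langle y_j\rangle$, and then collapsing the weak expansion $\rr{\TT{In}}_{A^*}\{\rr{\TT{El}}_{A^*}[t^{\sharp}]\}$ to $t^{\sharp}$ via the defining property of expansion pairs. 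The only cosmetic difference is that the paper names the variables bound by $\rr{\TT{In}}_{A^*}$ as $z_h$ (and writes the injection body as $\rr{\TT C}\{x_k\langle y_j\rangle\}$ for an auxiliary context $\rr{\TT C}$), whereas you call them $x_{k'}$; the computation is the same.
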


\begin{proof}
  By induction on the typing derivation $\D D$ of $\Gamma\vdash_{\Np} u\rightsquigarrow_{\eta}v:A$. If $\D D$ ends with an application of either reflexivity, transitivity,  one of the congruence rules or $\imp\eta$  it is enough to apply the induction hypothesis to the derivations of the premises, and then applying reflexivity, transitivity, the  congruence rules and $\imp\eta$ or $\forall\eta$.

  If $\D D$ ends with an application of $\dagger\eta$,   $ u=\bb{\delta_{\dagger}(v,\lost{\lost{y_j}{}{}.\iota_{\dagger}^{k}\lost{y_j}{}{} }{}{})}$  (where 
  $\Gamma\vdash_{\Np} v:\dagger\lost{A_i}{}{}$ for some~$\dagger$), and we informally show how to construct a derivation $\D D'$ of $\Gamma^*\vdash_{\Nda}   u^{\sharp} \rightsquigarrow_{\beta\eta} 
  v^{\sharp}:A^*$. 
By letting $\rr{\TT C}= \Lambda X.\lambda \langle x_{h}\rangle_{h\in \C K}.[\ ]$ and 
$\rr{\TT{In}}_{A^*}$ be $\Lambda Z.\lambda \langle z_{h}\rangle_{h\in \C K}.[\ ]$, we have that:  
  

  \medskip
  {\centering\small
    $\bb{ u^{\sharp}} \stackrel{\text{Def.~\ref{reftrans}}}{=}
    \rr{\TT{In}}_{A^*} \left\{  (v^{\sharp}Z)
     \lost{\lambda \lost{y_j}{}{}.\rr{\TT{El}}_{A^*}\big [\rr{\TT C}\{x_{k}\lost{y_j}{}{}\} \big] }{}{}_{k\in \C K}
    \right\}   
\rightsquigarrow_{\beta}$ \\
\ \\
$\rightsquigarrow_{\beta}
    \rr{\TT{In}}_{A^*} \left\{  (v^{\sharp}Z)
     \lost{\lambda \lost{y_j}{}{}.z_{h}\lost{y_j}{}{}}{}{}_{h\in \C K}
    \right\}
    \rightsquigarrow_{\eta}
%
%
%
   \rr{\TT{In}}_{A^*} \left\{ (v^{\sharp}Z)
     \lost{z_{h} }{}{}_{h\in \C K}
   \right\}   \rightsquigarrow_{\eta}
  v^{\sharp}$

 

}

\end{proof}

\begin{proof}[Proof of Proposition \ref{ff-preservation2}]
It suffices to check rule-by-rule that reductions are preserved. The only non-trivial cases are those of  $\gamma$ and $\eta$, which can be treated using Proposition \ref{ff-gamma-pres} and Proposition \ref{ff-eta-pres}.
\end{proof}

\section{Proof of Proposition~\ref{rp-ffe-relation}}\label{rp-ffe-appendix}

We prove the following generalization of Proposition~\ref{rp-ffe-relation} to  $\Ndrp$:

\begin{proposition}\label{rp-ffe-relation-proof} For all $u$ such that  $\Gamma\vdash_{\Ndrp}u:A$  we have that   $\Gamma\vdash_{\Ndrp}u \simeq_{\eta\varepsilon} u^{\downarrow^{\varepsilon}}:A$.
\end{proposition}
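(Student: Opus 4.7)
\begin{proofof}{Proposition~\ref{rp-ffe-relation-proof} (plan)}
The plan is to proceed by induction on the typing derivation of $\Gamma \vdash_{\Ndrp} u : A$. For the base case ($u = x$) and for the cases in which $u$ is a $\lambda$-abstraction, a term application $ts$, a type-abstraction $\Lambda X.t$, or a type-application $tY$ with $Y$ atomic, Definition~\ref{ateps-lambda} replaces $u$ by the corresponding constructor applied to the $\varepsilon$-atomizations of its immediate subterms, so the claim follows immediately from the induction hypothesis and congruence.

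The only interesting case is $u = tB$ where $B$ is non-atomic, with $\Gamma \vdash_{\Ndrp} t : \forall X. F$ and $F = \EXPO{X}{k \in \C K}{\EXPO{X}{j \in g^{-1}(k)}{A_{f(j)}}}$ polynomial in $X$. Let $Z = \at{B}$. The key idea is that the target term prescribed by Definition~\ref{ateps-lambda} is obtained from $tB$ by a weak $\eta$-expansion on $B$ followed by a single $\varepsilon$-conversion pushing the elimination half of the weak expansion through the universal instantiation. Concretely, using Fact~\ref{etainel} applied to the type $F\llbracket B/X\rrbracket = \EXPO{B}{k \in \C K}{\EXPO{B}{j \in g^{-1}(k)}{A_{f(j)}}}$ of $tB$, and picking an expansion pair of the shape $\TT{El} = \TT{El}_B[[\,]\langle y_k \rangle_{k \in \C K}]$ and $\TT{In} = \lambda \langle y_k \rangle_{k \in \C K}. \TT{In}_B$ (as allowed by Fact~\ref{expa2} iterated on the $|\C K|$ outermost implications), I would obtain
\[
tB \;\simeq_{\eta}\; \lambda \langle y_k \rangle_{k \in \C K}.\, \TT{In}_B \bigl\{ \TT{El}_B [\, tB \langle y_k \rangle_{k \in \C K} \,] \bigr\}.
\]

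Next, since $\TT{El}_B$ is an elimination context of type $B \vdash^{\Gamma} Z$, I would apply the $\varepsilon$-rule of Appendix~\ref{epsilonappendix} with $\TT U = \TT{El}_B$, $C = B$, $D = Z$ and the family $u_k = y_k$ of type $(\EXPO{X}{j\in g^{-1}(k)}{A_{f(j)}})\llbracket B/X\rrbracket$, obtaining
\[
\TT{El}_B [\, tB \langle y_k \rangle_{k} \,] \;\simeq_{\varepsilon}\; tZ \Bigl\langle \Fun{X}{(\EXPO{X}{j\in g^{-1}(k)}{A_{f(j)}})}{\TT{El}_B}[y_k] \Bigr\rangle_{k \in \C K}.
\]
Combining the two equivalences via congruence (wrapping with $\TT{In}_B\{-\}$ and the $\lambda$'s, which by Remark~\ref{rem:epsicapturing} is harmless up to $\beta\eta$) yields exactly the right-hand side of Definition~\ref{ateps-lambda} with $t$ in place of $t^{\downarrow^{\varepsilon}}$. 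Finally the induction hypothesis on $t$ gives $t \simeq_{\eta\varepsilon} t^{\downarrow^{\varepsilon}}$, and a last congruence step concludes.

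The main technical obstacle will be the bookkeeping of variable capture: the variables $\langle y_k \rangle$ and any fresh variables occurring in $\TT{El}_B$, $\TT{In}_B$ must be chosen to satisfy the side-conditions of Fact~\ref{alphaexpa} and of the rule $\varepsilon$, and the substitution inside $\TT{In}_B\{\cdot\}$ is variable-capturing. This is addressed by invoking the v.c.\ variants of the $\varepsilon$-rule (Remark~\ref{rem:epsicapturing}) and of $\eta$ (Remark~\ref{rem:etacapturing}), which are derivable from $\beta\eta\varepsilon$ together with symmetry, so no new ingredient is needed beyond $\simeq_{\eta\varepsilon}$. Once this renaming discipline is in place, the equational chain above produces $(tB)^{\downarrow^{\varepsilon}}$ verbatim as defined in Definition~\ref{ateps-lambda}.
\end{proofof}
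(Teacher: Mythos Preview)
Your proposal is correct and follows essentially the same route as the paper's proof: induction on the typing derivation, with the only nontrivial case $u=tB$ handled by a weak $\eta$-expansion of $tB$ (via Fact~\ref{etainel} and the decomposition of the expansion pair of $F\llbracket B/X\rrbracket$ through Fact~\ref{expa2}), followed by one $\varepsilon$-step with $\TT U=\TT{El}_B$, and then the induction hypothesis on $t$. One minor remark: your detour through Remark~\ref{rem:epsicapturing} is not actually needed, since the $\varepsilon$-rule is applied to the term \emph{inside} the braces (where $\TT U=\TT{El}_B$ is an elimination context, so n.v.c.\ and v.c.\ substitution coincide), and the outer wrapping by $\TT{In}_B\{\cdot\}$ and $\lambda\langle y_k\rangle$ is handled by plain congruence; this keeps the argument strictly within $\simeq_{\eta\varepsilon}$ as stated, without any appeal to $\beta$.
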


\begin{proof}
  By induction on the typing derivation $\D D$ of $\Gamma\vdash_{\Ndrp} u:A$. If $\D D$ ends with an application of either Ax, $\supset$I, $\supset$E, $\forall$I or one of $\forall$E with an atomic witness, (i.e.~$u$ is  either a variable or of the form $\lambda x.t$, $ts$, $\Lambda X.t$ or $tY
    $), then it is enough to use reflexivity or to  apply the induction hypothesis to the immediate sub-derivations of $\D D$ and use the congruence rules.

    Otherwise $u=tB
    $  with $B$ non atomic, where the same remarks as in Def.~\ref{at-lambda} apply to $t$ (in particular $\Gamma\vdash_{\Ndrp}t:\forall W.F$), and we informally show  how to construct a derivation $\D D'$ of $\Gamma\vdash_{\Nda} u \simeq_{\eta\varepsilon}
  u^{\downarrow^{\varepsilon}}:A$.
%
%
Assuming  $Z=\at{B}$ (so that $\rr{\TT{El}}_{B}: B\vdash^{\Gamma}Z$) we have that: 

\medskip
{  \centering\small$\begin{array}{r@{}c@{}l@{}l}
       tB
       &\stackrel{{\text{Fact~\ref{etainel}}}}{\simeq_{\eta}} &\bb{\rr{\TT{In}}_{F\llbracket B/W\rrbracket}} \{ \bb{\rr{\TT{El}}_{F\llbracket B/W\rrbracket}} [tB
                                                                   ]\}
       
                                                                       \quad\stackrel{\text{Fact~\ref{expa2}}}{=}\quad \lambda\lost{y_k}{k}{\C K}.\bb{\rr{\TT{In}}_{B} } \left \{ \bb{\rr{\TT{El}}_{B} } [(tB)
                                                                       \lost{y_k}{k}{\C K}]\right\} &\\
       
 &      {\simeq_{\varepsilon}}&
                                   \lambda \lost{y_k}{k}{\C K}.\bb{\rr{\TT{In}}_{B} } \left \{ (tZ)
                                   \lost{\bb{\Fun{Z}{(\EXPO{Z}{j\in g^{-1}(k)  }{A_{f(j)}^{\phantom{fj}*}})}{\rr{\TT{El}}_{B} }}[y_k]}{k}{\C K}\right\} \quad\stackrel{\text{Def.~\ref{ateps-lambda}+I.H.}}{\simeq_{\eta\varepsilon}}\quad (tB)^{\downarrow^{\varepsilon}}&\\
     \end{array}
     $

}

\medskip
\noindent where in the penultimate step the rule $\varepsilon$ is applied to the term in braces by taking $\rr{\TT{El}}_{B}$ for $\TT U$. 
\end{proof}

\section{Proof of Proposition~\ref{ffe-esf-relation}}\label{espirito-appendix}
We prove the following generalization of Proposition~\ref{ffe-esf-relation} to  $\Np$:


\begin{proposition}For all $u$ such that $\Gamma\vdash_{\Np} u:A$:  (1) $\Gamma^*\vdash_{\Nda} u^{*\downarrow}\rightsquigarrow_{\beta} u^{*\downarrow^\varepsilon}:A^*$;\linebreak (2)  $\Gamma^*\vdash_{\Nda} u^{*\downarrow^\varepsilon}\rightsquigarrow_{\beta} u^{\sharp}:A^*$.
\end{proposition}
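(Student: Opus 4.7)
The plan is to prove both statements simultaneously by structural induction on $u$. In every case other than $u = \delta_{\dagger}(t,\langle\langle y_j\rangle.s_k\rangle)$, each of the three translations commutes with the relevant constructor: for variables, $\lambda$-abstraction, application, $\Lambda$-abstraction, and for $\forall$E applications produced at atomic witnesses, both $^{*\downarrow}$, $^{*\downarrow^\varepsilon}$ and $^{\sharp}$ dispatch identically to their restrictions on subterms, so the inductive step is just the induction hypothesis plus congruence of $\rightsquigarrow_\beta$.

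For point (1), the cleanest strategy is first to prove the auxiliary lemma that for every $\Ndrp$-derivation $v$ one has $v^{\downarrow} \rightsquigarrow_\beta v^{\downarrow^\varepsilon}$, and then apply it to $v = u^*$ (well-typed by Fact~\ref{rp-derpres}). The auxiliary lemma itself is proved by induction on $v$, with the only substantial case being $v = tB$ for $B$ non-atomic. Here one performs a sub-induction on the structure of $B$, showing that the step-by-step unrolling of the FF-atomization clauses of Definition~\ref{at-lambda} (one constructor of $B$ at a time, alternating between the $\imp$-case and the $\forall$-case) $\beta$-reduces to the single-shot packaged form used by $\varepsilon$-atomization. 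Concretely, the successive $\lambda$- and $\Lambda$-abstractions introduced by the unrolling assemble, by Fact~\ref{expa2}, into the weak-expansion context $\TT{In}_B$, while the innermost arguments $\lambda\langle z_j\rangle.\, y_k\langle z_j\rangle x_1 \ldots x_n$ built up by the recursive calls assemble into $\Fun{X}{\EXPO{X}{j\in g^{-1}(k)}{A_{f(j)}^{\phantom{fj}*}}}{\TT{El}_B}[y_k]$.

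For point (2), the interesting case is $u = \delta_{\dagger}(t,\langle\langle y_j\rangle.s_k\rangle)$, for which $u^* = t^* A^*\langle\lambda\langle y_j\rangle. s_k^*\rangle$. If $A$ is a propositional variable then $A^*$ is atomic, the $\varepsilon$-atomization leaves the $\forall$E untouched, and the result is immediate from the induction hypothesis on $t$ and the $s_k$. Otherwise, applying the $\varepsilon$-atomization clause with $Z = \at{A^*}$ yields
\[
u^{*\downarrow^\varepsilon}\;=\;\Bigl(\lambda\langle y_k\rangle.\TT{In}_{A^*}\bigl\{t^{*\downarrow^\varepsilon}Z\,\bigl\langle \Fun{X}{\EXPO{X}{j\in g^{-1}(k)}{A_{f(j)}^{\phantom{fj}*}}}{\TT{El}_{A^*}}[y_k]\bigr\rangle_k\bigr\}\Bigr)\langle\lambda\langle y_j\rangle. s_k^{*\downarrow^\varepsilon}\rangle_k.
\]
Firing the outer $\beta$-redex substitutes $\lambda\langle y_j\rangle. s_k^{*\downarrow^\varepsilon}$ for each $y_k$, and applying Lemma~\ref{lemmacexp} to each of the resulting $C$-expansions rewrites them to $\lambda\langle y_j\rangle.\TT{El}_{A^*}[s_k^{*\downarrow^\varepsilon}]$. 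The term obtained, namely $\TT{In}_{A^*}\{(t^{*\downarrow^\varepsilon}Z)\,\langle\lambda\langle y_j\rangle.\TT{El}_{A^*}[s_k^{*\downarrow^\varepsilon}]\rangle_k\}$, then $\beta$-reduces by the induction hypotheses on $t$ and the $s_k$ to $\TT{In}_{A^*}\{(t^{\sharp}Z)\,\langle\lambda\langle y_j\rangle.\TT{El}_{A^*}[s_k^{\sharp}]\rangle_k\} = u^{\sharp}$, matching Definition~\ref{reftrans} exactly.

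The main obstacle will be the sub-induction on $B$ in the auxiliary lemma for point~(1): while the phenomenon is conceptually transparent (the FF-clauses iteratively build up the same derivation that the $\varepsilon$-clause packages via $\TT{In}_B$ and $C$-expansion of $\TT{El}_B$), the bookkeeping of bound variables and the careful matching of each $\beta$-redex introduced by an unrolling step with the corresponding component of the packaged form is notationally heavy. Once this correspondence is verified, the rest of both proofs reduces to routine congruence arguments in the spirit of the other reduction-preservation results in the appendix.
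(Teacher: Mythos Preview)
Your proposal is correct and follows essentially the same route as the paper: point~(1) via an auxiliary lemma at the $\Ndrp$-level (Proposition~\ref{ff-ffe-relation-lambda}) proved by induction on the term with a sub-induction on $B$ in the $tB$ case, and point~(2) by direct induction on the $\Np$-term with the $\delta_{\dagger}$ case handled by firing the outer $\beta$-redex and simplifying the $C$-expansions (the paper unfolds Definition~\ref{cexp-lambda} by hand rather than invoking Lemma~\ref{lemmacexp}, but this is the same computation). One cosmetic slip: your list of ``trivial cases'' mentions $\Lambda$-abstraction and atomic $\forall$E, which do not occur in $\Np$-terms (they belong to the auxiliary lemma's $\Ndrp$ induction), and omits $\iota^k_{\dagger}$, which is however genuinely trivial since all three translations produce the same head $\Lambda X.\lambda\langle x_h\rangle.x_k\langle-\rangle$.
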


The proposition follows immediately from the following two:

\begin{proposition}\label{ff-ffe-relation-lambda}For all $u$  such that $\Gamma\vdash_{\Ndrp} u:A$ we have that:  $\Gamma\vdash_{\Nda} u^{\downarrow} \rightsquigarrow_{\beta}
  u^{\downarrow^{\varepsilon}}:A$.
  \end{proposition}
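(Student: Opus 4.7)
\medskip
\noindent\textbf{Proof plan.} The proof proceeds by induction on the typing derivation $\mathscr D$ of $\Gamma\vdash_{\Ndrp}u:A$. For all cases in which the last rule is Ax, $\supset$I, $\supset$E, $\forall$I, or $\forall$E with an atomic witness (i.e.\ $u=tY$), Definitions~\ref{at-lambda} and~\ref{ateps-lambda} agree syntactically, so the conclusion follows either by reflexivity or by applying the induction hypothesis to the immediate subterms and closing under the congruence rules. The single non-trivial case is $u=tB$ with $B$ non-atomic: here the FF-atomization peels off the outermost connective of $B$, introducing a single $\lambda$- or $\Lambda$-binder and recursing on the smaller formula, while the $\varepsilon$-atomization wraps the whole weak expansion pair $(\rr{\TT{In}}_B,\rr{\TT{El}}_B)$ around $t^{\downarrow^\varepsilon}Z$ in one shot. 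My plan is to handle this case by a sub-induction on the structure of $B$, showing that the nested $\beta$-redexes that appear in the FF-unfolding collapse precisely to the $\varepsilon$-form.

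For the sub-inductive step, take $B=C\imp D$ (the case $B=\forall W.D$ is analogous, using the $\forall$-clause of Fact~\ref{expa2}). By Fact~\ref{expa2} one may choose the fixed expansion pair so that $\rr{\TT{In}}_{C\imp D}=\lambda x.\rr{\TT{In}}_D$ and $\rr{\TT{El}}_{C\imp D}=\rr{\TT{El}}_D[[\ ]x]$, and a direct unfolding of Definition~\ref{cexp-lambda} (with $A=A_1\imp\dots\imp A_n\imp X=\EXPO{X}{j\in g^{-1}(k)}{A^{*}_{f(j)}}$) yields
\[
\Fun{X}{A}{\rr{\TT{El}}_{C\imp D}}[y_k]\;=\;\lambda z_1\dots z_n.\,\rr{\TT{El}}_D[y_k z_1\dots z_n x].
\]
On the FF side, the clause for $tB$ gives $\lambda\lost{y_k}{k}{\C K}.\lambda x.(tD)^{\downarrow}\lost{\lambda\lost{z_j}{j}{g^{-1}(k)}.y_k\lost{z_j}{j}{g^{-1}(k)}x}{k}{\C K}$. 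The sub-induction hypothesis supplies
$(tD)^{\downarrow}\rightsquigarrow_\beta(tD)^{\downarrow^\varepsilon}=\lambda\lost{y'_k}{k}{\C K}.\rr{\TT{In}}_D\{t^{\downarrow^\varepsilon}Z\lost{\Fun{X}{A}{\rr{\TT{El}}_D}[y'_k]}{k}{\C K}\}$. Firing the outer $\beta$-redexes substitutes $\lambda\lost{z_j}{j}{}.y_k\lost{z_j}{j}{}x$ for each $y'_k$, and firing the inner redexes inside $\Fun{X}{A}{\rr{\TT{El}}_D}[y'_k]$ rewrites each copy to $\lambda z_1\dots z_n.\rr{\TT{El}}_D[y_kz_1\dots z_nx]$, which by the computation above equals $\Fun{X}{A}{\rr{\TT{El}}_{C\imp D}}[y_k]$. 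Pulling $\lambda x$ inside $\rr{\TT{In}}_D$ using $\rr{\TT{In}}_{C\imp D}=\lambda x.\rr{\TT{In}}_D$ then recovers exactly $(tB)^{\downarrow^\varepsilon}$, so $(tB)^{\downarrow}\rightsquigarrow_\beta(tB)^{\downarrow^\varepsilon}$, as required. The main IH on the immediate subderivation for $t$ provides $t^{\downarrow}\rightsquigarrow_\beta t^{\downarrow^\varepsilon}$, which is passed through the congruence rules.

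\medskip
\noindent\textbf{Expected obstacle.} The only real difficulty is bookkeeping around capture-avoiding substitution: the variables $z_1,\dots,z_n$ introduced by the $A$-expansion, the argument variable $x$ coming from $\rr{\TT{In}}_{C\imp D}$, and the $y_k$ fed into $\Fun{X}{A}{\rr{\TT{El}}_{C\imp D}}$ must all be kept disjoint from each other and from the free variables of $\rr{\TT{In}}_D$ and $t^{\downarrow^\varepsilon}$. This can be arranged once and for all by choosing the fixed expansion pairs $(\rr{\TT{El}}_B,\rr{\TT{In}}_B)$ with sufficiently fresh bound names, invoking Fact~\ref{alphaexpa} to see that the choice does not affect the resulting contexts. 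Once this is dispatched, the residual equalities between the FF-output after $\beta$-normalization and the $\varepsilon$-output are syntactic identities, and no further conversion is needed.
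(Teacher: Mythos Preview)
Your proposal is correct and follows essentially the same route as the paper: a main induction on the typing derivation, with the only non-trivial case $u=tB$ (for $B$ non-atomic) handled by a sub-induction on $B$ that unfolds the FF-clause, invokes the sub-hypothesis to replace $(tD)^{\downarrow}$ by $(tD)^{\downarrow^{\varepsilon}}$, fires the resulting $\beta$-redexes, and then reassembles the $\varepsilon$-form via the decomposition $\rr{\TT{In}}_{C\imp D}=\lambda x.\rr{\TT{In}}_D$, $\rr{\TT{El}}_{C\imp D}=\rr{\TT{El}}_D[[\ ]x]$ (the paper does exactly this, appealing to Fact~\ref{alphaexpa} for the bookkeeping you flag as the expected obstacle). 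The only point worth tightening is that the main induction hypothesis $t^{\downarrow}\rightsquigarrow_\beta t^{\downarrow^{\varepsilon}}$ is what discharges the \emph{base case} of the sub-induction (when $D$ is atomic, so $(tD)^{\downarrow}=t^{\downarrow}D$ and $(tD)^{\downarrow^{\varepsilon}}=t^{\downarrow^{\varepsilon}}D$); your final sentence mentions it but does not locate it there.
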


  \begin{proof} By induction on the typing derivation $\D D$ of $\Gamma\vdash_{\Ndrp} u:A$. If $\D D$ ends with an application of either Ax, $\supset$I, $\supset$E, $\forall$I or one of $\forall$E with an atomic witness, (i.e.~$u$ is  either a variable or of the form $\lambda x.t$, $ts$, $\Lambda X.t$ or $tY
    $), then it is enough to use reflexivity or to  apply the induction hypothesis to the immediate sub-derivations of $\D D$ and use the congruence rules.

    Otherwise $u=tB
    $  with $B$ non atomic, where the same remarks as in Def.~\ref{at-lambda} apply to $t$, and we informally show  how to construct a derivation $\D D'$ of $\Gamma\vdash_{\Nda} u^{\downarrow} \rightsquigarrow_{\beta}
  u^{\downarrow^{\varepsilon}}:A$ by induction on $B$.

    If $B=C\supset D$,   assuming $Z=\at{B}$, we have: 

    \medskip
{  \small\centering
$\begin{array}{r@{}c@{}l}
   (tB)^\downarrow &\stackrel{\text{Def.~\ref{at-lambda}+I.H}}{\rightsquigarrow_{\beta}} &\lambda \lost{v_k}{k}{\C K}\lambda u. ((t D)^{\downarrow^{\varepsilon}}
                                                                                ) \lost{\lambda \lost{x_j}{j}{g^{-1}(k)}.v_k\lost{x_j}{j}{g^{-1}(k)}u}{k}{\C K}\\
                & \stackrel{\text{Def.~\ref{ateps-lambda}}}{=}& \lambda\lost{v_k}{k}{\C K}\lambda u. (\lambda \lost{y_k}{k}{\C K}. \bb{\rr{\TT{In}}}_{D}  \left \{(t^{\downarrow^{\varepsilon}} Z)
                                                                \lost{\Fun{Z}{(\EXPO{Z}{j\in g^{-1}(k)  }{A_{f(j)}^{\phantom{fj}*}})}{\bb{\rr{\TT{El}}}_{D} }[y_k]}{k}{\C K} \right\})
                                                                                                                                                                                                                   \lost{\lambda \lost{x_j}{j}{g^{-1}(k)}.v_k\lost{x_j}{j}{g^{-1}(k)}u}{k}{\C K}\\
             &\rightsquigarrow_{\beta}&\lambda\lost{v_k}{k}{\C K}\lambda u. \bb{\rr{\TT{In}}}_{D}  \left\{(t^{\downarrow^{\varepsilon}} Z)\lost{\Fun{Z}{(\EXPO{Z}{j\in g^{-1}(k)  }{A_{f(j)}^{\phantom{fj}*}})}{\bb{\rr{\TT{El}}}_{D}}[\lambda \lost{x_j}{j}{g^{-1}(k)}.v_k\lost{x_j}{j}{g^{-1}(k)}u]}{k}{\C K} \right\}\\
                &\stackrel{\text{Def}.~\ref{cexp-lambda}}{=}&\lambda\lost{v_k}{k}{\C K}\lambda u. \bb{\rr{\TT{In}}}_{D}  \left\{(t^{\downarrow^{\varepsilon}} Z
                                                       ) 
\lost{\lambda \lost{w_j}{j}{g^{-1}(k)}.\bb{\rr{\TT{El}}}_{D} [(\lambda \lost{x_{j}}{j}{g^{-1}(k)}.v_k\lost{x_{j}}{j}{g^{-1}(k)}u)\lost{w_j}{j}{g^{-1}(k)}]}{k}{\C K}\right\}\\
                &\rightsquigarrow_{\beta}&\lambda\lost{v_k}{k}{\C K}\lambda u. \bb{\rr{\TT{In}}}_{D}  \left \{(t^{\downarrow^{\varepsilon}} Z
                                                     ) \lost{\lambda \lost{w_j}{j}{g^{-1}(k)}.\bb{\rr{\TT{El}}}_{D} [v_k\lost{w_j}{j}{g^{-1}(k)}u]}{k}{\C K}\right\}\\
                &\stackrel{\text{Fact}~\ref{alphaexpa}}{=}&\lambda\lost{v_k}{k}{\C K}. \bb{\rr{\TT{In}}}_{C\imp D}  \left \{(t^{\downarrow^{\varepsilon}} Z
                                                      ) \lost{\lambda \lost{w_j}{j}{g^{-1}(k)}.\bb{\rr{\TT{El}}}_{C\imp D} [v_k\lost{w_j}{j}{g^{-1}(k)}]}{k}{\C K}\right\}
    \\
    
           &\stackrel{\text{Def}.~\ref{cexp-lambda}}{=}&\lambda\lost{v_k}{k}{\C K}. \bb{\rr{\TT{In}}}_{C\imp D}  \left \{(t^{\downarrow^{\varepsilon}} Z
) \lost{\Fun{Z}{(\EXPO{Z}{j\in g^{-1}(k)  }{A_{f(j)}^{\phantom{fj}*}})}{\bb{\rr{\TT{El}}}_{C\imp D} }[v_k]}{k}{\C K}\right\}\qquad\stackrel{\text{Def.~\ref{ateps-lambda}}}{=}\qquad (tB)^{\downarrow^{\varepsilon}}
  \end{array}
$

}

\medskip

One can argue in a similar way if $B=\forall U. D$ (just replace $C\imp D$ with $\forall U. D$ and $u$ with $U$).
\end{proof}

\begin{proposition}For all $u$ such that $\Gamma\vdash_{\Np} u:A$: $\Gamma^*\vdash_{\Nda} u^{*\downarrow^\varepsilon}\rightsquigarrow_{\beta} u^{\sharp}:A^*$.
\end{proposition}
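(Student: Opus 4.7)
My plan is to prove the statement by induction on the typing derivation of $\Gamma \vdash_{\Np} u : A$. For the cases where $u$ is a variable, an abstraction $\lambda x.t$, an application $ts$, a type abstraction $\Lambda X.t$, a type application $tB$, or an introduction term $\iota_{\dagger}^{k}\langle t_j\rangle$, the RP-translation followed by $\varepsilon$-atomization and the ESF-translation produce syntactically the same outer structure modulo the translations of the immediate subterms; hence it suffices to invoke the induction hypothesis on the subterms and close with reflexivity and the congruence rules of $\rightsquigarrow_{\beta}$. The only non-trivial case is when $u = \delta_{\dagger}(t, \langle\langle y_j\rangle.s_k\rangle_{k\in\C K})$ at type $A$, since the three predicative translations treat the $\dagger$-elimination differently.

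For this critical case, I first unfold the RP-translation using Definition~\ref{rp-lambda} to obtain $u^{*} = (t^{*}A^{*})\langle \lambda\langle y_j\rangle.s_k^{*}\rangle_{k\in\C K}$. I then apply the $\varepsilon$-atomization (Definition~\ref{ateps-lambda}) to the outermost type application $t^{*}A^{*}$, which, letting $Z = \at{A^{*}}$, rewrites it as
\[
\lambda\langle y_k\rangle_{k\in\C K}.\,\rr{\TT{In}}_{A^{*}}\Big\{(t^{*})^{\downarrow^{\varepsilon}} Z\,\big\langle \Fun{X}{(\EXPO{X}{j\in g^{-1}(k)}{A_{f(j)}^{*}})}{\rr{\TT{El}}_{A^{*}}}[y_k]\big\rangle_{k\in\C K}\Big\}.
\]
I apply the $\varepsilon$-atomization recursively to the remaining subterms, producing $(u^{*})^{\downarrow^{\varepsilon}}$, and then perform one outer $\beta$-reduction step to substitute the arguments $\lambda\langle y_j\rangle.(s_k^{*})^{\downarrow^{\varepsilon}}$ inside the braces.

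Next I invoke Lemma~\ref{lemmacexp} with $\TT U = \rr{\TT{El}}_{A^{*}}$ and $t = (s_k^{*})^{\downarrow^{\varepsilon}}$ to reduce each $\Fun{X}{(\EXPO{X}{j}{A_{f(j)}^{*}})}{\rr{\TT{El}}_{A^{*}}}[\lambda\langle y_j\rangle.(s_k^{*})^{\downarrow^{\varepsilon}}]$ to $\lambda\langle y_j\rangle.\rr{\TT{El}}_{A^{*}}[(s_k^{*})^{\downarrow^{\varepsilon}}]$ via further $\beta$-reductions. At this point the term has the shape $\rr{\TT{In}}_{A^{*}}\{(t^{*})^{\downarrow^{\varepsilon}} Z\,\langle\lambda\langle y_j\rangle.\rr{\TT{El}}_{A^{*}}[(s_k^{*})^{\downarrow^{\varepsilon}}]\rangle_{k\in\C K}\}$. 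Applying the induction hypothesis to $t^{*}$ and each $s_k^{*}$ (so that $(t^{*})^{\downarrow^{\varepsilon}} \rightsquigarrow_{\beta} t^{\sharp}$ and $(s_k^{*})^{\downarrow^{\varepsilon}} \rightsquigarrow_{\beta} s_k^{\sharp}$) and closing under congruence yields the term $\rr{\TT{In}}_{A^{*}}\{(t^{\sharp} Z)\,\langle\lambda\langle y_j\rangle.\rr{\TT{El}}_{A^{*}}[s_k^{\sharp}]\rangle_{k\in\C K}\}$, which is precisely $u^{\sharp}$ by Definition~\ref{reftrans}.

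The main obstacle I anticipate is the careful bookkeeping required to apply Lemma~\ref{lemmacexp}: one must verify that the free/bound variable conditions (notably that $\rr{\TT{El}}$ does not capture the bound $y_j$s) hold throughout, and that the $\varepsilon$-atomization recursion on the witnessing type $A^{*}$ interacts cleanly with the outer $\beta$-contraction. A secondary subtlety is that $u^{*}$ may apply $\varepsilon$-atomization to nested non-atomic witnesses occurring inside $t^{*}$ or the $s_k^{*}$, but these are handled uniformly by the induction hypothesis once the outermost layer has been reduced.
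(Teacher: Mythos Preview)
Your proof is correct and follows essentially the same route as the paper's: induction on the typing derivation, with the only substantive case being $u=\delta_{\dagger}(t,\langle\langle y_j\rangle.s_k\rangle)$, handled by unfolding $(t^{*}A^{*})^{\downarrow^{\varepsilon}}$ via Definition~\ref{ateps-lambda}, $\beta$-contracting the outer applications, simplifying the $A$-expansion (you cite Lemma~\ref{lemmacexp}; the paper unfolds Definition~\ref{cexp-lambda} and $\beta$-reduces directly, which is the same computation), and closing with the induction hypothesis. Two cosmetic points: the cases $\Lambda X.t$ and $tB$ do not arise since $\Np$ has no quantifiers, and you should single out the sub-case where $A$ is atomic (so that the special clause of Definition~\ref{ateps-lambda} does not fire and $(t^{*}Y)^{\downarrow^{\varepsilon}}=(t^{*})^{\downarrow^{\varepsilon}}Y$), as the paper does.
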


\begin{proof}
  By induction on the typing derivation $\D D$ of $\Gamma\vdash_{\Np} u:A$. If $\D D$ ends with an application of either Ax, $\supset$I, $\supset$E or  $\dagger$I$_k$
  , (i.e.~$u$ is  either a variable or of the form $\lambda x.t$, $ts$ or $\iota_{\dagger}^{k}\lost{t_j}{}{}$), then it is enough to use reflexivity or to apply the induction hypothesis to the immediate sub-derivations of $\D D$ and use the congruence rules.

  Otherwise $u=\delta_{\dagger}(t, \lost{\lost{y_j}{}{}.s_{k}}{}{})$ and hence (by Def.~\ref{rp-lambda}) $u^*=(t^{\rop}A^{\rop})\lost{\lambda \lost{y_j}{j}{g^{-1}(k)}.s_{k}^{\rop}}{k}{K}$, where the same remarks as in Def.~\ref{at-lambda} apply to $t^{\rop}$, and we informally show  how to construct a derivation $\D D'$ of $\Gamma\vdash_{\Nda}   u^{*\downarrow^{\varepsilon}} \rightsquigarrow_{\beta}
  u^{\sharp}:A$.

  If $A$ is atomic, i.e.~$A=Y$ we have that:

\medskip
{\centering\small
   $\begin{array}{c}\bb{ (\delta_{\dagger}
       (t, \lost{\lost{y_j}{}{}.s_{k}}{}{}))^{\rop\downarrow^{\varepsilon}}}  \stackrel{\text{Def.~\ref{rp-lambda}}}{=}\bb{ ( (t^{\rop}Y)
      \lost{\lambda \lost{y_j}{j}{g^{-1}(k)}.s_{k}^{\rop}}{k}{K}}{}{})^{\downarrow^{\varepsilon}} \stackrel{\text{Def.~\ref{ateps-lambda}}}{=}      \bb{  (t^{\rop\downarrow^{\varepsilon}}Y)
      \lost{\lambda \lost{y_j}{j}{g^{-1}(k)}.s_{k}^{\rop\downarrow^{\varepsilon}}}{k}{K}}{}{} \stackrel{\text{I.H.}}{\rightsquigarrow_{\beta}}   \\[1ex]
    \bb{  (t^{\sharp}Y)
      \lost{\lambda \lost{y_j}{j}{g^{-1}(k)}.s_{k}^{\sharp}}{k}{K}}{}{} \stackrel{\text{Def.~\ref{ie-context}}}{=} \bb{\rr{\TT{In}}_{Y} }\left \{\bb{  (t^{\sharp}Y)
      \lost{\lambda \lost{y_j}{j}{g^{-1}(k)}.\bb{\rr{\TT{El}}_{Y} }[s_{k}^{\sharp}]}{k}{K}}{}{}\right\}\stackrel{\text{Def.~\ref{reftrans}}}{=} u^{\sharp}\\[1ex]
    \end{array}$
    
  }

  \medskip

Otherwise, assuming $Z=\at{A^*}$ we have that: 

   \medskip
{\centering\small
   $\begin{array}{l}\bb{ (\delta_{\dagger}
       (t, \lost{\lost{y_j}{}{}.s_{k}}{}{}))^{\rop\downarrow^{\varepsilon}}}  \stackrel{\text{Def.~\ref{rp-lambda}}}{=}\bb{ ( (t^{\rop}A^{\rop
       })
       \lost{\lambda \lost{y_j}{j}{g^{-1}(k)}.s_{k}^{\rop}}{k}{K}}{}{})^{\downarrow^{\varepsilon}} \stackrel{\text{Def.~\ref{ateps-lambda}}}=\\[1ex]
       = \bb{\lambda \lost{z_k}{k}{\C K}. \bb{\rr{\TT{In}}_{A^*} }\left\{(t^{*\downarrow^{\varepsilon}}Z)
       \lost{\Fun{\!\!Z}{\left(\EXPO{Z}{j\in g^{-1}(k)  }{A_{f(j)}^{\phantom{fj}*}}\right)}{\bb{\rr{\TT{El}}_{A^*} }}[z_k]}{k}{\C K}\right\} \lost{\lambda \lost{y_j}{j}{g^{-1}(k)}.s_{k}^{\rop\downarrow^{\varepsilon}}}{k}{K}}\rightsquigarrow_{\beta}\\[2ex]
       \rightsquigarrow_{\beta} \bb{\rr{\TT{In}}_{A^*} \left\{(t^{*\downarrow^{\varepsilon}}Z)
       \lost{\Fun{\!\!Z}{\left(\EXPO{Z}{j\in g^{-1}(k)  }{A_{f(j)}^{\phantom{fj}*}}\right)}{\bb{\rr{\TT{El}}_{A^*} }}[\lambda \lost{y_j}{j}{g^{-1}(k)}.s_{k}^{\rop\downarrow^{\varepsilon}}]}{k}{\C K}\right\} }
    \stackrel{\text{Def.~\ref{cexp-lambda}}}{=}\\[2ex]
       = \bb{ \bb{\rr{\TT{In}}_{A^*} }\left\{(t^{*\downarrow^{\varepsilon}}Z)
       \lost{\lambda \lost{v_j}{j}{g^{-1}(k)}.\bb{\rr{\TT{El}}_{A^*} }[(\lambda \lost{y_j}{j}{g^{-1}(k)}.s_{k}^{\rop\downarrow^{\varepsilon}} )\lost{v_j}{j}{g^{-1}(k)}]}{k}{\C K}\right\} } \rightsquigarrow_{\beta}
 \\
       \rightsquigarrow_{\beta} \bb{ \bb{\rr{\TT{In}}_{A^*} }\left\{(t^{*\downarrow^{\varepsilon}}Z)
       \lost{\lambda \lost{y_j}{j}{g^{-1}(k)}.\bb{\rr{\TT{El}}_{A^*} }[s_{k}^{\rop\downarrow^{\varepsilon}}]}{k}{\C K}\right\} } \stackrel{\text{I.H.}}{\rightsquigarrow_\beta} u^{\sharp}
   \end{array}
  $

}

\end{proof}

\section{Proofs of Proposition \ref{rpequiv}, \ref{rpnequiv}, \ref{noio} and \ref{undefdis}}\label{proofprop}
We establish the following generalization of Proposition~\ref{rpequiv} to the whole of $\Ndp$.

\begin{proposition} For all $A\in \Ldp$, 
  $A\dashv\vdash_{\Ndp}A^*$.
\end{proposition}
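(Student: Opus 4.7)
The plan is to proceed by induction on the structure of the formula $A \in \Ldp$. The atomic case $A = X$ is immediate since $X^{*} = X$, and the cases $A = B \supset C$ and $A = \forall X. B$ follow routinely by applying the induction hypothesis under the appropriate introduction and elimination rules (using $\supset$I/E and $\forall$I/E, respectively), since the translation commutes with $\supset$ and $\forall$.

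The only genuinely new case is $A = \dagger^{(f,g)}\langle A_{i}\rangle_{i \in \C I}$, where
\[
A^{*} \;=\; \forall X.\,\langle \langle A_{f(j)}^{*}\rangle_{j \in g^{-1}(k)} \supset X\rangle_{k \in \C K} \supset X.
\]
For the direction $A \vdash_{\Ndp} A^{*}$, I would assume $\dagger\langle A_{i}\rangle$, assume moreover $y_{k} : \langle A_{f(j)}^{*}\rangle_{j \in g^{-1}(k)} \supset X$ for each $k \in \C K$, and apply $\dagger$E with these $|\C K|$ branches: in the $k$-th branch, the assumptions $\langle A_{f(j)}\rangle_{j \in g^{-1}(k)}$ can be converted into $\langle A_{f(j)}^{*}\rangle_{j \in g^{-1}(k)}$ one by one using the ``$\vdash$'' direction of the induction hypothesis for each $A_{f(j)}$, after which $y_{k}$ delivers $X$. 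Discharging the $y_{k}$'s by $\supset$I and abstracting $X$ by $\forall$I yields $A^{*}$.

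For the direction $A^{*} \vdash_{\Ndp} A$, I would instantiate $\forall X$ with the formula $\dagger\langle A_{i}\rangle$ itself via $\forall$E, leaving the task of discharging, for each $k \in \C K$, a premise of the form $\langle A_{f(j)}^{*}\rangle_{j \in g^{-1}(k)} \supset \dagger\langle A_{i}\rangle$. Each such premise is derived by assuming $\langle A_{f(j)}^{*}\rangle_{j \in g^{-1}(k)}$, converting each $A_{f(j)}^{*}$ into $A_{f(j)}$ using the ``$\dashv$'' direction of the induction hypothesis, and then applying $\dagger$I$_{k}$.

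The main obstacle is not conceptual but notational: keeping track of the iterated lists indexed by $\C K$ and by $g^{-1}(k)$, and of the simultaneous discharges, makes the derivation verbose in natural deduction notation. It is probably cleaner to give the explicit $\lambda$-terms witnessing the two derivations (along the lines of Appendix~\ref{rp-appendix}), namely
\[
u \;\mapsto\; \Lambda X.\,\lambda \langle y_{k}\rangle_{k \in \C K}.\,\delta_{\dagger}(u, \langle \langle x_{j}\rangle_{j}.\, y_{k}\langle \varphi_{f(j)}(x_{j})\rangle_{j}\rangle_{k})
\]
for the forward direction, and
\[
v \;\mapsto\; v\,(\dagger\langle A_{i}\rangle)\,\bigl\langle \lambda \langle x_{j}\rangle_{j \in g^{-1}(k)}.\,\iota_{\dagger}^{k}\langle \psi_{f(j)}(x_{j})\rangle_{j}\bigr\rangle_{k \in \C K}
\]
for the backward direction, where $\varphi_{B} : B \to B^{*}$ and $\psi_{B} : B^{*} \to B$ are the pair of terms produced by the induction hypothesis on each subformula $B$.
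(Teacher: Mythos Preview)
Your proposal is correct and follows essentially the same approach as the paper: induction on $A$, with the only nontrivial case $A=\dagger\langle A_i\rangle$ handled by the two $\lambda$-terms you wrote, which match the paper's contexts $\TT D$ and $\TT C$ respectively (the paper's $\psi_{f(j)}$ and $\varphi_{f(j)}$ are called $\TT C_j$ and $\TT D_j$). The only cosmetic difference is that in the forward direction the paper places the abstractions $\lambda\langle x_h\rangle_{h\in\C K}$ \emph{inside} each branch of $\delta_\dagger$ rather than outside it as you do, but both orderings yield well-typed derivations of $A^*$ from $A$.
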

\begin{proof}
We can construct by induction on $A$ term contexts $\rr{\TT C}:  A^* \vdash^{\emptyset}A$ and  $\rr{\TT D}: A \vdash^{\emptyset}A^*$. The only non trivial case is $A= \dagger\langle A_i\rangle$. In this case we have that:

{\small
$$\rr{\TT C}= [\ ]\dagger\langle A_i\rangle\lost{\lambda \lost{y_j}{j}{g^-1(k)}. \iota^k_{\dagger}\lost{\rr{\TT C}_j[y_j]}{j}{g^-1(k)}   }{k}{\C K}_{k\in \C K}$$
$$\rr{\TT D}= \Lambda X.\delta_{\dagger}([\ ], \lost{\lost{y_j}{j}{g^-1(k)}. \lambda\lust{x_h}{h}{\C K}. x_k \lost{\rr{\TT D}_j[y_j]}{j}{g^-1(k)}   }{k}{\C K}) $$
}

\noindent
where by induction hypothesis $\rr{\TT C}_j:A_{f(j)}^{\phantom{f}*}\vdash^{\emptyset}A_{f(j)}$ and $\rr{\TT D}_j:A_{f(j)}\vdash^{\emptyset}A_{f(j)}^{\phantom{f}*}$



\end{proof}

To establish Proposition \ref{rpnequiv} we will exploit a sound and complete semantics for $\Ndva$ from \cite{Sol77}, that we briefly recall.

For any partially ordered set $\langle W, \leq\rangle$, let $ W^{\uparrow}=\{ a\subseteq W\mid \forall \alpha, \beta\in W (\alpha\in a \land \alpha\leq \beta \Rightarrow \beta\in a)\}$ the set of upward closed subsets of $W$.

A \emph{$\Ndva$-model} is a tuple $\mathcal M=\langle W, \leq, \bot, D, g\rangle$ such that $\langle W, \leq, \bot\rangle$ is a partially ordered set with a bottom element $\bot$, $D$ is a monotone function from $W$ to $\wp(W^{\uparrow})$ (i.e. $\alpha\leq \beta\Rightarrow D(\alpha)\subseteq D(\beta)$) and $g: \mathcal V \to \bigcup_{\alpha\in W}D(\alpha)$.

For any formula $A\in \C L^{\vee}$ and model $\mathcal M=\langle W, \leq, \bot, D, g\rangle$, for all $\alpha\in W$ such that $g(\C V)\subseteq D(\alpha)$, we let the relation $ \alpha \Vdash_{\mathcal M} A$ be defined by:
\begin{itemize}
\item $\alpha\Vdash_{\mathcal M} X$ iff $\alpha\in g(X)$
\item  $\alpha\Vdash_{\mathcal M} A\supset B$ iff for all $\beta\geq \alpha$, if $\beta\Vdash_{\mathcal M} A$ then $\beta\Vdash_{\mathcal M} B$ 
\item  $\alpha\Vdash_{\mathcal M} A\vee B$ iff $\alpha\Vdash_{\mathcal M} A$ or  $\alpha\Vdash_{\mathcal M} B$
\item $\alpha\Vdash_{\mathcal M} \forall X. A$ iff  for all $\beta\geq \alpha$ and $a\in D(\beta)$, $\beta\Vdash_{\mathcal M[X\mapsto a  ]} A$\end{itemize}  
 where $\mathcal M[X\mapsto a]$ is $\langle W, \leq, \bot, D, g[X\mapsto a]\rangle$, with $g[X\mapsto a]$ being like $g$ except that it sends $X$ to $a$. 

A model $\mathcal M=\langle W, \leq, \bot, D, g\rangle$ 
 is called \emph{regular} if  $g(\mathcal V)\subseteq D(\bot)$. 
We  let $\Gamma \vDash  A$ if for any regular model $\mathcal M$, $\bot \Vdash_{\mathcal M}  \Gamma$ implies $ \bot\Vdash_{\mathcal M}  A$.

 \begin{proposition}[Soundness and completeness \cite{Sol77}]
$\Gamma\vdash_{\Ndva} A$ iff $\Gamma\vDash A$.

 \end{proposition}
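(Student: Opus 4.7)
The plan is the standard two-part argument: soundness by induction on derivations, completeness by a canonical (prime-theory) model construction augmented with a Henkin-style supply of fresh propositional variables that serves as witnesses for the atomic-only instantiation rule.

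For soundness, I would first prove the monotonicity lemma: if $\alpha\Vdash_{\mathcal M}A$ and $\alpha\leq\beta$ (with $g(\mathcal V)\subseteq D(\alpha)$) then $\beta\Vdash_{\mathcal M}A$. This goes by induction on $A$, where the $\supset$ and $\forall$ cases are immediate because their clauses quantify over all $\gamma\geq\alpha$, and the regularity/monotonicity of $D$ makes the domain of quantification for $\forall$ only grow as one moves up. Then soundness itself is by induction on the derivation of $\Gamma\vdash_{\Ndva}A$. The propositional cases are standard Kripke-style verifications. For $\forall\mathrm I$, from $\alpha\Vdash_{\mathcal M}\Gamma$ and the proviso $X\notin FV(\Gamma)$, one checks that for every $\beta\geq\alpha$ and $a\in D(\beta)$ one still has $\beta\Vdash_{\mathcal M[X\mapsto a]}\Gamma$ (because $X$ is not free in $\Gamma$), and applies the inductive hypothesis to obtain $\beta\Vdash_{\mathcal M[X\mapsto a]}A$. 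For $\forall\mathrm E_{at}$, given $\alpha\Vdash_{\mathcal M}\forall X.A$ and the witness atom $Y$, one instantiates the clause at $\alpha$ with the value $a:=g(Y)\in D(\alpha)$ (available by regularity $g(\mathcal V)\subseteq D(\bot)\subseteq D(\alpha)$) and then verifies by a routine substitution lemma that $\alpha\Vdash_{\mathcal M[X\mapsto g(Y)]}A$ coincides with $\alpha\Vdash_{\mathcal M}A\llbracket Y/X\rrbracket$.

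For completeness, I would argue contrapositively: if $\Gamma\nvdash_{\Ndva}A$, construct a regular countermodel. First extend the language with a countably infinite supply of fresh propositional variables not occurring in $\Gamma\cup\{A\}$, and check that conservativity holds (any derivation using the fresh variables can be specialized back). Then define the canonical model $\mathcal M^{c}=\langle W,\subseteq,\Gamma_0,D,g\rangle$ where $W$ is the collection of prime theories (sets of formulas closed under $\vdash_{\Ndva}$ and enjoying the disjunction property) in the extended language, $\Gamma_0$ is a prime theory extending $\Gamma$ with $A\notin\Gamma_0$ (obtained by a standard Lindenbaum construction that uses the disjunction property of $\Ndva$), $D(\alpha):=\{\,\llbracket Y\rrbracket_{\alpha}\mid Y\in\mathcal V\,\}$ where $\llbracket Y\rrbracket_{\alpha}=\{\beta\in W\mid \alpha\subseteq\beta, Y\in\beta\}$, and $g(X):=\llbracket X\rrbracket_{\Gamma_0}$. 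The core step is the truth lemma: $\alpha\Vdash_{\mathcal M^c}B$ iff $B\in\alpha$, proved by induction on $B$. The $\supset$ case needs the standard prime-extension argument; the $\vee$ case uses primality; the $\forall X.B$ case uses the atomic-instantiation rule together with the availability of fresh variables to witness failures.

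The main obstacle I expect is the $\forall$ case of the truth lemma, precisely because $\forall\mathrm E$ is restricted to atomic witnesses. The right-to-left direction (from $\forall X.B\in\alpha$ to $\alpha\Vdash\forall X.B$) is delicate: we need, for every $\beta\supseteq\alpha$ and every $a\in D(\beta)$, that $\beta\Vdash_{\mathcal M^c[X\mapsto a]}B$. Since every $a\in D(\beta)$ has the form $\llbracket Y\rrbracket_{\beta}$ for some atom $Y$, we can use $\forall\mathrm E_{at}$ to conclude $B\llbracket Y/X\rrbracket\in\beta$, and then appeal to the induction hypothesis (plus a substitution lemma relating $\llbracket\cdot\rrbracket_{\mathcal M^c[X\mapsto\llbracket Y\rrbracket_\beta]}B$ with $\llbracket\cdot\rrbracket_{\mathcal M^c}B\llbracket Y/X\rrbracket$). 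The converse direction (from $\forall X.B\notin\alpha$ to $\alpha\nVdash\forall X.B$) is where the fresh variables do essential work: pick a fresh $Y$ so that $B\llbracket Y/X\rrbracket\notin\alpha$ (this requires a Henkin-style saturation ensuring that if $\forall X.B$ is not derivable from $\alpha$ then neither is $B\llbracket Y/X\rrbracket$ for some fresh $Y$), extend $\alpha$ to a prime theory $\beta$ omitting $B\llbracket Y/X\rrbracket$, and use $\llbracket Y\rrbracket_{\beta}\in D(\beta)$ together with the induction hypothesis to witness the failure. Verifying these Henkin-style properties for $\Ndva$ (rather than full $\Nd$) is the technical heart of the argument, but it goes through because the $\forall\mathrm I$ rule still requires only that the eigenvariable not occur in the context, which is compatible with supplying fresh atoms as witnesses.
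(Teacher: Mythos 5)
The paper does not prove this proposition at all: it is stated with a citation to \cite{Sol77} and the surrounding appendix only \emph{uses} it, to build the countermodel for Proposition~\ref{rpnequiv}. So there is no in-paper argument to compare yours against; what can be assessed is whether your reconstruction is a viable proof of the imported result. In outline it is. Your soundness half correctly identifies the one point where the atomic restriction matters: $\forall$E$_{at}$ only requires the denotations of \emph{atoms} to lie in the domains $D(\alpha)$, which is exactly what regularity plus monotonicity of $D$ delivers, and exactly what fails for non-atomic witnesses (which is why these same models can refute unrestricted instantiation, as in the paper's proof of Proposition~\ref{rpnequiv}). Your completeness half follows the expected canonical-model route, and you correctly isolate the crux: the Henkin witnessing for $\forall$ must be interleaved with the Lindenbaum/primality construction (one cannot choose the fresh $Y$ after the prime theory is fixed, since a saturated theory may mention every variable), and it is available because $\forall$I only demands that the eigenvariable be absent from the finitely many premises actually used in a derivation.

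One concrete slip: the domain function $D(\alpha)=\{\llbracket Y\rrbracket_{\alpha}\mid Y\in\mathcal V\}$ with $\llbracket Y\rrbracket_{\alpha}=\{\beta\in W\mid \alpha\subseteq\beta,\ Y\in\beta\}$ is not monotone --- for $\alpha\subseteq\beta$ the set $\llbracket Y\rrbracket_{\alpha}$ need not coincide with any $\llbracket Y'\rrbracket_{\beta}$ --- so the structure you describe is not a $\Ndva$-model in the sense defined in the appendix. The repair is standard: take the unrelativized denotations $\llbracket Y\rrbracket=\{\beta\in W\mid Y\in\beta\}$ and let $D$ be the constant function with value $\{\llbracket Y\rrbracket\mid Y\in\mathcal V\}$; restrict $W$ to the prime theories containing $\Gamma_0$ so that $\Gamma_0$ is genuinely the bottom element, and regularity, monotonicity and the truth lemma then go through exactly as you describe. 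With that correction, and granting the (nontrivial but routine) verification of the interleaved primality-plus-witnessing construction you defer, your sketch is a correct proof of the result the paper takes on trust from Sobolev.
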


We will now exhibit a countermodel to 
$(\forall X. Y\curlyvee Z) \supset  Y\vee Z$.

%
\begin{proof}[Proof of Proposition~\ref{rpnequiv}]
Let $\C M=\langle W, \leq, \bot, D, g\rangle$ be the regular model given by: 
\begin{itemize}
\item $  W=\{\bot,\alpha,\beta\}$, $\leq$ is reflexive, 
  $\bot\leq \alpha,\beta$; 
\item  $D(\bot)=D(\alpha)=D(\beta)=\{ \{\alpha\}, \{\beta\}   \}$; 
  
\item $g(Y)=\{\alpha\}$, $g(Z)=\{\beta\}$ and $g(W)$ is chosen arbitrarily in $D(\bot)$ for $W\neq Y,Z$. 
\end{itemize}
  


We have $\alpha,\beta\Vdash_{\mathcal M}Y\vee Z$ but
$\bot\not\Vdash_{\mathcal M} Y\vee Z$.
Given $a\in D(\bot)=D(\alpha)=D(\beta)$, we have the following facts (where $\alpha \Vdash_\mathcal{M} A_1,\ldots, A_n$ is short for $\alpha \Vdash_\mathcal{M} A_1$ \emph{and}, \ldots, \emph{and} $\alpha \Vdash_\mathcal{M} A_1$, and similarly for $\nVdash$): 

\begin{itemize}
\item if $a=\{\alpha\}$, then %
\parbox[c]{.5\textwidth}{	\begin{itemize}
	\item $\alpha\Vdash_{\mathcal M[X\mapsto a] } Y\supset X, Z\supset X, X$ 
	\item $\beta\Vdash_{\mathcal M[X\mapsto a] } Y\supset X$ 
	\item $\beta\not\Vdash_{\mathcal M[X\mapsto a] }Z\supset X, X$
	\end{itemize}}
	
\item if $a=\{\beta\}$, then  %
\parbox[c]{.5\textwidth}{	
	\begin{itemize}
	\item $\alpha\Vdash_{\mathcal M[X\mapsto a] } Z\supset X, $ 
	\item $\alpha \not\Vdash_{\mathcal M[X\mapsto a] } Y\supset X, X$ 
	\item $\beta\Vdash_{\mathcal M[X\mapsto a] }Y\supset X, Z\supset X, X$
	\end{itemize}}

\end{itemize}
From these facts we deduce in turn:
\begin{itemize}
\item if $a=\{\alpha\}$, then 
	\begin{itemize}
	\item $\bot\Vdash_{\mathcal M[X\mapsto a] } Y\supset X$ (since $\bot \not \Vdash_{\mathcal M[X\mapsto a] }Y$, $\beta \not \Vdash_{\mathcal M[X\mapsto a] }Y$ and $\alpha \Vdash_{\mathcal M[X\mapsto a] } Y,X$)
	\item $\bot\not\Vdash_{\mathcal M[X\mapsto a] } Z\supset X, X$  (since $\beta\Vdash_{\mathcal M[X\mapsto a] }Z$ but $\beta \not \Vdash_{\mathcal M[X\mapsto a] } X$)
	\end{itemize}
	
\item if $a=\{\beta\}$, then 
	\begin{itemize}
	\item $\bot\Vdash_{\mathcal M[X\mapsto a] } Z\supset X$  (since $\bot \not \Vdash_{\mathcal M[X\mapsto a] }Z$, $\alpha \not \Vdash_{\mathcal M[X\mapsto a] }Z$ and $\beta \Vdash_{\mathcal M[X\mapsto a] } Z,X$)

	\item $\bot\not\Vdash_{\mathcal M[X\mapsto a] } Y\supset X, X$   (since $\alpha\Vdash_{\mathcal M[X\mapsto a] }Y$ but $\alpha \not \Vdash_{\mathcal M[X\mapsto a] } X$)

	\end{itemize}

\end{itemize}

We now deduce that 
\begin{itemize}
\item if $a=\{\alpha\}$, then $\bot\Vdash_{\mathcal M[X\mapsto a] } Y\curlyvee Z$, since the only $\gamma\geq \bot$ such that $\gamma \Vdash_{\mathcal M[X\mapsto a] } Y\supset X$ and for all $\gamma'\geq \gamma$, $\gamma' \Vdash_{\mathcal M[X\mapsto a] } Z\supset X$ is $\alpha$ and $\alpha  \Vdash_{\mathcal M[X\mapsto a] } X$;

\item if $a=\{\beta\}$, then $\bot\Vdash_{\mathcal M[X\mapsto a] } Y\curlyvee Z$, since the only $\gamma\geq \bot$ such that $\gamma \Vdash_{\mathcal M[X\mapsto a] } Y\supset X$ and for all $\gamma'\geq \gamma$, $\gamma' \Vdash_{\mathcal M[X\mapsto a] } Z\supset X$ is $\beta$ and $\beta  \Vdash_{\mathcal M[X\mapsto a] } X$.

\end{itemize}

We deduce then $\bot \Vdash_{\mathcal M} \forall X.Y\curlyvee Z$, and since $\bot \not \Vdash_{\mathcal M}Y\vee Z$, we conclude  
$\bot \not \Vdash_{\mathcal M}(\forall X.Y\curlyvee Z)\supset Y\vee Z$, hence by the definition of $\vDash$ we have that $\nvDash (\forall X.Y\curlyvee Z)\supset Y\vee Z$.
%

\end{proof}

\begin{proof}[Proof of Proposition~\ref{noio}]
Take $A= Y$, $B= Z$ and $C = Y\vee Z$. If there were an $\Ndva$-context $\rr{\TT{IO}}: \forall X. Y\curlyvee Z  \vdash^{\emptyset} (Y\curlyvee Z) \llbracket Y\vee Z/X\rrbracket$, then one could derive $Y\vee Z$ from $(Y\vee Z)^*$  by  replacing $[\ ]\dagger\langle A_i\rangle$ with $\rr{\TT{IO}}$  in the context $\rr{\TT{C}}$ of the proof of  proposition \ref{rpequiv}, thereby contradicting Proposition~\ref{rpnequiv}.
\end{proof}

\begin{proof}[Proof of Proposition~\ref{undefdis}]

%
For all $\Gamma \subset \Ld$ such that $\Gamma\vdash_{\Ndva} A\vee B$, either $\Gamma\vdash_{\Ndva}A$ or $\Gamma\vdash_{\Ndva }B$. This can be established following Prawitz' proof of the generalized disjunction property of $\NI$ (see Corollary 6 \cite[p.~56]{Prawitz1965}). We suppose (given the normalization theorem for $\Ndva$) the existence of a normal derivation of $\Gamma\vdash_{\Ndva} A\vee B$ and we reason as Prawitz. The only additional case to consider is that in which  $A\vee B$ is the conclusion of a (sequence of) applications of $\forall$E, but due to the atomic restriction this case can be treated as  the other cases of $\NI$.

%
Now suppose $\vee$ is strongly definable in $\Ndva$ and let $C\dashv\vdash_{\Nda}(
Y
 \vee
 Z
  )$. Then $Y\vdash_{\Ndva} C$, $Z\vdash_{\Ndva} C$ and 
 $C\vdash_{\Ndva} Y \vee Z$ hold; by the above we deduce then that either $C\vdash_{\Ndva} Y$ or  $C\vdash_{\Ndva} Z$ holds,  and so we conclude that either $Y\vdash_{\Ndva}  Z$ or $Z \vdash_{\Ndva} Y$, which is impossible.

%
%
%
%

\end{proof}

\end{document}